\newtheorem{theorem}{Theorem}
\newtheorem{corollary}{Corollary}
\newtheorem{lemma}{Lemma}
\newtheorem{remark}{Remark}
\newtheorem{proposition}{Proposition}
\newtheorem*{remark*}{Remark}
\title{Global Existence for Nonlocal Quasilinear Diffusion Systems \\in Non-Isotropic Non-Divergence Form}
\author{Catharine W.K. Lo\thanks{CMAFcIO -- Departamento de Matem\'atica, Faculdade de Ci\^encias, Universidade de Lisboa P-1749-016 Lisboa, Portugal\\ Email address: cwklo@fc.ul.pt} \, and Jos\'e Francisco Rodrigues\thanks{CMAFcIO -- Departamento de Matem\'atica, Faculdade de Ci\^encias, Universidade de Lisboa P-1749-016 Lisboa, Portugal\\ Email address: jfrodrigues@ciencias.ulisboa.pt}}
\date{}
\begin{document}
\maketitle

\begin{abstract}
    Consider the quasilinear diffusion problem \begin{equation*}\begin{cases}\bm{u}'+\Pi(t,x,\bm{u},\Sigma \bm{u})\mathbb{A}\bm{u}=\bm{f}(t,x,\bm{u},\Sigma \bm{u})&\text{ in }]0,T[\times\Omega,\\\bm{u}=\bm{0}&\text{ in }]0,T[\times\Omega^c,\\\bm{u}(0,\cdot)=\bm{u}_0(\cdot)&\text{ in }\Omega\end{cases}\end{equation*} for an open set $\Omega\subset\mathbb{R}^n$, $\bm{u}_0\in \mathbf{H}^s_0(\Omega):=[H^s_0(\Omega)]^m$ and any $T\in]0,\infty[$, where $\Sigma \bm{u}\in \mathbb{R}^q$ for $0<q\leq m\times n$ represents fractional or nonlocal derivatives with order $\sigma$ with $\sigma<2s$ for all $0<s\leq1$, including the classical gradient and derivatives of order greater than 1. We show global existence results for various quasilinear diffusion systems in non-divergence form, for different linear operators $\mathbb{A}$, including local elliptic systems, anisotropic fractional equations and systems, and anisotropic nonlocal operators, of the following type \[(\mathbb{A}\bm{u})^i=-\sum _{\alpha,\beta,j} \partial_\alpha(A^{\alpha\beta}_{ij}\partial_\beta u^j),\quad \mathbb{A}u=- D^s(A(x)D^su),\quad\text{ and }\quad (\mathbb{A}\bm{u})^i=\int_{\mathbb{R}^n}A_{ij}(x,y)\frac{u^j(x)-u^j(y)}{|x-y|^{n+2s}}\,dy,\] for coercive, invertible matrices $\Pi$ and suitable vectorial functions $\bm{f}$.
\end{abstract}

\section{Introduction}

Consider the quasilinear diffusion problem for $\bm{u}=(u^1,\dots,u^m)=\bm{u}(t,x)$ \begin{equation}\label{MainProblemGlobalQuasilinear}\begin{cases}\bm{u}'+\Pi(t,x,\bm{u},\Sigma \bm{u})\mathbb{A}\bm{u}=\bm{f}(t,x,\bm{u},\Sigma \bm{u})&\text{ in }]0,T[\times\Omega,\\\bm{u}=\bm{0}&\text{ in }]0,T[\times\Omega^c,\\\bm{u}(0,\cdot)=\bm{u}_0(\cdot)&\text{ in }\Omega\end{cases}\end{equation} for an open (bounded or unbounded) set $\Omega\subset\mathbb{R}^n$, $\bm{u}_0\in \mathbf{H}^s_0(\Omega):=[H^s_0(\Omega)]^m$ and any $T\in]0,\infty[$, where $\Sigma \bm{u}\in \mathbb{R}^q$ for $0<q\leq m\times n$ represents fractional or nonlocal derivatives in the form $D^\sigma \bm{u}$ or $\mathcal{D}^\sigma \bm{u}$ for $\sigma< 2s$, $0<s\leq1$, $\sigma$ possibly equal or greater than 1 including the classical gradient, $\mathbb{A}$ is symmetric time-independent local or nonlocal operator, which is bounded and $\mathbf{L}^2(\Omega)=[L^2(\Omega)]^m$-coercive, i.e. \begin{equation}\label{LEllipBdd}\begin{split}\langle\mathbb{A}\bm{u},\bm{v}\rangle\leq a^*\norm{\bm{u}}_{\mathbf{H}^s_0(\Omega)}\norm{\bm{v}}_{\mathbf{H}^s_0(\Omega)}\text{ for some }a^*>0,\,\forall \bm{u},\bm{v}\in \mathbf{H}^s_0(\Omega),\text{ and}\\\langle \mathbb{A}\bm{u},\bm{u}\rangle+\mu\norm{\bm{u}}_{\mathbf{L}^2(\Omega)}^2\geq a_*\norm{\bm{u}}_{\mathbf{H}^s_0(\Omega)}^2\text{ for some }\mu\geq0,\,a_*>0,\,\forall \bm{u}\in \mathbf{H}^s_0(\Omega),\end{split}\end{equation} for the classical Sobolev space $\mathbf{H}^s_0(\Omega), 0<s\leq1$, so that $\mathbb{A}:\mathbf{H}^s_0(\Omega)\to \mathbf{H}^{-s}(\Omega)$ is linear and continuous.
Suppose also that $\bm{f}:]0,T[\times\Omega\times\mathbb{R}^m\times\mathbb{R}^q\to\mathbb{R}^m$ is measurable such that it is continuous with respect to $\bm{u}$ and $\Sigma \bm{u}$ for almost every $(t,x)$ and satisfies a linear growth condition with respect to the last variable, and $\Pi:]0,T[\times\Omega\times\mathbb{R}^m\times\mathbb{R}^q\to\mathbb{R}^{m\times m}$ is a measurable, coercive, invertible matrix such that it is continuous with respect to $\bm{u}$ and $\Sigma \bm{u}$ for almost every $(t,x)$ and \begin{equation}\label{GammaNonDivCond}\underline{\gamma}|\xi|^2\leq\Pi\xi\cdot\xi\quad\text{ and }\quad\Pi\xi\cdot\xi^*\leq\bar{\gamma}|\xi||\xi^*|\quad\text{ for all }\xi,\xi^*\in\mathbb{R}^m\end{equation} for all $\bm{u}$ and $\Sigma\bm{u}$ and almost all $(t,x)$ with  $0<\underline{\gamma}\leq\bar{\gamma}$.

The main purpose of this work is to prove the existence of a solution $\bm{u}$ to Problem \eqref{MainProblemGlobalQuasilinear} in the space \[H^1(0,T;\mathbf{L}^2(\Omega))\cap L^2(0,T;\mathbf{L}^2_{\mathbb{A}})\cap C([0,T];\mathbf{H}^s_0(\Omega)).\] Here $\mathbf{L}^2_{\mathbb{A}}=D(\mathbb{A})$ is the domain of the operator $\mathbb{A}$, associated with homogeneous Dirichlet boundary condition when $\mathbb{A}u\in \mathbf{L}^2(\Omega)$, given by \[\mathbf{L}^2_{\mathbb{A}}=D(\mathbb{A}):=\{\bm{u}\in \mathbf{H}^s_0(\Omega):\mathbb{A}\bm{u}\in \mathbf{L}^2(\Omega)\},\] as $\mathbb{A}$ may be regarded as an operator in the classical framework $\mathbf{H}^\sigma_0(\Omega)\subset\mathbf{L}^2(\Omega)\subset\mathbf{H}^{-s}(\Omega)$.
Then, because the operator $\mathbb{A}$ is closed, the space $\mathbf{L}^2_{\mathbb{A}}$ is a Hilbert space when equipped with the graph norm for any $\Omega\subseteq\mathbb{R}^n$. Subsequently, the Bochner space $L^2(0,T;\mathbf{L}^2_{\mathbb{A}})$ is also a Hilbert space.

Note that here $\bm{f}_{\bm{u}}(t,x)=\bm{f}(t,x,\bm{u}(t,x),\Sigma\bm{u}(t,x))$ and $\Pi_{\bm{u}}(t,x)=\Pi(t,x,\bm{u}(t,x),\Sigma\bm{u}(t,x))$ are functions in $L^2(0,T;\mathbf{L}^2(\Omega))$ and $L^\infty(]0,T[\times\Omega)$ respectively.
Problem \ref{MainProblemGlobalQuasilinear} generalises the quasilinear equation defined with the classical gradient in \cite{ArendtChill} to systems of equations with more general derivatives. 

Following \cite{Comi1} and \cite{BellidoCuetoMoraCorral2020PiolaVector} (see also \cite{SS1+2}), for $0<s<1$, the Riesz \emph{fractional gradient} $D^s\bm{u}$ may be defined component-wise in integral form for vectors $\bm{u}=(u^1,u^2,\dots,u^m)\in\mathbf{H}^s_0(\Omega)$, respectively, by  \begin{equation}\label{FracDervs}D^s_iu^j(x):=c_{n,s}\int_{\mathbb{R}^n}\frac{u^j(x)-u^j(y)}{|x-y|^{n+s}}\frac{x_i-y_i}{|x-y|}\,dy,\qquad i=1,\dots,n,\quad j=1,\dots,m\end{equation} where $c_{n,s}=2^s\pi^{-\frac{n}{2}}\frac{\Pi\left(\frac{n+s+1}{2}\right)}{\Pi\left(\frac{1-s}{2}\right)}$ is given in terms of the Gamma-function, and $\bm{u}$ is extended by $\bm{0}$ outside $\Omega$, supposed to satisfy the extension property if $s>1/2$, so that we can assume that the extension of $\bm{u}$ is in $\mathbf{H}^s(\mathbb{R}^n)$ whenever $\bm{u}\in \mathbf{H}^s_0(\Omega)$. 
Similarly, 
the \emph{nonlocal gradient} $\mathcal{D}^s\bm{u}$ is defined component-wise, as in \cite{DuGunzburgerLehoucqZhouNonlocalVectorCalculus}, by \begin{equation}\label{NonlocalDervs}\mathcal{D}^su^j(x,y):=\frac{u^j(x)-u^j(y)}{|x-y|^{\frac{n}{2}+s}},\qquad j=1,\dots,m.\end{equation}
For $1<s<2$, we consider only the fractional gradients, defined by 
\begin{equation}\label{HigherOrderFracDervs}D^s_iu^j=D^{s-1}_i(\partial_i u^j),\end{equation} for the classical partial derivative $\partial_i=\frac{\partial}{\partial x_i}$. This is possible by making use of the semigroup property of the Riesz potentials and the property of the distributional Riesz fractional gradients which can be given through the convolution with it (Theorem 1.2 of \cite{SS1+2}(I)). Note that it is not possible to define a higher order nonlocal gradient, since $\mathcal{D}^su^j\not\in L^2(\mathbb{R}^n\times\mathbb{R}^n)$ for $s>1$.

$\mathbb{A}$ may be given by linear combinations of the classical gradient $\partial$, $D^s$ or $\mathcal{D}^s$, as long as it is bounded and $\mathbf{L}^2(\Omega)$-coercive, as in the following examples.  When $s=1$, this includes the local operator given by \begin{equation}\label{ClassicalDeriv}\langle\mathbb{A}\bm{u},\bm{v}\rangle=\langle\mathbb{L}\bm{u},\bm{v}\rangle=\sum _{\alpha,\beta,i,j} \int_\Omega A^{\alpha\beta}_{ij}\partial_\beta u^j\cdot \partial_\alpha v^i\end{equation} with a bounded, coercive tensor $A=(A^{\alpha\beta}_{ij}(x))$, symmetric in $\alpha$ and $\beta$, where $\langle\mathbb{A}\bm{u},\bm{v}\rangle$ is understood as the duality between $\mathbf{H}^{-1}(\Omega)$ and $\mathbf{H}^1_0(\Omega)$. 
The sum here is taken between 1 to $n$ for $\alpha$ and $\beta$, and between 1 to $m$ for $i$ and $j$. 
$\mathbb{A}$ may also be the anisotropic fractional operator \begin{equation}\label{FracOp}\langle\mathbb{A}\bm{u},\bm{v}\rangle=\langle\tilde{\mathcal{L}}^s_A\bm{u},\bm{v}\rangle=\sum_{\alpha,\beta,i,j}\int_{\mathbb{R}^n} A_{ij}^{\alpha\beta}D^s_\beta u^j\cdot D^s_\alpha v^i\end{equation} for $s\leq1$, where $D^s_\alpha$ coincides with $\partial_\alpha$ in the classical case of $s=1$, where $\langle\mathbb{A}\bm{u},\bm{v}\rangle$ is understood as the duality between $\mathbf{H}^{-s}(\Omega)$ and $\mathbf{H}^s_0(\Omega)$. 

We can also consider the anisotropic nonlocal operator $\mathbb{A}:\mathbf{H}^s_0(\Omega)\to\mathbf{H}^{-s}(\Omega)$ \begin{equation}\label{NonlocalOp}\mathbb{A}\bm{u}=\mathbb{L}^s_A\bm{u}=P.V.\int_{\mathbb{R}^n}A(x,y)\frac{\bm{u}(x)-\bm{u}(y)}{|x-y|^{n+2s}}\,dy\end{equation} defined for a symmetric, bounded, coercive matrix kernel $A=A_{ij}(x,y)$, i.e. for almost all $(x,y)$ in $\mathbb{R}^n\times\mathbb{R}^n$,
\[a_*|\xi|^2\leq A\xi\cdot\xi\leq a^*|\xi|^2\quad\text{ for all }\xi\in\mathbb{R}^m,\]
for $s<1$, so that, for all $\bm{u},\bm{v}\in \mathbf{H}^s_0(\Omega)$,
\[\langle\mathbb{L}^s_A\bm{u},\bm{v}\rangle=\int_{\mathbb{R}^n}\int_{\mathbb{R}^n}A(x,y)\frac{\bm{u}(x)-\bm{u}(y)}{|x-y|^{\frac{n}{2}+s}}\cdot\frac{\bm{v}(x)-\bm{v}(y)}{|x-y|^{\frac{n}{2}+s}}\,dy\,dx\leq a^*\norm{\bm{u}}_{\mathbf{H}^s_0(\Omega)}\norm{\bm{v}}_{\mathbf{H}^s_0(\Omega)},\] and
\[\langle \mathbb{L}^s_A\bm{u},\bm{u}\rangle\geq a_*\norm{\bm{u}}_{\mathbf{H}^s_0(\Omega)}^2.\]

The fractional Sobolev spaces $H^s(\mathbb{R}^n)$ for all real $s$ are defined by
\[H^s(\mathbb{R}^n)=\{u\in L^2(\mathbb{R}^n):\{\xi\mapsto(1+|\xi|^2)^{s/2}\hat{u}(\xi)\}\in L^2(\mathbb{R}^n)\},\] with norm \[\norm{u}_{H^s(\mathbb{R}^n)}=\norm{(1+|\xi|^2)^{s/2}\hat{u}}_{L^2(\mathbb{R}^n)},\] where $\hat{u}$ is the Fourier transform of $u$. For $0<s<1$, this norm is well known to be equivalent to \begin{equation}\label{HsGraphNorm}\norm{u}_{H^s(\mathbb{R}^n)}^2=\norm{u}_{L^2(\mathbb{R}^n)}^2+\int_{\mathbb{R}^n}\int_{\mathbb{R}^n}\frac{|u(x)-u(y)|^2}{|x-y|^{n+2s}}\,dx\,dy=:\norm{u}_{L^2(\mathbb{R}^n)}^2+[u]_{H^s(\mathbb{R}^n)}^2.\end{equation} 
Then, if $\Omega$ has Lipschitz boundary, hence satisfying the extension property,  $H^s(\Omega)$ coincides with the space of restrictions to $\Omega$ of the elements of $H^s(\mathbb{R}^n)$ as in \cite{LionsMagenesBook1} and \cite{Demengel}, with norm \begin{equation}\label{HsRestrictedNorm}\norm{u}_{H^s(\Omega)}=\inf_{U=u\text{ a.e.}\,\Omega\text{ }}\,\norm{U}_{H^s(\mathbb{R}^n)}.\end{equation} On the other hand, as it was shown in \cite{SS1+2} and \cite{FracObsRiesz}, the $H^s(\Omega)$-norm given by \eqref{HsGraphNorm} is in fact equal to \begin{equation}\label{HsEquivNorm}\norm{u}_{H^s(\Omega)}^2=\norm{u}_{L^2(\Omega)}^2+\frac{2}{c_{n,s}^2}\int_{\mathbb{R}^n}|D^su|^2=\norm{u}_{L^2(\Omega)}^2+\frac{2}{c_{n,s}^2}\norm{D^su}_{L^2(\mathbb{R}^n)}^2,\end{equation} where $u$ is extended by zero outside $\Omega$.

Here, the subspace $H^s_0(\Omega)$ is the usual Sobolev space, for $0<s\leq1$, given by the closure of $C_c^\infty(\Omega)$ in $H^s(\Omega)$ for general open sets $\Omega\subset\mathbb{R}^n$, as in \cite{LionsMagenesBook1}, and $H^{-s}(\Omega)$ its dual. Since $C_c^\infty(\Omega)$ is dense in $H^s(\Omega)$ if and only if $s\leq\frac{1}{2}$, in this case, $H^s_0(\Omega)=H^s(\Omega)$. Otherwise, if $s>\frac{1}{2}$, $H^s_0(\Omega)$ is strictly contained in $H^s(\Omega)$. On the other hand, as in \cite{Demengel}, for bounded sets with Lipschitz boundary, $\mathcal{O}\subset\mathbb{R}^n$, $C_c^\infty(\bar{\mathcal{O}})$ is dense in $H^s(\mathcal{O})$ for all $s\geq0$.

This can be further extended for $s>1$, by an abuse of notation, by defining $H^s_0(\Omega)$ to be the space \[H^s_0(\Omega):=\{u\in H^s(\mathbb{R}^n):\text{ supp }u\subset\bar{\Omega}\}.\]

Consider the \emph{maximal regularity space} \[MR:=H^1(0,T;\mathbf{L}^2(\Omega))\cap L^2(0,T;\mathbf{L}^2_{\mathbb{A}}),\] equipped with the norm for $0<s\leq1$ \begin{equation}\label{MRnorm}\norm{\bm{u}}_{MR}^2:=\int_0^T\norm{ \bm{u}'(t)}_{\mathbf{L}^2(\Omega)}^2+\int_0^T\norm{\bm{u}(t)}_{\mathbf{H}^s_0(\Omega)}^2+\int_0^T\norm{\mathbb{A}\bm{u}(t)}_{\mathbf{L}^2(\mathbb{R}^n)}^2,\end{equation} so that the linear inhomogeneous problem \[ \bm{u}'(t)+\mathbb{A}\bm{u}(t)=\bm{f}(t)\quad\text{ for a.e. }t\in ]0,T[,\quad \bm{u}(0)=\bm{0}\] is well-defined with a source term $\bm{f}\in L^2(0,T;\mathbf{L}^2(\Omega))$.

Classically, parabolic quasilinear systems in non-divergence form have frequently been considered (see \cite{Friedman1958ParabolicSystems}, \cite{Eidelman1964book} \cite{LadyzhenskayaSolonnikovUraltsevaBook}, \cite{IvockinaOskolkov1970ParaSystems}, \cite{AmannParabolicV1},  \cite{PucciSerrin1996AsymptoticsNonlinearParabolicSystems}, \cite{HallerDintelmannHeckHieber2006LpLqParaSystemsNonDivVMO}, \cite{ArkhipovaStara2017ParaSystems} and their references), with multiple physical, chemical and biological applications such as in reaction-diffusion systems (see, for example, \cite{Morgan1989GlobalExistenceSemilinearParabolicSystems}), phase-field models (see, for example, \cite{PhaseFieldFracture}) and population models (see, for instance, \cite{LeungNonlinearPDEBookPart2} and \cite{BendahmaneLanglais2010JEvolEqCrossDiffLinearizedEq1.10}). Parabolic equations have also been extended to the case of nonlocal reaction-diffusion (see, for instance, \cite{NonlocalDiffusionEg1} and \cite{BucurValdinociNonlocalDiffusionLectures}).

In Section \ref{sect:MRQuasiLinear}, we will first consider the linear problem, extending the approach of \cite{ArendtChill} to systems by introducing a suitable time-dependent matrix $\Upsilon$, thereby obtaining the solution to the non-autonomous linear problem given with the well-known maximal regularity and exemplifying with three linear systems of the above type, which may have additional regularity in bounded Lipschitz domains. Next, we will extend the result by a fixed point argument to obtain the result for the quasilinear problem. Limited by the currently known regularity of the Dirichlet problems associated to the operator $\mathbb{A}$, in Section \ref{sect:MRQuasiSigmaSmall}, we obtain the existence of a solution for the global quasilinear nondivergent systems for the general operators $\mathbb{A}$ satisfying \eqref{LEllipBdd}, first for $\sigma< s$  and also for particular operators satisfying additional regularity properties, up to and including $\sigma=s\leq1$. This extends the nonlocal vectorial problem with no source function considered in \cite{LaasriMugnolo2020MMAS}, as well as the vectorial semilinear case in \cite{Amann1985GlobalExistenceSemilinearParabolicSystems}, \cite{Morgan1989GlobalExistenceSemilinearParabolicSystems} and \cite{ArendtDierSystems}. This also generalises \cite{ArendtChill} to systems of the form \eqref{MainProblemGlobalQuasilinear} defined in a bounded or unbounded open set $\Omega\subset\mathbb{R}^n$, for more general derivatives that can take any positive order less than $s$, which is an improvement even in the classical case of $s=1$. 

This result is then further generalised to larger $s<\sigma<2s$ in the case of $\Omega$ bounded with Lipschitz boundary, making use of known regularity results for vectorial local and nonlocal operators in Section \ref{sect:MRQuasiSigmaBig}, in particular generalising to quasilinear diffusion systems the classical scalar Dirichlet case of \cite{ArendtChill}. As a result, we can also consider quasilinear diffusion equations and systems with derivatives of order $\sigma>s$ such that $\sigma$ may be greater than 1, generalising the results of \cite{ArendtChill}, \cite{ArendtDierSystems} and \cite{LaasriMugnolo2020MMAS}. These results may provide useful applications, particularly in population models and advection-diffusion systems, as we try to exemplify in model problems.

\section{A Non-autonomous Linear Problem}\label{sect:MRQuasiLinear}

We first consider the linear problem for a system, for $0<s\leq1$ up to and including the classical case of $s=1$, as in \cite{ArendtDierSystems} but for a different maximal regularity space, by extending the approach of \cite{ArendtChill} to systems by introducing a suitable matrix $\Upsilon$.

We first observe that, by the definition of $\mathbb{A}$ as a symmetric time-independent operator, for $\bm{u}\in MR$, $\mathbb{A}\bm{u}(t)\in \mathbf{L}^2(\Omega)$ for a.e. $t\in ]0,T[$, so we have the following well-known result (see, for instance, \cite{DautrayLionsVol5}, page 480), which we include here for completeness.

\begin{lemma}\label{ProductRuleTime} Let $\bm{u}\in MR$. Then $\int_\Omega\mathbb{A}\bm{u}(\cdot)\cdot\bm{u}(\cdot)\in W^{1,1}(0,T)$ and \[\frac{d}{dt}\int_\Omega\mathbb{A}\bm{u}(t)\cdot\bm{u}(t)=2\int_\Omega\mathbb{A}\bm{u}(t)\cdot \bm{u}'(t)\quad\text{ for a.e. }t\in]0,T[.\]
Furthermore, the continuous embedding holds \[MR\hookrightarrow C([0,T];\mathbf{H}^s_0(\Omega)).\]
\end{lemma}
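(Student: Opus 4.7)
The plan is to establish the identity by a standard mollification/approximation argument, exploiting the symmetry of $\mathbb{A}$, and then deduce the embedding from the coercivity assumption \eqref{LEllipBdd}.

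First I would reduce to the case of smooth-in-time functions. Given $\bm{u}\in MR$, I would extend $\bm{u}$ to a function $\tilde{\bm{u}}\in H^1(\mathbb{R};\mathbf{L}^2(\Omega))\cap L^2(\mathbb{R};\mathbf{L}^2_{\mathbb{A}})$ with compact support in time (for instance, by first reflecting across $0$ and $T$ and then multiplying by a smooth temporal cutoff). Since $\mathbb{A}$ is time-independent, it commutes with the extension and with temporal convolution. Letting $\rho_\varepsilon$ be a standard mollifier on $\mathbb{R}$, I would set $\bm{u}_\varepsilon:=\rho_\varepsilon*_t\tilde{\bm{u}}$. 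Then $\bm{u}_\varepsilon\in C^\infty(\mathbb{R};\mathbf{L}^2_{\mathbb{A}})$, and $\bm{u}_\varepsilon\to\bm{u}$ in $MR$, in particular $\bm{u}_\varepsilon'\to\bm{u}'$ in $L^2(0,T;\mathbf{L}^2(\Omega))$ and $\mathbb{A}\bm{u}_\varepsilon\to\mathbb{A}\bm{u}$ in $L^2(0,T;\mathbf{L}^2(\Omega))$.

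For such smooth approximants the identity holds pointwise in $t$: using that $\mathbb{A}$ is symmetric and $\mathbf{L}^2$-valued at each time, a direct differentiation and the product rule give
\[\frac{d}{dt}\int_\Omega\mathbb{A}\bm{u}_\varepsilon(t)\cdot\bm{u}_\varepsilon(t)=\int_\Omega\mathbb{A}\bm{u}_\varepsilon'(t)\cdot\bm{u}_\varepsilon(t)+\int_\Omega\mathbb{A}\bm{u}_\varepsilon(t)\cdot\bm{u}_\varepsilon'(t)=2\int_\Omega\mathbb{A}\bm{u}_\varepsilon(t)\cdot\bm{u}_\varepsilon'(t),\]
where in the last equality I used the symmetry of $\mathbb{A}$. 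The Cauchy--Schwarz inequality shows that both sides pass to the limit in $L^1(0,T)$, so $\int_\Omega\mathbb{A}\bm{u}(\cdot)\cdot\bm{u}(\cdot)$ has distributional derivative $2\int_\Omega\mathbb{A}\bm{u}(\cdot)\cdot\bm{u}'(\cdot)\in L^1(0,T)$, proving the $W^{1,1}$ claim.

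For the embedding $MR\hookrightarrow C([0,T];\mathbf{H}^s_0(\Omega))$, I would argue as follows. Since $MR\subset H^1(0,T;\mathbf{L}^2(\Omega))\hookrightarrow C([0,T];\mathbf{L}^2(\Omega))$, $\bm{u}$ is $\mathbf{L}^2$-continuous in $t$ and bounded in $\mathbf{H}^s_0(\Omega)$ (after modification on a null set), hence weakly continuous into $\mathbf{H}^s_0(\Omega)$. By coercivity \eqref{LEllipBdd},
\[a_*\norm{\bm{u}(t)}_{\mathbf{H}^s_0(\Omega)}^2\leq\int_\Omega\mathbb{A}\bm{u}(t)\cdot\bm{u}(t)+\mu\norm{\bm{u}(t)}_{\mathbf{L}^2(\Omega)}^2,\]
and the first step shows the right-hand side is in $W^{1,1}(0,T)$, hence continuous in $t$. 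Combined with the reverse bound from boundedness of $\mathbb{A}$, this gives continuity of $t\mapsto\norm{\bm{u}(t)}_{\mathbf{H}^s_0(\Omega)}$; together with weak continuity this yields strong continuity into $\mathbf{H}^s_0(\Omega)$. The norm estimate $\sup_{[0,T]}\norm{\bm{u}(t)}_{\mathbf{H}^s_0(\Omega)}\lesssim\norm{\bm{u}}_{MR}$ follows from integrating the identity $\tfrac{d}{dt}\int_\Omega\mathbb{A}\bm{u}\cdot\bm{u}=2\int_\Omega\mathbb{A}\bm{u}\cdot\bm{u}'$ in $t$ and applying Cauchy--Schwarz.

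The main potential obstacle is verifying that the extension step preserves both the $H^1$-in-time regularity and the $\mathbf{L}^2_{\mathbb{A}}$-valued integrability; this is routine once one notes that $\mathbf{L}^2_{\mathbb{A}}$ is a Hilbert space under the graph norm and that the boundary values $\bm{u}(0),\bm{u}(T)\in\mathbf{H}^s_0(\Omega)$ cause no issue for a symmetric reflection followed by a smooth cutoff.
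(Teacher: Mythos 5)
Your treatment of the $W^{1,1}$ identity is correct and is essentially the paper's argument (the paper simply invokes density of $C^1([0,T];\mathbf{L}^2_{\mathbb{A}})$ in $MR$ where you spell out the reflection--cutoff--mollification construction), and the closing sup-estimate is fine. The gap is in the embedding part, at the step ``combined with the reverse bound from boundedness of $\mathbb{A}$, this gives continuity of $t\mapsto\norm{\bm{u}(t)}_{\mathbf{H}^s_0(\Omega)}$''. What you actually have is the two-sided comparison
\[a_*\norm{\bm{u}(t)}_{\mathbf{H}^s_0(\Omega)}^2\ \leq\ \int_\Omega\mathbb{A}\bm{u}(t)\cdot\bm{u}(t)+\mu\norm{\bm{u}(t)}_{\mathbf{L}^2(\Omega)}^2\ \leq\ (a^*+\mu)\norm{\bm{u}(t)}_{\mathbf{H}^s_0(\Omega)}^2,\]
with a continuous middle term. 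This only sandwiches $\norm{\bm{u}(t)}_{\mathbf{H}^s_0(\Omega)}^2$ between two multiples of a continuous function; it does not force $t\mapsto\norm{\bm{u}(t)}_{\mathbf{H}^s_0(\Omega)}$ to be continuous (the norm may oscillate within the band the two inequalities allow). Hence the ``norm continuity $+$ weak continuity $\Rightarrow$ strong continuity'' conclusion is not justified as written: you have continuity of the wrong scalar function.

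The repair is to work with the quadratic form itself rather than the $\mathbf{H}^s_0$-norm. Since $\mathbb{A}$ is symmetric, bounded and coercive after the $\mu$-shift, $q(\bm{v}):=\langle\mathbb{A}\bm{v},\bm{v}\rangle+\mu\norm{\bm{v}}_{\mathbf{L}^2(\Omega)}^2$ is the square of a Hilbertian norm on $\mathbf{H}^s_0(\Omega)$ equivalent to the usual one; $t\mapsto q(\bm{u}(t))$ is continuous by your first step together with $MR\hookrightarrow C([0,T];\mathbf{L}^2(\Omega))$, and $\bm{u}$ is weakly continuous into $\mathbf{H}^s_0(\Omega)$, as you noted. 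Expanding in the $q$-inner product,
\[a_*\norm{\bm{u}(t)-\bm{u}(\tau)}_{\mathbf{H}^s_0(\Omega)}^2\leq q\bigl(\bm{u}(t)-\bm{u}(\tau)\bigr)=q(\bm{u}(t))+q(\bm{u}(\tau))-2\Bigl(\langle\mathbb{A}\bm{u}(t),\bm{u}(\tau)\rangle+\mu\int_\Omega\bm{u}(t)\cdot\bm{u}(\tau)\Bigr)\longrightarrow 0\]
as $\tau\to t$, the cross terms converging by weak continuity of $\bm{u}(\cdot)$ in $\mathbf{H}^s_0(\Omega)$ and continuity in $\mathbf{L}^2(\Omega)$. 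This is exactly the computation in the paper's proof (there written as $2\int_\Omega\mathbb{A}(\bm{u}(t)-\bm{u}(\tau))\cdot\bm{u}(t)+\int_\Omega[\mathbb{A}\bm{u}(\tau)\cdot\bm{u}(\tau)-\mathbb{A}\bm{u}(t)\cdot\bm{u}(t)]$ plus the $\mu$-term); with this correction your argument is complete.
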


\begin{proof} We first take $\bm{u}\in C^1([0,T];\mathbf{L}^2_{\mathbb{A}})$. Then, since $\mathbb{A}$ is symmetric and time-independent, we have
\begin{equation*}\int_\Omega\mathbb{A}\bm{u}(t)\cdot \bm{u}'(t)=\frac12\left(\int_\Omega\mathbb{A}\bm{u}(t)\cdot \bm{u}'(t)+\int_\Omega \bm{u}(t)\cdot \mathbb{A}\bm{u}'(t)\right)=\frac12\frac{d}{dt}\int_\Omega\mathbb{A}\bm{u}(t)\cdot\bm{u}(t).\end{equation*} Then, the result holds for arbitrary $\bm{u}\in MR$ by an approximation by density.

The second part of the lemma may be proved as in Proposition 3.6 of \cite{Dier2015JMAANonAutonomousMR}. Indeed, since $\int_\Omega\mathbb{A}\bm{u}(\cdot)\cdot\bm{u}(\cdot)\in W^{1,1}(0,T)\subset C([0,T])$, together with the continuous embedding $MR\hookrightarrow C([0,T];\mathbf{L}^2(\Omega))$ and the coercivity \eqref{LEllipBdd} yields $MR\subset L^\infty(0,T;\mathbf{H}^s_0(\Omega))$. Now, it is well-known that $C([0,T];\mathbf{L}^2(\Omega))\cap L^\infty(0,T;\mathbf{H}^s_0(\Omega))\subset C([0,T];\mathbf{H}^s_0(\Omega)\text{-weak})$ (see, for instance, Lemma 3.3 of \cite{Dier2015JMAANonAutonomousMR}). Then, as $\tau\to t$ for fixed $t$, \begin{align*}a_*\norm{\bm{u}(t)-\bm{u}(\tau)}_{\mathbf{H}^s_0(\Omega)}^2&\leq\int_\Omega\mathbb{A}(\bm{u}(t)-\bm{u}(\tau))\cdot(\bm{u}(t)-\bm{u}(\tau))+\mu\norm{\bm{u}(t)-\bm{u}(\tau)}_{\mathbf{L}^2(\Omega)}^2\\&=2\int_\Omega\mathbb{A}(\bm{u}(t)-\bm{u}(\tau))\cdot\bm{u}(t)+\int_\Omega\left[\mathbb{A}\bm{u}(\tau)\cdot\bm{u}(\tau)-\mathbb{A}\bm{u}(t)\cdot\bm{u}(t)\right]\\&\quad+\mu\norm{\bm{u}(t)-\bm{u}(\tau)}_{\mathbf{L}^2(\Omega)}^2.\end{align*} The three terms tend to 0: the first one by the weak continuity of $\bm{u}(\cdot)$ in $\mathbf{H}^s_0(\Omega)$, the second one by the continuity of the map $\tau\mapsto \int_\Omega \mathbb{A}\bm{u}(\tau)\cdot\bm{u}(\tau)$ 
and the third one again by the embedding $MR\hookrightarrow C([0,T];\mathbf{L}^2(\Omega))$.
\end{proof}

Recall, for instance from \cite{DautrayLionsVol5}, the following well-known maximal regularity result: for all $\bm{f}\in L^2(0,T;\mathbf{L}^2(\Omega))$, $\bm{u}_0\in \mathbf{H}^s_0(\Omega)$, there exists a unique solution to the autonomous problem \begin{align}&\bm{u}\in H^1(0,T;\mathbf{L}^2(\Omega))\cap L^2(0,T;\mathbf{L}^2_{\mathbb{A}}),\nonumber\\& \bm{u}'(t)+\mathbb{A}\bm{u}(t)=\bm{f}(t)\quad\text{ for a.e. }t\in]0,T[,\label{LinAutonomousCauchyProb}\\&\bm{u}(0)=\bm{u}_0.\nonumber\end{align} 

We consider now a linear non-autonomous problem, obtained by a multiplicative perturbation.
\begin{theorem}\label{LinNonautonomousProbThm}
Let $\Upsilon=\Upsilon(t,x):]0,T[\times\Omega\to\mathbb{R}^{m\times m}$ is a measurable, coercive, invertible matrix satisfying \eqref{GammaNonDivCond}. Then, for every $\bm{f}\in L^2(0,T;\mathbf{L}^2(\Omega))$, $\bm{u}_0\in \mathbf{H}^s_0(\Omega)$, there exists a unique solution of the problem \begin{align}&\bm{u}\in H^1(0,T;\mathbf{L}^2(\Omega))\cap L^2(0,T;\mathbf{L}^2_{\mathbb{A}})\cap C([0,T];\mathbf{H}^s_0(\Omega)),\nonumber\\& \bm{u}'(t)+\Upsilon(t,\cdot)\mathbb{A}\bm{u}(t)=\bm{f}(t)\quad\text{ for a.e. }t\in]0,T[,\label{LinNonautonomousCauchyProb}\\&\bm{u}(0)=\bm{u}_0.\nonumber\end{align} Moreover, there exists a constant $c=c(\underline{\gamma},\bar{\gamma},a_*,a^*,\mu,T)>0$ independent of $\bm{f}$ and $\bm{u}_0$ such that \begin{equation}\label{EstLinNonautCauchy}\norm{\bm{u}}_{MR}\leq c\left(\norm{\bm{f}}_{L^2(0,T;\mathbf{L}^2(\Omega))}+\norm{\bm{u}_0}_{\mathbf{H}^s_0(\Omega)}\right)\end{equation} for each solution $\bm{u}$ of \eqref{LinNonautonomousCauchyProb}
\end{theorem}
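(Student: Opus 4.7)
The plan is to use the \emph{method of continuity}, interpolating from the autonomous maximal regularity \eqref{LinAutonomousCauchyProb} (at $\lambda=0$) to the target problem (at $\lambda=1$) along the homotopy $\Upsilon_\lambda := (1-\lambda)I + \lambda\Upsilon$, $\lambda\in[0,1]$. A convex-combination computation shows that each $\Upsilon_\lambda$ still satisfies \eqref{GammaNonDivCond} with the \emph{uniform} constants $\underline{\gamma}_* := \min(\underline{\gamma},1)$ and $\bar{\gamma}^* := \max(\bar{\gamma},1)$; uniqueness will fall out of the a priori estimate applied to the homogeneous data $(\bm{f},\bm{u}_0)=(\bm{0},\bm{0})$.

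The central ingredient is a $\lambda$-uniform a priori $MR$-bound, which I would derive by testing any solution of the $\lambda$-problem pointwise in time with $\mathbb{A}\bm{u}(t)\in\mathbf{L}^2(\Omega)$. By Lemma \ref{ProductRuleTime} one has $\langle \bm{u}',\mathbb{A}\bm{u}\rangle = \tfrac{1}{2}\tfrac{d}{dt}\langle\mathbb{A}\bm{u},\bm{u}\rangle$, and the pointwise coercivity $\Upsilon_\lambda\xi\cdot\xi\geq\underline{\gamma}_*|\xi|^2$ gives $\langle\Upsilon_\lambda\mathbb{A}\bm{u},\mathbb{A}\bm{u}\rangle\geq\underline{\gamma}_*\|\mathbb{A}\bm{u}\|^2_{\mathbf{L}^2(\Omega)}$, so Young's inequality on the $\bm{f}$-term produces
\[
\frac{d}{dt}\langle\mathbb{A}\bm{u},\bm{u}\rangle + \underline{\gamma}_*\|\mathbb{A}\bm{u}\|^2_{\mathbf{L}^2(\Omega)} \leq \frac{1}{\underline{\gamma}_*}\|\bm{f}\|^2_{\mathbf{L}^2(\Omega)}.
\]
I would then integrate over $[0,t]$, use \eqref{LEllipBdd} to bound $\langle\mathbb{A}\bm{u},\bm{u}\rangle$ below by $a_*\|\bm{u}\|^2_{\mathbf{H}^s_0(\Omega)}-\mu\|\bm{u}\|^2_{\mathbf{L}^2(\Omega)}$, and absorb the $\mu$-correction via Gronwall on an auxiliary $L^2$-energy (obtained by testing the equation separately with $\bm{u}$ itself), producing uniform bounds on $\|\bm{u}\|_{L^\infty(0,T;\mathbf{H}^s_0(\Omega))}$ and $\|\mathbb{A}\bm{u}\|_{L^2(0,T;\mathbf{L}^2(\Omega))}$. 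The equation then yields $\|\bm{u}'\|_{\mathbf{L}^2(\Omega)}\leq\|\bm{f}\|_{\mathbf{L}^2(\Omega)}+\bar{\gamma}^*\|\mathbb{A}\bm{u}\|_{\mathbf{L}^2(\Omega)}$, completing an estimate of the form \eqref{EstLinNonautCauchy} with constant independent of $\lambda$.

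With this in hand, I would set $S := \{\lambda\in[0,1]:\text{ the }\lambda\text{-problem is uniquely solvable in }MR\text{ for every }(\bm{f},\bm{u}_0)\}$; then $0\in S$ by \eqref{LinAutonomousCauchyProb}. For \emph{openness}, if $\lambda_0\in S$ the operator $L_{\lambda_0}\bm{u}:=\bm{u}'+\Upsilon_{\lambda_0}\mathbb{A}\bm{u}$ is an isomorphism from $MR_0:=\{\bm{u}\in MR:\bm{u}(0)=\bm{0}\}$ onto $L^2(0,T;\mathbf{L}^2(\Omega))$ whose inverse norm is bounded by the a priori constant, and since $L_\lambda-L_{\lambda_0}=(\lambda-\lambda_0)(\Upsilon-I)\mathbb{A}$ has norm at most $|\lambda-\lambda_0|(1+\bar{\gamma})$, a Neumann-series argument places $\lambda$ in $S$ for $|\lambda-\lambda_0|$ small. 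For \emph{closedness}, along $\lambda_n\to\lambda_\infty$ in $S$ with solutions $\bm{u}_n$, the uniform $MR$-bound yields a weakly convergent subsequence $\bm{u}_n\rightharpoonup\bm{u}$ in $MR$; since $\Upsilon_{\lambda_n}\to\Upsilon_{\lambda_\infty}$ strongly in $L^\infty(]0,T[\times\Omega)$ while $\mathbb{A}\bm{u}_n\rightharpoonup\mathbb{A}\bm{u}$ weakly in $L^2(0,T;\mathbf{L}^2(\Omega))$, a standard weak--strong product argument passes to the limit in the equation, and the initial condition is preserved by the embedding $MR\hookrightarrow C([0,T];\mathbf{H}^s_0(\Omega))$ from Lemma \ref{ProductRuleTime}. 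Hence $S=[0,1]$ and $\lambda=1$ delivers the theorem.

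The hard part will be the a priori estimate itself: the coupling of the $\mathbb{A}\bm{u}$-test with the auxiliary $\bm{u}$-test and the Gronwall argument needed to absorb the $\mu$-shift in \eqref{LEllipBdd} with constants independent of $\lambda$ requires some care, but once this uniform bound is in hand the method of continuity is classical perturbation theory and the estimate \eqref{EstLinNonautCauchy} is a direct consequence of the uniform bound taken at $\lambda=1$.
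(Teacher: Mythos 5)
Your proposal is correct and follows the same overall skeleton as the paper — method of continuity along $\Upsilon_\lambda=(1-\lambda)\mathbb{I}+\lambda\Upsilon$ starting from the autonomous maximal regularity \eqref{LinAutonomousCauchyProb}, a $\lambda$-uniform a priori $MR$-estimate as the crux, Gronwall to absorb the $\mu$-shift in \eqref{LEllipBdd}, and recovery of $\|\mathbb{A}\bm{u}\|_{L^2(0,T;\mathbf{L}^2(\Omega))}$ and $\|\bm{u}'\|_{L^2(0,T;\mathbf{L}^2(\Omega))}$ from the equation — but it differs in two executional points. First, the multiplier: the paper tests the equation with $[\Upsilon_\lambda^*]^{-1}\bm{u}'(t)$, so the $\Upsilon_\lambda\mathbb{A}\bm{u}$ term collapses to $\mathbb{A}\bm{u}\cdot\bm{u}'$ (handled by Lemma \ref{ProductRuleTime}) and coercivity acts on $\bm{u}'$ through $[\Upsilon_\lambda^*]^{-1}$, whereas you test with $\mathbb{A}\bm{u}(t)$, so \eqref{GammaNonDivCond} acts directly and the estimate is carried by $\|\mathbb{A}\bm{u}\|_{\mathbf{L}^2(\Omega)}$ rather than $\|\bm{u}'\|_{\mathbf{L}^2(\Omega)}$; this is the multiplier the paper itself uses later in the proof of Theorem \ref{NonlinNonautonomousProbThmBigSigma}, it avoids having to invert the adjoint, and it closes in the same way since $\|\bm{u}'\|\leq\|\bm{f}\|+\bar{\gamma}^*\|\mathbb{A}\bm{u}\|$, with the $\mu\|\bm{u}(t)\|^2_{\mathbf{L}^2(\Omega)}$ term absorbed exactly as in \eqref{L2HsnormIneq} after choosing the Young parameter small enough that the resulting $\int_0^t\|\bm{u}'\|^2$ contribution is dominated by $\underline{\gamma}_*\int_0^t\|\mathbb{A}\bm{u}\|^2$. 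Second, you run the continuity method "by hand" (openness by Neumann series, closedness by weak compactness in $MR$) instead of invoking Theorem 5.2 of \cite{GilbargTrudingerBook}; this is equivalent, and indeed the closedness step is redundant once the uniform a priori bound gives a $\lambda$-independent perturbation radius. One small bookkeeping point: your openness argument is phrased for $L_\lambda:MR_0\to L^2(0,T;\mathbf{L}^2(\Omega))$ with zero initial datum, while the theorem concerns arbitrary $\bm{u}_0\in\mathbf{H}^s_0(\Omega)$; either incorporate the trace into the operator, i.e. work with $\bm{u}\mapsto(\bm{u}'+\Upsilon_\lambda\mathbb{A}\bm{u},\bm{u}(0))$ from $MR$ to $L^2(0,T;\mathbf{L}^2(\Omega))\times\mathbf{H}^s_0(\Omega)$ as the paper does (the perturbation only affects the first component, so your Neumann-series bound is unchanged), or lift $\bm{u}_0$ by solving the autonomous problem and reduce to zero data. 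With that adjustment the argument is complete, and the estimate \eqref{EstLinNonautCauchy} and uniqueness follow from your uniform bound as you indicate.
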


\begin{proof}
We use the method of continuity (c.f. Section 5.2 of \cite{GilbargTrudingerBook}) as in Theorem 3.2 of \cite{ArendtChill}. For every $\lambda\in[0,1]$, consider the matrix $\Upsilon_\lambda:=(1-\lambda)\mathbb{I}+\lambda \Upsilon$ for the identity matrix $\mathbb{I}$ of dimension $m\times m$ and the bounded operator \[B_\lambda:MR\to L^2(0,T;\mathbf{L}^2(\Omega))\times \mathbf{H}^s_0(\Omega)\] given by \[B_\lambda \bm{u}=\int_\Omega (\bm{u}'+\Upsilon_\lambda \mathbb{A}\bm{u})\cdot\bm{u}_0.\] Then $B:[0,1]\to\mathcal{L}(MR, L^2(0,T;\mathbf{L}^2(\Omega))\times \mathbf{H}^s_0(\Omega))$ (where $\mathcal{L}$ denotes the space of linear bounded operators) is continuous and $B_0$ is invertible by the maximal regularity result for the linear autonomous problem \eqref{LinAutonomousCauchyProb}. Therefore, by Theorem 5.2 of \cite{GilbargTrudingerBook}, it suffices to prove the a priori estimate \begin{equation}\label{EstLinNonautCauchy2}\norm{\bm{u}}_{MR}\leq\norm{B_\lambda \bm{u}}= c_1\left(\norm{ \bm{u}'+\Upsilon_\lambda \mathbb{A}\bm{u}}_{L^2(0,T;\mathbf{L}^2(\mathbb{R}^n))}+\norm{\bm{u}_0}_{\mathbf{H}^s_0(\Omega)}\right)\,\forall\lambda\in [0,1],\forall \bm{u}\in MR,\end{equation} for some constant $c_1=c_1(\underline{\gamma},\bar{\gamma},a_*,a^*,T)>0$, which gives \eqref{EstLinNonautCauchy} for $\lambda=1$.

Let $\lambda\in[0,1]$. Let $\bm{u}\in MR$ be such that \[ \bm{u}'+\Upsilon_\lambda\mathbb{A}\bm{u}=\bm{f}\quad\text{ and }\quad \bm{u}(0)=\bm{u}_0.\] Then, multiplying the equation by $[\Upsilon_\lambda^*]^{-1}\bm{u}'(t)$, where $[\Upsilon_\lambda^*]^{-1}$ is the inverse of the adjoint of $\Upsilon_\lambda$, we have, for almost every $t\in[0,T]$, \[\int_\Omega [\Upsilon_\lambda^*]^{-1} \bm{u}'(t)\cdot \bm{u}'(t)+\int_\Omega \Upsilon_\lambda\mathbb{A}\bm{u} \cdot[\Upsilon_\lambda^*]^{-1}\bm{u}'(t)=\int_\Omega \bm{f}\cdot [\Upsilon_\lambda^*]^{-1} \bm{u}'(t),\] which by Lemma \ref{ProductRuleTime} and the Cauchy-Schwarz inequality, gives \[\int_\Omega [\Upsilon_\lambda^*]^{-1} \bm{u}'(t)\cdot \bm{u}'(t)+\frac12\frac{d}{dt}\int_\Omega\mathbb{A}\bm{u} \cdot\bm{u}'(t)\leq\frac{\bar{\gamma}}{2}\int_\Omega\left[[\Upsilon_\lambda^*]^{-1}\bm{f}(t)\right]^2+\frac{1}{2\bar{\gamma}}\int_\Omega \left[\bm{u}'(t)\right]^2.\]
Integrating over time on $]0,t[$ for every finite $t\in]0,T[$ and using the estimate \eqref{GammaNonDivCond}, it follows by \eqref{LEllipBdd} that \begin{equation*}\frac{1}{2\bar{\gamma}}\int_0^t\norm{ \bm{u}'(\tau)}_{\mathbf{L}^2(\Omega)}^2\,d\tau+\frac{a_*}{2}\norm{\bm{u}(t)}_{\mathbf{H}^s_0(\Omega)}^2\leq \frac{a^*}{2}\norm{\bm{u}_0}_{\mathbf{H}^s_0(\Omega)}^2+\frac{\mu}{2}\norm{\bm{u}(t)}_{\mathbf{L}^2(\Omega)}^2+\frac{\bar{\gamma}}{2\underline{\gamma}^2}\int_0^t\norm{\bm{f}(\tau)}_{\mathbf{L}^2(\Omega)}^2\,d\tau,\end{equation*}
Observe that by the Cauchy-Schwarz inequality, \begin{align}\norm{\bm{u}(t)}_{\mathbf{L}^2(\Omega)}^2&=\norm{\bm{u}(0)}_{\mathbf{L}^2(\Omega)}^2+\int_0^t\frac{d}{d\tau}\norm{\bm{u}(\tau)}_{\mathbf{L}^2(\Omega)}^2\,d\tau\nonumber\\&=\norm{\bm{u}_0}_{\mathbf{L}^2(\Omega)}^2+2\int_0^t\int_\Omega\bm{u}(\tau)\cdot \bm{u}'(\tau)\,d\tau\nonumber\\&\leq\norm{\bm{u}_0}_{\mathbf{L}^2(\Omega)}^2+2\mu\bar{\gamma}\int_0^t\norm{\bm{u}(\tau)}_{\mathbf{L}^2(\Omega)}^2\,d\tau+\frac{1}{2\mu\bar{\gamma}}\int_0^t\norm{ \bm{u}'(\tau)}_{\mathbf{L}^2(\Omega)}^2\,d\tau\label{L2HsnormIneq},\end{align} 
and so we have
\begin{multline}\frac{1}{2\bar{\gamma}}\int_0^t\norm{ \bm{u}'(\tau)}_{\mathbf{L}^2(\Omega)}^2\,d\tau+a_*\norm{\bm{u}(t)}_{\mathbf{H}^s_0(\Omega)}^2\\\leq a^*\norm{\bm{u}_0}_{\mathbf{H}^s_0(\Omega)}^2+\mu\norm{\bm{u}_0}_{\mathbf{L}^2(\Omega)}^2+\frac{\bar{\gamma}}{\underline{\gamma}^2}\norm{\bm{f}}_{L^2(0,T;\mathbf{L}^2(\Omega))}^2+2\mu^2\bar{\gamma}\int_0^t\norm{\bm{u}(\tau)}_{\mathbf{L}^2(\Omega)}^2\,d\tau\\\leq a^*\norm{\bm{u}_0}_{\mathbf{H}^s_0(\Omega)}^2+\mu\norm{\bm{u}_0}_{\mathbf{L}^2(\Omega)}^2+\frac{\bar{\gamma}}{\underline{\gamma}^2}\norm{\bm{f}}_{L^2(0,T;\mathbf{L}^2(\Omega))}^2+2\mu^2\bar{\gamma}c_S\int_0^t\norm{\bm{u}(\tau)}_{\mathbf{H}^s_0(\Omega)}^2\,d\tau\label{EstLinNonautCauchy3}\end{multline} where $c_S$ is the constant for the Sobolev embedding $\mathbf{H}^s_0(\Omega)\hookrightarrow\mathbf{L}^2(\Omega)$.
Applying the integral form of Gronwall's lemma to the second term on the left-hand-side, there exists a constant $c_2=c_2(\underline{\gamma},\bar{\gamma},a_*,a^*,\mu,T)>0$ such that  \[\sup_{t\in[0,T[}\norm{\bm{u}(t)}_{\mathbf{H}^s_0(\Omega)}^2\leq c_2\left(\norm{\bm{u}_0}_{\mathbf{H}^s_0(\Omega)}^2+\norm{\bm{f}}_{L^2(0,T;\mathbf{L}^2(\Omega))}^2\right).\] Inserting this into \eqref{EstLinNonautCauchy3}, we obtain that \[\int_0^T\norm{ \bm{u}'(\tau)}_{\mathbf{L}^2(\Omega)}^2\,d\tau\leq c_3\left(\norm{\bm{u}_0}_{\mathbf{H}^s_0(\Omega)}^2+\norm{\bm{f}}_{L^2(0,T;\mathbf{L}^2(\Omega))}^2\right)\] for some constant $c_3=c_3(\underline{\gamma},\bar{\gamma},a_*,a^*,\mu,T)>0$. 

Finally, since \[\int_0^T\norm{\mathbb{A}\bm{u}(\tau)}_{\mathbf{L}^2(\mathbb{R}^n)}^2\,d\tau\leq \frac{1}{\underline{\gamma}^2}\int_0^T\norm{\Upsilon_\lambda\mathbb{A}\bm{u}(\tau)}_{\mathbf{L}^2(\mathbb{R}^n)}^2\,d\tau= \frac{1}{\underline{\gamma}^2}\int_0^T\norm{ \bm{u}'(\tau)-\bm{f}(\tau)}_{\mathbf{L}^2(\Omega)}^2\,d\tau ,\] the $MR$ norm of $\bm{u}$ can be estimated giving \eqref{EstLinNonautCauchy} and by Lemma \ref{ProductRuleTime}, the proof is complete.

\end{proof}

\begin{remark}
If $\mu=0$ in \eqref{LEllipBdd}, this theorem is a special case of Theorem 1.1 of \cite{Bardos1971JFAParabolicReg}.
\end{remark}

Next, we identify local and nonlocal vectorial operators to which we can apply Theorem \ref{LinNonautonomousProbThm}.
\\

\emph{Example 1: Local operators}

As a first example of $\mathbb{A}$, we consider the local operator $\mathbb{L}$, such that the $i$-th component of $\mathbb{L}\bm{u}$ is given by
\begin{equation}\tag{\ref{ClassicalDeriv}}(\mathbb{L}\bm{u})^i=-\sum_{\alpha,\beta,j}\partial_\alpha(A^{\alpha\beta}_{ij}\partial_\beta u^j)+b_{ij}u^j\end{equation} for $b_{ij}\in L^\infty(\Omega)$ and $A^{\alpha\beta}_{ij}\in L^\infty(\Omega)$. 
Then, the linear non-autonomous problem for  $\bm{f}\in L^2(0,T;\mathbf{L}^2(\Omega))$ and $\bm{u}_0\in\mathbf{H}^1_0(\Omega)$ given by
\[\bm{u}'(t)+\Upsilon(t,\cdot)\mathbb{L}\bm{u}(t)=\bm{f}(t),\quad\text{ for a.e. }t\in]0,T[\] has a solution $\bm{u}\in H^1(0,T;\mathbf{L}^2(\Omega))\cap L^2(0,T;\mathbf{L}^2_{\mathbb{L}})$. This extends the results of \cite{ArendtChill}.

Furthermore, we can explicitly write out the space $\mathbf{L}^2_{\mathbb{L}}$ for the following special case of $\mathbb{A}=\mathbb{L}$ using the following proposition:

\begin{proposition}[Theorem 4.9 of \cite{GiaquintaMartinazziEllipticSystemsBook2ndEd}]\label{ClassicalEllipRegThmVectorial}
Let $\Omega$ be an open domain in $\mathbb{R}^n$. Suppose in addition that $A^{\alpha\beta}_{ij}\in C^{0,1}_{loc}(\Omega)$ is continuous up to the boundary of $\bar{\Omega}$ 
and satisfies the Legendre-Hadamard condition \begin{equation}\label{LegendreHadamard}\sum _{\alpha,\beta,i,j}A^{\alpha\beta}_{ij}\nu_\alpha\nu_\beta\xi^i\xi^j\geq a_*|\xi|^2|\nu|^2\quad\forall\xi\in\mathbb{R}^m,\nu\in\mathbb{R}^n.\end{equation} Then, for all weak solutions $\bm{u}$ of the equation with $\bm{f}\in\mathbf{L}^2_{loc}(\Omega)$, \[\mathbb{L}\bm{u}=\bm{f}\quad\text{ in }\Omega,\] $\bm{u}\in \mathbf{H}^2_{loc}(\Omega)$.

If, in addition, by Theorem 6 of \cite{DongKim2011ARMALpParabolicEllipticSystemsBMO}, $\Omega$ is bounded with $C^{1,1}$ boundary and $A^{\alpha\beta}_{ij}\in C^{0,1}(\bar{\Omega})$, we can extend the result globally up to the boundary of $\Omega$ for the unique solution of the homogeneous Dirichlet problem for $\bm{f}\in \mathbf{L}^2(\Omega)$, so that the unique solution $\bm{u}\in \mathbf{H}^1_0(\Omega)$ lies in $\mathbf{H}^2(\Omega)$.
\end{proposition}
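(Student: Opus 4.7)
My plan is to prove the two parts (interior and boundary $H^2$ regularity) separately by Nirenberg's difference quotient method, the key technical input being that the Legendre--Hadamard condition \eqref{LegendreHadamard}, though weaker than pointwise positive definiteness of $(A^{\alpha\beta}_{ij})$ as an $nm\times nm$ matrix, still yields a Gårding inequality for the associated bilinear form on $\mathbf{H}^1_0$ once the coefficients are continuous.

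For the interior estimate, I would fix $\Omega'\Subset\Omega''\Subset\Omega$ and a cutoff $\eta\in C_c^\infty(\Omega'')$ with $\eta\equiv 1$ on $\Omega'$. Writing $\Delta^h_k \bm{v}(x):=h^{-1}(\bm{v}(x+he_k)-\bm{v}(x))$ for $|h|$ small and $k\in\{1,\dots,n\}$, I would insert the test function $\bm{\varphi}=-\Delta^{-h}_k(\eta^2 \Delta^h_k \bm{u})\in \mathbf{H}^1_0(\Omega)$ into the weak formulation $\langle \mathbb{L}\bm{u},\bm{\varphi}\rangle = \int_\Omega \bm{f}\cdot\bm{\varphi}$. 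Shifting one difference quotient to the other side of the integral and using the Leibniz formula for $\Delta^h_k$, the leading term becomes
\begin{equation*}
\sum_{\alpha,\beta,i,j}\int_\Omega A^{\alpha\beta}_{ij}(x+he_k)\,\partial_\beta(\Delta^h_k u^j)\,\partial_\alpha(\eta^2\Delta^h_k u^i),
\end{equation*}
plus commutator terms that involve $\Delta^h_k A^{\alpha\beta}_{ij}$ and are bounded thanks to $A^{\alpha\beta}_{ij}\in C^{0,1}_{loc}$. Freezing the coefficients at a point, the Legendre--Hadamard condition combined with the Plancherel identity on $\mathbb{R}^n$ (applied to $\eta\Delta^h_k \bm{u}$) yields a constant-coefficient Gårding inequality; a standard continuity/partition argument then gives
\begin{equation*}
a_*\int_\Omega\eta^2|\nabla \Delta^h_k \bm{u}|^2 \le C\int_\Omega\eta^2|\Delta^h_k \bm{u}|^2 + \Big|\sum_{\alpha,\beta,i,j}\int_\Omega \tilde A^{\alpha\beta}_{ij}\,\partial_\beta(\Delta^h_k u^j)\,\partial_\alpha(\eta^2\Delta^h_k u^i)\Big| + \text{l.o.t.}
\end{equation*}
Absorbing gradient terms supported where $\nabla\eta\neq 0$ via Young's inequality and using $\bm{f}\in \mathbf{L}^2_{loc}(\Omega)$, I obtain a bound on $\|\Delta^h_k\nabla\bm{u}\|_{L^2(\Omega')}$ uniform in $h$, which by the standard difference-quotient lemma gives $\bm{u}\in\mathbf{H}^2_{loc}(\Omega)$.

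For the global part, assume $\Omega$ is bounded with $C^{1,1}$ boundary and homogeneous Dirichlet data. Via a finite $C^{1,1}$ atlas and a subordinate partition of unity, it suffices to treat a boundary chart in which $\Omega$ becomes the half-ball $B_R^+$ with $\bm{u}=\bm{0}$ on $\{x_n=0\}$. The transformation preserves the structural hypotheses (Legendre--Hadamard with possibly reduced $a_*$ and $A^{\alpha\beta}_{ij}\in C^{0,1}$) and homogeneous Dirichlet condition. I would repeat the difference-quotient argument but only in the tangential directions $k=1,\dots,n-1$, where $\eta^2\Delta^h_k\bm{u}$ is still an admissible test function because it vanishes on the flat boundary; this produces all tangential second derivatives $\partial_\alpha\partial_k u^j\in L^2$ for $k<n$. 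The remaining second derivative $\partial_n^2 \bm{u}$ is then recovered algebraically by solving the equation $\mathbb{L}\bm{u}=\bm{f}$ for the coefficient of $\partial_n^2\bm{u}$: the Legendre--Hadamard condition applied to $\nu=e_n$ ensures that the matrix $(A^{nn}_{ij})_{i,j}$ is invertible with bound depending on $a_*$, so
\begin{equation*}
(A^{nn}_{ij})\partial_n^2\bm{u} = -\bm{f}-\sum_{(\alpha,\beta)\neq(n,n)}A^{\alpha\beta}_{ij}\partial_\alpha\partial_\beta u^j - (\partial_\alpha A^{\alpha\beta}_{ij})\partial_\beta u^j
\end{equation*}
gives $\partial_n^2\bm{u}\in\mathbf{L}^2$. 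The extension to $\mathbf{H}^2$ globally then follows, and matches the statement of Dong--Kim's $L^p$ result at $p=2$.

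The main obstacle, and the only point where systems genuinely differ from the scalar case, is the first step of the interior argument: one cannot simply test with $\bm{u}$ or $\Delta^h_k\bm{u}$ and invoke pointwise coercivity of $(A^{\alpha\beta}_{ij})$ because Legendre--Hadamard only controls rank-one tensors $\xi\otimes\nu$. Circumventing this requires the Gårding inequality, which in turn requires the Lipschitz regularity of the coefficients both to justify coefficient-freezing and to control the commutator terms produced by the difference quotient; without $A^{\alpha\beta}_{ij}\in C^{0,1}_{loc}$, even interior $H^2$ regularity is known to fail in general for Legendre--Hadamard systems.
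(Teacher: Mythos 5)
The paper gives no proof of this proposition: it is imported verbatim from the literature, the interior statement being Theorem 4.9 of \cite{GiaquintaMartinazziEllipticSystemsBook2ndEd} and the global statement Theorem 6 of \cite{DongKim2011ARMALpParabolicEllipticSystemsBMO}. Your sketch therefore supplies an argument the paper does not attempt, and for the interior part it is essentially the classical proof behind the first citation: deriving a G\r{a}rding inequality from Legendre--Hadamard via Plancherel and coefficient freezing, then running Nirenberg difference quotients with the test function $-\Delta^{-h}_k(\eta^2\Delta^h_k\bm{u})$, with the Lipschitz regularity of $A^{\alpha\beta}_{ij}$ used both for the commutator terms and the freezing; you correctly isolate the genuinely vectorial obstruction (LH only controls rank-one tensors $\xi\otimes\nu$, so pointwise coercivity is unavailable). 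For the boundary part you replace the $L^p$/BMO machinery of Dong--Kim by the elementary route: $C^{1,1}$ flattening (which indeed preserves LH with a smaller constant and keeps the coefficients Lipschitz), tangential difference quotients, and algebraic recovery of $\partial_n^2\bm{u}$ from the invertibility of $(A^{nn}_{ij})$, which LH with $\nu=e_n$ guarantees; at $p=2$ this gives the same $\mathbf{H}^2(\Omega)$ conclusion by more elementary means (to be fully rigorous, first conclude $\partial_n(A^{nn}_{ij}\partial_n u^j)\in L^2$ from the weak formulation and only then invert $A^{nn}$, since the non-divergence expansion uses the a.e.\ derivative of the Lipschitz coefficients). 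Two caveats: the operator $\mathbb{L}$ of the paper carries the zeroth-order term $b_{ij}u^j$ with $b\in L^\infty(\Omega)$, which you should move to the right-hand side before differencing (harmless, since $b\bm{u}\in\mathbf{L}^2_{loc}$); and your argument is purely a regularity statement for a given $\mathbf{H}^1_0$ weak solution, so the existence and uniqueness asserted in the second half of the proposition is not produced by the difference-quotient scheme --- under LH and G\r{a}rding alone it can in fact fail at an eigenvalue --- and must be imported from the cited theorem or stated as an additional hypothesis.
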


It is well-known (see for instance, Section 5 of \cite{Fichera1973}) to show using Fourier transform that the Legendre-Hadamard condition \eqref{LegendreHadamard} for tensors $A$ continuous up to the boundary of $\bar{\Omega}$ implies coercivity \eqref{LEllipBdd}, which we recall, is given by G\r{a}rding's inequality \begin{equation}\tag{\ref{LEllipBdd}}\langle \mathbb{L}\bm{u},\bm{u}\rangle+\mu\norm{\bm{u}}_{\mathbf{L}^2(\Omega)}^2\geq a_*\norm{\bm{u}}_{\mathbf{H}^1_0(\Omega)}^2\quad\forall \bm{u}\in \mathbf{H}^1_0(\Omega).\end{equation} (Recall also that this is not true if $\bm{u}$ does not have support in $\bar{\Omega}$.)
Therefore, as a corollary, we have
\begin{corollary}
Suppose $\mathbb{L}$ is of the form \eqref{ClassicalDeriv} such that $A^{\alpha\beta}_{ij}$ is locally Lipschitz and continuous up to the boundary of $\bar{\Omega}$ satisfying the Legendre-Hadamard condition \eqref{LegendreHadamard}, then the linear non-autonomous Cauchy problem for $\bm{f}\in L^2(0,T;\mathbf{L}^2(\Omega))$ and $\bm{u}_0\in\mathbf{H}^1_0(\Omega)$ given by
\[\bm{u}'(t)+\Upsilon(t,\cdot)\mathbb{L}\bm{u}(t)=\bm{f}(t),\quad\text{ for a.e. }t\in]0,T[\] has a solution $\bm{u}\in H^1(0,T;\mathbf{L}^2(\Omega))\cap C([0,T];\mathbf{H}^1_0(\Omega))\cap L^2(0,T;\mathbf{H}^2_{loc}(\Omega))$.

If, in addition, $\Omega$ is bounded with $C^{1,1}$ boundary and $A^{\alpha\beta}_{ij}\in C^{0,1}(\bar{\Omega})$, then $\bm{u}\in H^1(0,T;\mathbf{L}^2(\Omega))\cap C([0,T];\mathbf{H}^1_0(\Omega))\cap L^2(0,T;\mathbf{H}^2(\Omega))$.
\end{corollary}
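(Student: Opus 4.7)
The overall plan is to combine Theorem \ref{LinNonautonomousProbThm} applied with $\mathbb{A}=\mathbb{L}$ (which gives solvability in $MR$ plus continuity in $\mathbf{H}^1_0$) with a pointwise-in-$t$ application of the elliptic regularity Proposition \ref{ClassicalEllipRegThmVectorial} to upgrade the spatial regularity from $\mathbf{L}^2_{\mathbb{L}}$ to $\mathbf{H}^2_{loc}(\Omega)$ (respectively $\mathbf{H}^2(\Omega)$ under the stronger hypotheses), and then to integrate the quantitative elliptic estimate in $t$.

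First, I would check that the hypotheses of Theorem \ref{LinNonautonomousProbThm} are met with $s=1$. Boundedness of $\mathbb{L}:\mathbf{H}^1_0(\Omega)\to\mathbf{H}^{-1}(\Omega)$ is immediate from $A^{\alpha\beta}_{ij}\in L^\infty(\Omega)$ and the dual form \eqref{ClassicalDeriv}. Coercivity \eqref{LEllipBdd} is exactly G\r{a}rding's inequality, which, as recalled in the excerpt, follows from the Legendre--Hadamard condition \eqref{LegendreHadamard} under continuity of $A$ up to $\partial\Omega$. Since $\Upsilon$ satisfies \eqref{GammaNonDivCond}, Theorem \ref{LinNonautonomousProbThm} directly yields a unique $\bm{u}\in MR\cap C([0,T];\mathbf{H}^1_0(\Omega))$ with the estimate \eqref{EstLinNonautCauchy}. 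In particular, $\mathbb{L}\bm{u}\in L^2(0,T;\mathbf{L}^2(\Omega))$, so for a.e. $t\in\,]0,T[$ one has $\mathbb{L}\bm{u}(t)\in\mathbf{L}^2(\Omega)$ (this is how $\bm{u}(t)$ enters $\mathbf{L}^2_{\mathbb{L}}=D(\mathbb{L})$ for a.e. $t$).

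Next, fix such a $t$ and apply Proposition \ref{ClassicalEllipRegThmVectorial} to $\bm{u}(t)\in\mathbf{H}^1_0(\Omega)$, which is a weak solution of $\mathbb{L}\bm{w}=\bm{g}(t):=\mathbb{L}\bm{u}(t)\in\mathbf{L}^2(\Omega)$. This gives $\bm{u}(t)\in\mathbf{H}^2_{loc}(\Omega)$ together with the quantitative interior Caccioppoli-type estimate
\[\norm{\bm{u}(t)}_{\mathbf{H}^2(K)}\leq C_K\bigl(\norm{\bm{u}(t)}_{\mathbf{H}^1_0(\Omega)}+\norm{\mathbb{L}\bm{u}(t)}_{\mathbf{L}^2(\Omega)}\bigr)\]
for every compact $K\subset\Omega$, with $C_K$ independent of $t$. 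Squaring and integrating in $t$, the right-hand side is finite since $\bm{u}\in C([0,T];\mathbf{H}^1_0(\Omega))$ and $\mathbb{L}\bm{u}\in L^2(0,T;\mathbf{L}^2(\Omega))$, so $\bm{u}\in L^2(0,T;\mathbf{H}^2_{loc}(\Omega))$. The measurability in $t$ of $\bm{u}(\cdot)$ as a map into $\mathbf{H}^2(K)$ follows from the strong measurability in $MR$ and the boundedness of the linear solution operator $\mathbf{L}^2(\Omega)\ni\bm{g}\mapsto\bm{w}\in\mathbf{H}^2(K)$ applied to $\bm{g}(t)=\mathbb{L}\bm{u}(t)$.

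For the second part, the additional hypotheses ($\Omega$ bounded with $C^{1,1}$ boundary, $A^{\alpha\beta}_{ij}\in C^{0,1}(\bar\Omega)$) trigger the second statement of Proposition \ref{ClassicalEllipRegThmVectorial}, giving the global estimate
\[\norm{\bm{u}(t)}_{\mathbf{H}^2(\Omega)}\leq C\bigl(\norm{\bm{u}(t)}_{\mathbf{H}^1_0(\Omega)}+\norm{\mathbb{L}\bm{u}(t)}_{\mathbf{L}^2(\Omega)}\bigr)\]
with $C$ independent of $t$, and the same $t$-integration argument then yields $\bm{u}\in L^2(0,T;\mathbf{H}^2(\Omega))$. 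The main (mild) obstacle is ensuring the elliptic estimate is uniform in $t$ and that the resulting time-dependence is sufficiently measurable so that the $L^2$-in-$t$ Bochner integrability is actually valid; this is handled by noting that the elliptic constant depends only on the fixed coefficients $A^{\alpha\beta}_{ij}$ and on $\Omega$ (not on $t$), and by invoking the closedness of $\mathbb{L}$ viewed as an operator from $\mathbf{H}^1_0(\Omega)\cap\mathbf{H}^2_{loc}(\Omega)$ into $\mathbf{L}^2(\Omega)$.
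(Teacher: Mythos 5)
Your proposal is correct and follows essentially the same route as the paper: the corollary is obtained there precisely by applying Theorem \ref{LinNonautonomousProbThm} with $\mathbb{A}=\mathbb{L}$ (using that Legendre--Hadamard plus continuity of $A$ up to $\bar{\Omega}$ gives G\r{a}rding's inequality \eqref{LEllipBdd}) and then invoking Proposition \ref{ClassicalEllipRegThmVectorial} pointwise in $t$ to identify $\mathbf{L}^2_{\mathbb{L}}\subset\mathbf{H}^2_{loc}(\Omega)$ (respectively $\mathbf{H}^2(\Omega)$). Your added details on the $t$-uniform elliptic estimate and measurability are the natural way to make the time-integration rigorous and introduce nothing beyond the paper's intended argument.
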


\vspace{1em}

\emph{Example 2: Anisotropic fractional operators}

Vectorial fractional operators $\tilde{\mathcal{L}}^s_A:\mathbf{H}^s_0(\Omega)\to\mathbf{H}^{-s}(\Omega)$ can also be considered, given for  $0<s\leq1$ by
\begin{equation}\tag{\ref{FracOp}}\langle\tilde{\mathcal{L}}^s_A\bm{u},\bm{v}\rangle=\sum_{\alpha,\beta,i,j}\int_{\mathbb{R}^n} A_{ij}^{\alpha\beta}D^s_\beta u^j\cdot D^s_\alpha v^i\end{equation} for a bounded, coercive tensor $A^{\alpha\beta}_{ij}$ symmetric in $\alpha,\beta$, i.e. 
\[\sum_{\alpha,i}a_*|\xi^\alpha_i|^2\leq \sum_{\alpha,\beta,i,j}A^{\alpha\beta}_{ij}\xi^\alpha_i\cdot\xi^\beta_j\leq a^*\sum_{\alpha,i}|\xi^\alpha_i|^2\quad\text{ for all }\xi\in\mathbb{R}^{m\times n}.\] Here, $D^s_\alpha$ coincides with the classical derivative $\partial_\alpha$ in the classical case of $s=1$, and $\tilde{\mathcal{L}}^s_A$ in \eqref{FracOp} reduces to $\mathbb{L}$ in \eqref{ClassicalDeriv} when $s=1$.

Then, the linear non-autonomous problem for  $\bm{f}\in L^2(0,T;\mathbf{L}^2(\Omega))$ and $\bm{u}_0\in\mathbf{H}^s_0(\Omega)$ given by
\[\bm{u}'(t)+\Upsilon(t,\cdot)\tilde{\mathcal{L}}^s_A\bm{u}(t)=\bm{f}(t),\quad\text{ for a.e. }t\in]0,T[\] has a solution $\bm{u}\in H^1(0,T;\mathbf{L}^2(\Omega))\cap L^2(0,T;\mathbf{L}^2_{\tilde{\mathcal{L}}^s_A})$.

In the particular case when $A^{\alpha\beta}$ is given by a diagonal constant matrix, $\tilde{\mathcal{L}}^s_A$ corresponds to a system of equations defined with the fractional Laplacian. 

Recall that the fractional Laplacian is defined, for $u\in H^s_0(\Omega)$, by \begin{equation}\label{FracLap}(-\Delta)^su(x):=c_{n,2s}\,P.V.\int_{\mathbb{R}^n}\frac{u(x)-u(y)}{|x-y|^{n+2s}}\,dy.\end{equation} Then,  by Theorem 7.1 of \cite{GrubbHormander}, or Theorem 4.1 and Remark 7 of \cite{borthagaray2021besov}, we have

\begin{proposition}\label{BNochettoFracLapReg}
Suppose $\Omega\subset\mathbb{R}^n$ is a bounded Lipschitz domain. Let $f\in L^2(\Omega)$ and $s\in]\frac12,1[$. Then the solution to the homogeneous Dirichlet problem \[(-\Delta)^su=f\text{ in }\Omega,\quad u=0 \text{ in }\Omega^c\] lies in the Besov space \[u\in \dot{B}^{s+1/2}_{2,\infty}(\Omega)\subset H^{\min\{2s,s+1/2\}-\epsilon}(\Omega)\] for any positive $\epsilon<\min\{2s,s+1/2\}$.
\end{proposition}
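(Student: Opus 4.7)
The plan is to first simplify the target. Since $s\in]\tfrac12,1[$ forces $s+\tfrac12<2s$, the exponent $\min\{2s,s+\tfrac12\}=s+\tfrac12$, so I need only establish $u\in\dot{B}^{s+1/2}_{2,\infty}(\Omega)$ and then invoke the standard embedding $\dot{B}^{s+1/2}_{2,\infty}(\Omega)\hookrightarrow H^{s+1/2-\epsilon}(\Omega)$. Existence of the variational solution $u\in H^s_0(\Omega)$ with $\|u\|_{H^s_0}\lesssim\|f\|_{L^2}$ follows immediately from Lax--Milgram applied to the Gagliardo bilinear form associated with $(-\Delta)^s$, using the coercivity built into the equivalent norm \eqref{HsGraphNorm} on $H^s_0(\Omega)$.

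The key task is to gain the extra half-derivative. I would do this by a Nirenberg-type tangential difference quotient argument. Localising to a boundary chart and flattening $\partial\Omega$ by a bi-Lipschitz map (which preserves $H^s$ for $s\le 1$), for a unit tangential direction $\tau$ and small $h>0$ set $u_h(x):=u(x+h\tau)-u(x)$. Since $(-\Delta)^s$ is translation-invariant on $\mathbb{R}^n$, testing the weak equation against a suitably cut off $u_h$ and using $\bm{u}|_{\Omega^c}=0$ yields $[u_h]_{H^s(\mathbb{R}^n)}^2\lesssim |h|\,\|f\|_{L^2}\|u\|_{H^s}$, i.e.\ $\|u_h\|_{H^s}\lesssim |h|^{1/2}$, which is precisely the $B^{s+1/2}_{2,\infty}$ bound in tangential directions.

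In the normal direction the gain is constrained by the universal boundary behaviour $u(x)\sim d(x,\partial\Omega)^s$, since a direct computation shows $d^s\in B^{s+1/2}_{2,\infty}(\Omega)\setminus H^{s+1/2}(\Omega)$; this explains both the sharpness of the $1/2$ loss and the necessity of stating the result in a Besov space with the third index $\infty$ rather than $2$. Away from the boundary the pseudolocal property of $(-\Delta)^s$ combined with standard interior estimates already gives $u\in H^{2s}_{\rm loc}(\Omega)$, so a partition of unity reduces everything to the boundary analysis above.

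The main obstacle is the normal-direction estimate on a merely Lipschitz boundary: one cannot straighten $\partial\Omega$ beyond $H^1$-regularity, so the $d^s$ profile must be captured by barrier arguments of Ros-Oton--Serra type (or boundary Harnack) rather than by a change of variables. For a self-contained proof I would therefore proceed by first establishing the result on a smooth approximation $\Omega_k\to\Omega$ of $\Omega$ via the Grubb--H\"ormander $\mu$-transmission calculus with $\mu=s$, then passing to the limit using the uniform Besov estimate; alternatively, I would invoke directly the Lipschitz-domain Besov regularity machinery of \cite{borthagaray2021besov}, which was designed to handle exactly this obstruction.
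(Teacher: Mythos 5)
The paper offers no proof of this proposition at all: it is quoted directly from the literature (Theorem 7.1 of \cite{GrubbHormander}, or Theorem 4.1 and Remark 7 of \cite{borthagaray2021besov}). Consequently the only part of your proposal that is fully rigorous --- ``invoke directly the Lipschitz-domain Besov regularity machinery of \cite{borthagaray2021besov}'' --- is exactly what the paper does. Your preliminary observations are correct and consistent with the cited works: for $s>\tfrac12$ one has $\min\{2s,s+\tfrac12\}=s+\tfrac12$, the variational solution exists by Lax--Milgram with $\|u\|_{H^s_0(\Omega)}\lesssim\|f\|_{L^2(\Omega)}$, interior regularity gives $u\in H^{2s}_{\mathrm{loc}}(\Omega)$, and the $d^s$ boundary profile explains both the half-derivative loss and why the third Besov index must be $\infty$.

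As a self-contained argument, however, the sketch has two genuine gaps. First, you set up the difference-quotient step after flattening $\partial\Omega$ by a bi-Lipschitz chart, but $(-\Delta)^s$ is not translation invariant in the flattened variables: the pulled-back bilinear form has a kernel with only rough $x$-dependence, so the identity obtained by ``testing the weak equation against a suitably cut off $u_h$'' does not close as written. The actual Savar\'e-type argument used in \cite{borthagaray2021besov} translates in the \emph{original} variables along directions in the cone furnished by the Lipschitz condition (so that translated test functions remain admissible and translation invariance of the kernel is preserved), and the commutator terms generated by the cut-off against the nonlocal form require separate estimates; none of this is routine. Second, the fallback of proving the bound on smooth approximations $\Omega_k\to\Omega$ via the $\mu$-transmission calculus and passing to the limit requires a constant uniform in $k$, but the Grubb--H\"ormander estimates depend on smoothness bounds for $\partial\Omega_k$ that degenerate as $\Omega_k$ approximates a Lipschitz boundary; uniformity in the Lipschitz character is precisely the content of the theorem you would be trying to reprove. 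So the citation route is the proof, both for you and for the paper; the independent sketch, as written, does not stand on its own.
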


Considering the vectorial fractional Laplacian $(-\Delta)^s_m$ defined by 
\[(-\Delta)^s_m=\begin{bmatrix}
c_1(-\Delta)^s &  & 0\\
 & \ddots & \\
0 &  & c_m(-\Delta)^s
\end{bmatrix}\]
for constants $c_1,\cdots,c_m>0$. Then, applying Proposition \ref{BNochettoFracLapReg} component-wise, we have

\begin{corollary}
Suppose $\Omega\subset\mathbb{R}^n$ is a bounded Lipschitz domain. The linear non-autonomous Cauchy problem for $\bm{f}\in L^2(0,T;\mathbf{L}^2(\Omega))$ and $\bm{u}_0\in\mathbf{H}^s_0(\Omega)$ given by 
\[\bm{u}'(t)+\Upsilon(t,\cdot)(-\Delta)^s_m\bm{u}(t)=\bm{f}(t),\quad\text{ for a.e. }t\in]0,T[\] has a solution $\bm{u}\in H^1(0,T;\mathbf{L}^2(\Omega))\cap C([0,T];\mathbf{H}^s_0(\Omega))\cap L^2(0,T;\mathbf{H}^{\min\{2s,s+1/2\}-\epsilon}(\Omega))$
for any positive $\epsilon<\min\{2s,s+1/2\}$.

\end{corollary}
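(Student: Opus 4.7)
The plan is to combine Theorem~\ref{LinNonautonomousProbThm} with the scalar elliptic regularity given by Proposition~\ref{BNochettoFracLapReg}, applied component-wise. First I would verify that the vectorial fractional Laplacian $(-\Delta)^s_m$ falls within the framework of Theorem~\ref{LinNonautonomousProbThm}: being block-diagonal with entries $c_k(-\Delta)^s$, it is symmetric and time-independent, and the inequalities in \eqref{LEllipBdd} hold (with $\mu=0$) as a consequence of the standard Dirichlet form/seminorm equivalence for the fractional Laplacian on $\mathbf{H}^s_0(\Omega)$, with constants depending only on $\min c_k$, $\max c_k$. Since $\Upsilon$ satisfies \eqref{GammaNonDivCond} by hypothesis, Theorem~\ref{LinNonautonomousProbThm} produces a unique solution $\bm{u}\in H^1(0,T;\mathbf{L}^2(\Omega))\cap L^2(0,T;\mathbf{L}^2_{(-\Delta)^s_m})\cap C([0,T];\mathbf{H}^s_0(\Omega))$ satisfying the maximal regularity estimate \eqref{EstLinNonautCauchy}.

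Next I would upgrade the $L^2(0,T;\mathbf{L}^2_{(-\Delta)^s_m})$ regularity to the claimed Sobolev regularity in space. The key point is that $\mathbf{L}^2_{(-\Delta)^s_m}$ is precisely the vectorial domain of the Dirichlet fractional Laplacian: an element $\bm{v}\in\mathbf{H}^s_0(\Omega)$ lies in this domain iff each component $v^k$ solves a scalar Dirichlet problem $c_k(-\Delta)^s v^k = g^k$ with $g^k\in L^2(\Omega)$. For almost every $t\in]0,T[$, we have $(-\Delta)^s_m \bm{u}(t)\in \mathbf{L}^2(\Omega)$, so Proposition~\ref{BNochettoFracLapReg} applies component-wise (recalling $s\in]\tfrac12,1[$ is needed for the stated range; for $s\leq\tfrac12$ the inclusion $\mathbf{H}^s_0(\Omega)\subset \mathbf{H}^{\min\{2s,s+1/2\}-\epsilon}(\Omega)$ is automatic) and yields, for fixed $\epsilon>0$, a constant $C_\epsilon>0$ such that
\[
\|\bm{u}(t)\|_{\mathbf{H}^{\min\{2s,s+1/2\}-\epsilon}(\Omega)} \le C_\epsilon\bigl(\|(-\Delta)^s_m \bm{u}(t)\|_{\mathbf{L}^2(\Omega)}+\|\bm{u}(t)\|_{\mathbf{L}^2(\Omega)}\bigr)
\]
for a.e.\ $t$. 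Squaring and integrating over $]0,T[$ and using that the right-hand side is in $L^2(0,T)$ by the $MR$ regularity already established, I obtain $\bm{u}\in L^2(0,T;\mathbf{H}^{\min\{2s,s+1/2\}-\epsilon}(\Omega))$.

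The continuity in time with values in $\mathbf{H}^s_0(\Omega)$ is delivered directly by Lemma~\ref{ProductRuleTime}, so there is nothing more to do there. The main (mild) obstacle is a bookkeeping one: Proposition~\ref{BNochettoFracLapReg} is a scalar elliptic-regularity statement for the homogeneous Dirichlet fractional Laplacian on a bounded Lipschitz domain, so I must invoke it at a.e.\ fixed time $t$ and then integrate, rather than apply any parabolic regularity theorem directly. Since the spatial operator is time-independent, measurability of $t\mapsto\|\bm{u}(t)\|_{\mathbf{H}^{\min\{2s,s+1/2\}-\epsilon}(\Omega)}$ follows from the pointwise-in-$t$ linear estimate above applied to measurable representatives, and the $L^2$-in-time bound is immediate. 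No fractional-gradient or Gronwall argument is needed beyond what is already in the proof of Theorem~\ref{LinNonautonomousProbThm}.
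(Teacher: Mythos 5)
Your route is essentially the paper's own: the corollary is obtained by feeding $\mathbb{A}=(-\Delta)^s_m$ (symmetric, time-independent, bounded and coercive on $\mathbf{H}^s_0(\Omega)$, with $\mu=0$ on a bounded domain) into Theorem \ref{LinNonautonomousProbThm}, and then applying Proposition \ref{BNochettoFracLapReg} component-wise at a.e.\ fixed time to upgrade $L^2(0,T;\mathbf{L}^2_{(-\Delta)^s_m})$ to $L^2(0,T;\mathbf{H}^{\min\{2s,s+1/2\}-\epsilon}(\Omega))$; the continuity in time comes from Lemma \ref{ProductRuleTime}. Your bookkeeping via the estimate $\norm{\bm{u}(t)}_{\mathbf{H}^{\min\{2s,s+1/2\}-\epsilon}(\Omega)}\le C_\epsilon\bigl(\norm{(-\Delta)^s_m\bm{u}(t)}_{\mathbf{L}^2(\Omega)}+\norm{\bm{u}(t)}_{\mathbf{L}^2(\Omega)}\bigr)$ is consistent with what the paper itself later assumes in \eqref{OpRegBound}; note only that Proposition \ref{BNochettoFracLapReg} as reproduced here is stated qualitatively, so you should either quote the quantitative bound from the references behind it or recover it by a closed-graph argument for the solution operator of the Dirichlet problem. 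That is a presentational point, not a mathematical one.

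There is, however, one genuine error: the parenthetical claim that for $s\le\frac12$ the inclusion $\mathbf{H}^s_0(\Omega)\subset\mathbf{H}^{\min\{2s,s+1/2\}-\epsilon}(\Omega)$ is ``automatic''. In that range $\min\{2s,s+1/2\}-\epsilon=2s-\epsilon$, which exceeds $s$ whenever $\epsilon<s$ (e.g.\ $s=0.4$, $\epsilon=0.1$ would require $H^{0.4}_0\subset H^{0.7}$), so no Sobolev embedding gives the claim. To cover $s\le\frac12$ one needs the Besov/Sobolev regularity of the Dirichlet fractional Laplacian in that range, i.e.\ $u\in H^{2s-\epsilon}(\Omega)$ for $f\in L^2(\Omega)$, which the sources cited in Proposition \ref{BNochettoFracLapReg} do provide but which the proposition as stated (restricted to $s\in]\frac12,1[$) does not; alternatively one should state the corollary, as your main argument effectively does, for $s\in]\frac12,1[$ only. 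For that range your proof is correct and coincides with the paper's.
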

\vspace{1em}

\emph{Example 3: Anisotropic nonlocal operators}

Next, we consider the anisotropic nonlocal operator $\mathbb{L}^s_A:\mathbf{H}^s_0(\Omega)\to\mathbf{H}^{-s}(\Omega)$ \begin{equation}\tag{\ref{NonlocalOp}}\mathbb{L}^s_A\bm{u}=P.V.\int_{\mathbb{R}^n}A(x,y)\frac{\bm{u}(x)-\bm{u}(y)}{|x-y|^{n+2s}}\,dy\end{equation} defined for a symmetric, bounded, coercive matrix kernel $A(x,y)$
for $0<s<1$. Then, once again, the linear non-autonomous problem for  $\bm{f}\in L^2(0,T;\mathbf{L}^2(\Omega))$ and $\bm{u}_0\in\mathbf{H}^s_0(\Omega)$ given by
\[\bm{u}'(t)+\Upsilon(t,\cdot)\mathbb{L}^s_A\bm{u}(t)=\bm{f}(t),\quad\text{ for a.e. }t\in]0,T[\] has a solution $\bm{u}\in H^1(0,T;\mathbf{L}^2(\Omega))\cap L^2(0,T;\mathbf{L}^2_{\mathbb{L}^s_A})$.

Suppose in addition, $m=n$ and the kernel $A(x,y)$ is a measurable, translation-invariant matrix of the form \begin{equation}\label{kAcondSystem1}A(x,y)=\frac{\hat{a}(x-y)}{|x-y|^{n+2s}}\chi_{\mathcal{C}\cap B_r(0)}(x-y)\left(\frac{x-y}{|x-y|}\otimes\frac{x-y}{|x-y|}\right),\end{equation} where $\hat{a}$ is an even, coercive and bounded function such that $0<a_*\leq \hat{a}\leq a^*<\infty$ for some constants $a_*,a^*>0$, $0<r\leq\infty$, and $\mathcal{C}$ is a double cone with apex, i.e. \begin{multline*}\mathcal{C}=\bigg\{h\in\mathbb{R}^n\backslash\{0\}:\frac{h}{|h|}\in \mathcal{O}\cup(-\mathcal{O})\text{ for any open subset }\mathcal{O}\text{ of the unit sphere }S^{d-1}\\\text{ with positive Hausdorff measure}\bigg\}.\end{multline*}

Defining the space $\mathbf{H}^{s}_{loc}(\Omega)$ by $\{\bm{u}\in \mathbf{L}^2(\Omega):\eta\bm{u}\in \mathbf{H}^{s}(\Omega)\,\forall\eta\in C_c^\infty(\Omega)\}$, by Theorem 3.1 of \cite{KassmannMengeshaScott2019CPAASolvability}, we have the following local regularity result for $\mathbb{L}^s_A$ for this special case:

\begin{proposition}\label{KassmannMengeshaScottSystemsEllipReg}
Let $\Omega\subset\mathbb{R}^n$ be an open set, and $\bm{f}\in \mathbf{L}^2(\Omega)$ extended by 0 outside. Then the weak solution to \[\mathbb{L}^s_A\bm{u}=\bm{f}\text{ in }\Omega,\quad \bm{u}=\bm{0}\text{ in }\Omega^c\] lies in $\mathbf{H}^{2s}_{loc}(\Omega)$. Moreover, for any $\eta\in C_c^\infty(\Omega)$, there exists a constant $C$ such that \[\norm{\eta \bm{u}}_{\mathbf{H}^{2s}(\mathbb{R}^n)}\leq C\norm{\bm{f}}_{\mathbf{L}^2(\Omega)}.\]
\end{proposition}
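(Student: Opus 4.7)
The plan is to prove the proposition by the Nirenberg difference-quotient method adapted to the fractional setting, exploiting the translation invariance encoded in the kernel structure \eqref{kAcondSystem1}. Fix $\eta\in C_c^\infty(\Omega)$ and pick a larger cutoff $\tilde\eta\in C_c^\infty(\Omega)$ with $\tilde\eta\equiv 1$ on a neighbourhood of $\mathrm{supp}\,\eta$. For small $h\in\mathbb{R}^n$, denote the first-order finite difference by $\delta_h\bm{u}(x):=\bm{u}(x+h)-\bm{u}(x)$. The first step is to observe that, because $\hat{a}(x-y)\chi_{\mathcal{C}\cap B_r(0)}(x-y)$ and the rank-one factor $(x-y)/|x-y|\otimes(x-y)/|x-y|$ depend only on $x-y$, the operator $\mathbb{L}^s_A$ commutes with spatial translations, so $\mathbb{L}^s_A(\delta_h\bm{u})=\delta_h\bm{f}$ holds in the interior region where $\tilde\eta\equiv 1$.

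Next, I would test the translated equation against $\eta^2\delta_h\bm{u}$ and derive a Caccioppoli-type estimate. The coercivity assumption on $\hat{a}$ together with the identity
\begin{equation*}
\langle \mathbb{L}^s_A \bm{v},\bm{v}\rangle=\tfrac12\int_{\mathbb{R}^n}\int_{\mathbb{R}^n} \hat{a}(x-y)\chi_{\mathcal{C}\cap B_r(0)}(x-y)\,\frac{\bigl|(\bm{v}(x)-\bm{v}(y))\cdot\tfrac{x-y}{|x-y|}\bigr|^2}{|x-y|^{n+2s}}\,dx\,dy
\end{equation*}
bounds the left-hand side below by a directional fractional seminorm, while the cutoff commutator is controlled by the standard fractional Leibniz rule. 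After absorbing the commutator terms and using the Cauchy--Schwarz inequality on the right-hand side together with the a priori bound $\|\bm{u}\|_{\mathbf{H}^s(\mathbb{R}^n)}\le C\|\bm f\|_{\mathbf L^2(\Omega)}$ coming from coercivity of $\mathbb{L}^s_A$, one obtains
\begin{equation*}
[\eta\,\delta_h\bm{u}]_{\mathbf{H}^s(\mathbb{R}^n)}^2 \leq C\bigl(\|\bm f\|_{\mathbf{L}^2(\Omega)}^2+\|\bm{u}\|_{\mathbf{H}^s(\mathbb{R}^n)}^2\bigr)|h|^{2s},
\end{equation*}
uniformly in $h$ sufficiently small.

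Dividing by $|h|^{2s}$ and passing to the limit $h\to 0$ through directions in $\mathcal{C}$, I would conclude via the Bourgain--Brezis--Mironescu characterisation of higher-order fractional Sobolev spaces through iterated difference quotients that $\eta\bm{u}\in\mathbf{H}^{2s}(\mathbb{R}^n)$ with the claimed norm estimate. The main obstacle lies precisely at this last step: since finite differences are taken only in directions belonging to the double cone $\mathcal{C}$, one must recover full isotropic $\mathbf{H}^{2s}$ regularity from purely anisotropic second-order control. Here the hypothesis that $\mathcal{O}\subset S^{n-1}$ carries positive Hausdorff measure is essential, since it allows a Fourier multiplier / spherical-averaging argument to show that the associated seminorm dominates the standard Gagliardo seminorm of order $2s$, thus upgrading directional regularity to full $\mathbf{H}^{2s}_{loc}(\Omega)$ regularity. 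This is precisely the route taken in Theorem 3.1 of \cite{KassmannMengeshaScott2019CPAASolvability}, which I would invoke to close the argument rigorously in the vectorial case.
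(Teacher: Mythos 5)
Your proposal ends where the paper begins: the paper offers no independent proof of this proposition, stating it as a direct quotation of Theorem 3.1 of \cite{KassmannMengeshaScott2019CPAASolvability} applied componentwise to the kernel \eqref{kAcondSystem1}, and your final step is to invoke exactly that theorem. On that level the two match, and deferring to the cited result is acceptable here.

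However, you should not mistake the preceding sketch for a self-contained argument; as written it has concrete gaps. First, testing the translated equation is not straightforward: $\bm f\in\mathbf L^2$ only, so $\delta_h\bm f$ carries no smallness in $h$ unless you transfer the difference onto the test function, and the commutator terms generated by $\eta$ against the nonlocal operator are not controlled by a ``standard fractional Leibniz rule'' in any routine way. Second, the coercivity identity you write bounds the form from below only by the \emph{projected} seminorm involving $(\bm v(x)-\bm v(y))\cdot\frac{x-y}{|x-y|}$ restricted to directions in $\mathcal C$; upgrading this to the full vectorial $\mathbf H^s$ seminorm requires both a nonlocal Korn-type inequality (to handle the rank-one projection) and the cone-averaging argument (to handle the anisotropy). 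This double upgrade is the actual substance of the cited theorem, not an absorption step. Third, the concluding step is quantitatively off: a uniform bound $[\eta\,\delta_h\bm u]_{\mathbf H^s(\mathbb R^n)}\leq C|h|^{s}$ with first-order differences only places $\eta\bm u$ in the Besov space $B^{2s}_{2,\infty}$ locally, hence in $\mathbf H^{2s-\epsilon}_{loc}(\Omega)$ for every $\epsilon>0$, not in $\mathbf H^{2s}_{loc}(\Omega)$ as the proposition asserts (and you invoke ``iterated'' difference quotients while only ever using $\delta_h$). So if you want more than the citation, the difference-quotient route must be repaired at these three points; as a blind reconstruction of the paper's treatment, simply citing Theorem 3.1 of \cite{KassmannMengeshaScott2019CPAASolvability} is what the paper itself does.
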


\begin{remark}
Observe that $\mathbb{L}_A^s$ can be viewed as the nonlocal version of Example 1, as explained in pages 1304--1305 of \cite{KassmannMengeshaScott2019CPAASolvability}. See also Lemma 3.1 of \cite{FelsingerKassmannVoigt}.
\end{remark}

As a corollary, we once again have
\begin{corollary}
The linear non-autonomous Cauchy problem for $\bm{f}\in L^2(0,T;\mathbf{L}^2(\Omega))$ and $\bm{u}_0\in\mathbf{H}^s_0(\Omega)$ given by
\[\bm{u}'(t)+\Upsilon(t,\cdot)\mathbb{L}^s_A\bm{u}(t)=\bm{f}(t),\quad\text{ for a.e. }t\in]0,T[\] has a solution $\bm{u}\in H^1(0,T;\mathbf{L}^2(\Omega))\cap C([0,T];\mathbf{H}^s_0(\Omega))\cap L^2(0,T;\mathbf{H}^{2s}_{loc}(\Omega))$.

\end{corollary}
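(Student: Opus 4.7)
The plan is to combine the abstract non-autonomous maximal regularity result, Theorem \ref{LinNonautonomousProbThm}, with the stationary local regularity statement in Proposition \ref{KassmannMengeshaScottSystemsEllipReg}. Since $\mathbb{L}^s_A$ is symmetric, bounded and $\mathbf{L}^2(\Omega)$-coercive on $\mathbf{H}^s_0(\Omega)$ (as recorded in the introduction around \eqref{NonlocalOp}), and $\Upsilon$ satisfies \eqref{GammaNonDivCond} by assumption, Theorem \ref{LinNonautonomousProbThm} applies verbatim to the operator $\mathbb{A}=\mathbb{L}^s_A$. This already delivers a unique
\[\bm{u}\in H^1(0,T;\mathbf{L}^2(\Omega))\cap L^2(0,T;\mathbf{L}^2_{\mathbb{L}^s_A})\cap C([0,T];\mathbf{H}^s_0(\Omega))\]
together with the estimate \eqref{EstLinNonautCauchy}. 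It remains only to upgrade the $L^2(0,T;\mathbf{L}^2_{\mathbb{L}^s_A})$ component to $L^2(0,T;\mathbf{H}^{2s}_{loc}(\Omega))$.

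For this, fix $\eta\in C_c^\infty(\Omega)$ and observe that for almost every $t\in\,]0,T[$ the function $\bm{u}(t)$ lies in $\mathbf{H}^s_0(\Omega)$ with $\mathbb{L}^s_A\bm{u}(t)\in \mathbf{L}^2(\Omega)$, so $\bm{u}(t)$ is a weak solution of the stationary Dirichlet problem $\mathbb{L}^s_A\bm{w}=\bm{g}(t)$, $\bm{w}=\bm{0}$ on $\Omega^c$, with $\bm{g}(t):=\mathbb{L}^s_A\bm{u}(t)\in \mathbf{L}^2(\Omega)$. Because the kernel $A(x,y)$ has the special form \eqref{kAcondSystem1}, Proposition \ref{KassmannMengeshaScottSystemsEllipReg} is applicable pointwise in $t$ and yields a constant $C=C(\eta,A,s,n)$, independent of $t$, such that
\[\norm{\eta\,\bm{u}(t)}_{\mathbf{H}^{2s}(\mathbb{R}^n)}\leq C\norm{\mathbb{L}^s_A\bm{u}(t)}_{\mathbf{L}^2(\Omega)}\qquad\text{for a.e. }t\in\,]0,T[.\]

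Squaring and integrating this estimate in $t$, using that $\mathbb{L}^s_A\bm{u}\in L^2(0,T;\mathbf{L}^2(\Omega))$ by the $MR$-bound \eqref{EstLinNonautCauchy}, we obtain $\eta\,\bm{u}\in L^2(0,T;\mathbf{H}^{2s}(\mathbb{R}^n))$ for every test cutoff $\eta\in C_c^\infty(\Omega)$, i.e.\ $\bm{u}\in L^2(0,T;\mathbf{H}^{2s}_{loc}(\Omega))$, which closes the argument.

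The only subtle point is measurability in $t$ of the map $t\mapsto \|\eta\bm{u}(t)\|_{\mathbf{H}^{2s}(\mathbb{R}^n)}$, but this is routine: since $\bm{u}\in L^2(0,T;\mathbf{L}^2_{\mathbb{L}^s_A})$ is strongly measurable and the regularity estimate of Proposition \ref{KassmannMengeshaScottSystemsEllipReg} gives a continuous linear map $\mathbf{L}^2(\Omega)\to \mathbf{H}^{2s}(\mathbb{R}^n)$ via $\bm{g}\mapsto \eta\,(\mathbb{L}^s_A)^{-1}\bm{g}$, composition preserves measurability. Thus no genuine obstacle arises here — the whole content of the corollary is the clean concatenation of the abstract evolutionary result with a time-independent elliptic regularity statement.
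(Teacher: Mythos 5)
Your argument is correct and is exactly the route the paper intends: the corollary is stated as an immediate consequence of Theorem \ref{LinNonautonomousProbThm} (giving the unique solution in $MR\cap C([0,T];\mathbf{H}^s_0(\Omega))$) combined, for a.e.\ $t$, with the stationary estimate of Proposition \ref{KassmannMengeshaScottSystemsEllipReg} applied to $\bm{g}(t)=\mathbb{L}^s_A\bm{u}(t)\in\mathbf{L}^2(\Omega)$, then squared and integrated in time. Your added remark on measurability is a harmless refinement of what the paper leaves implicit.
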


\section{The Nonlinear Problem $\sigma\leq s\leq1$}\label{sect:MRQuasiSigmaSmall}

We next consider the quasilinear vectorial problem, when $0<\sigma< s\leq1$, extending the nonlocal vectorial problem with no source function considered in \cite{LaasriMugnolo2020MMAS}, as well as the vectorial semilinear case in \cite{Amann1985GlobalExistenceSemilinearParabolicSystems}, \cite{Morgan1989GlobalExistenceSemilinearParabolicSystems} and \cite{ArendtDierSystems}. This also generalises \cite{ArendtChill} to systems of the form \eqref{MainProblemGlobalQuasilinear} defined in a bounded or unbounded open set $\Omega\subset\mathbb{R}^n$. We will apply the Schaefer fixed point theorem, which is a generalisation of the Leray-Schauder fixed point theorem to locally convex spaces, to the approximating bounded subsets $\Omega_k\subset\mathring{\Omega}$, as in \cite{ArendtChill}, so that the regularity of the boundary of $\Omega$ can be ignored.

Assume \begin{equation}\label{DomainAssump}D(\mathbb{A})=\mathbf{L}^2_{\mathbb{A}}\subset \mathbf{H}^{s+\theta}_{loc}(\Omega)\quad\text{ for some }\theta\geq0.\end{equation} Note that this assumption is weaker than the one given in Equation (4.2) of \cite{ArendtChill}, and allows us to cover the fractional derivatives as well.

Then, we have the following main result:
\begin{theorem}\label{NonlinNonautonomousProbThmSupercritical} 
Suppose $\Omega\subset\mathbb{R}^n$ is an open set.
Let $\mathbb{A}$ satisfy \eqref{DomainAssump} for $\theta\geq0$, and \[\Pi:]0,T[\times\Omega\times\mathbb{R}^m\times\mathbb{R}^q\to\mathbb{R}^{m\times m}\] be a measurable, invertible matrix $\Pi=\Pi(t,x,\bm{u},\bm{p})$ satisfying \begin{equation}\tag{\ref{GammaNonDivCond}}\underline{\gamma}|\xi|^2\leq\Pi\xi\cdot\xi\quad\text{ and }\quad\Pi\xi\cdot\xi^*\leq\bar{\gamma}|\xi||\xi^*|,\quad 0<\underline{\gamma}\leq\bar{\gamma},\quad\text{ for all }\xi,\xi^*\in\mathbb{R}^m,\end{equation} such that $\Pi$ is continuous in $\bm{u}$ and $\bm{p}$ for almost every $(t,x)$. Let  \[\bm{f}:]0,T[\times\Omega\times\mathbb{R}^m\times\mathbb{R}^q\to\mathbb{R}^m\] be a measurable vector function that is continuous in $\bm{u}$ and $\bm{p}$ for almost every $(t,x)$, satisfying \begin{equation}\label{fcondNonlin}|\bm{f}(t,x,\bm{u},\bm{p})|\leq F(t,x)+\Lambda_1|\bm{u}|+\Lambda_2|\bm{p}| \quad\text{ for some }F\in L^2(0,T;L^2(\Omega)), \Lambda_1,\Lambda_2\geq0.\end{equation} Then for every $\bm{u}_0$ such that $\bm{u}_0\in \mathbf{H}^s_0(\Omega)$, there exists  \begin{equation}\label{QuasilinNonautonomousSupercriticalSolnReg}\bm{u}\in H^1(0,T;\mathbf{L}^2(\Omega))\cap L^2(0,T;\mathbf{L}^2_{\mathbb{A}})\cap C([0,T];\mathbf{H}^s_0(\Omega)), \end{equation} solving the problem \begin{equation}\label{QuasilinNonautonomousProbSupercritical}\begin{split}& \bm{u}'(t)+\Pi(t,x,\bm{u},\Sigma \bm{u})\mathbb{A}\bm{u}(t)=\bm{f}(t,x,\bm{u},\Sigma \bm{u})\quad\text{ for a.e. }t\in]0,T[,\\&\bm{u}(0)=\bm{u}_0\end{split}\end{equation}  where $\Sigma$ represents fractional derivatives of order $\sigma\leq s$ with $0<\sigma<s+\theta$ for $0<s\leq1$. Moreover, there exists a constant $c'=c'(\underline{\gamma},\bar{\gamma},a_*,a^*,\Lambda_1,\Lambda_2,T)>0$ such that for every solution $\bm{u}$ of \eqref{QuasilinNonautonomousProbSupercritical}, \begin{equation}\label{EstQuasilinNonautCauchySupercritical}\norm{\bm{u}}_{MR}\leq c'\left(\norm{F}_{L^2(0,T;L^2(\Omega))}+\norm{\bm{u}_0}_{\mathbf{H}^s_0(\Omega)}\right).\end{equation}
\end{theorem}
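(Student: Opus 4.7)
The plan is to apply Schaefer's (Leray--Schauder) fixed point theorem on bounded approximations of $\Omega$, following the strategy of \cite{ArendtChill} but adapted to systems through Theorem \ref{LinNonautonomousProbThm} and to fractional/nonlocal derivatives $\Sigma$ through interpolation. Since \eqref{DomainAssump} is only a local regularity statement, I would first fix an exhaustion $\Omega_k \subset\subset \Omega$ by bounded open subsets, produce fixed-point solutions $\bm{u}_k$ on each $\Omega_k$ with a uniform $MR$-bound, and then pass to the limit $k\to\infty$.

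On each $\Omega_k$, I define $T_k:X_k\to X_k$ with $X_k:=L^2(0,T;\mathbf{H}^{\sigma}(\Omega_k))$ by sending $\bm{v}$ (zero-extended outside $\Omega_k$) to the unique $\bm{u}\in MR$ solving
\begin{equation*}
\bm{u}'+\Pi(t,\cdot,\bm{v},\Sigma\bm{v})\,\mathbb{A}\bm{u}=\bm{f}(t,\cdot,\bm{v},\Sigma\bm{v}),\qquad \bm{u}(0)=\bm{u}_0,
\end{equation*}
which exists by Theorem \ref{LinNonautonomousProbThm} because $\Pi_{\bm{v}}$ satisfies \eqref{GammaNonDivCond} pointwise and $\bm{f}_{\bm{v}}\in L^2(0,T;\mathbf{L}^2(\Omega))$ by \eqref{fcondNonlin}. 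Continuity of $T_k$ on $X_k$ follows from Carath\'eodory continuity of $\Pi$ and $\bm{f}$, the linear estimate \eqref{EstLinNonautCauchy}, and dominated convergence. Compactness of $T_k$ is Aubin--Lions: by \eqref{DomainAssump} the image lies in $H^1(0,T;\mathbf{L}^2)\cap L^2(0,T;\mathbf{H}^{s+\theta}(\Omega_k))$, and Rellich gives the compact embedding $\mathbf{H}^{s+\theta}(\Omega_k)\hookrightarrow\hookrightarrow \mathbf{H}^{\sigma}(\Omega_k)$ since $\sigma<s+\theta$.

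The crucial step is the Schaefer a priori bound. If $\bm{u}=\lambda T_k(\bm{u})$ for some $\lambda\in[0,1]$, then $\bm{u}$ solves the linear problem with data $\lambda\bm{f}_{\bm{u}}$ and $\lambda\bm{u}_0$ and coefficient $\Pi_{\bm{u}}$, so applying Theorem \ref{LinNonautonomousProbThm} with $\Upsilon=\Pi_{\bm{u}}$ and using \eqref{fcondNonlin} yields
\begin{equation*}
\norm{\bm{u}}_{MR}\leq c\bigl(\norm{F}_{L^2(0,T;L^2(\Omega))}+\Lambda_1\norm{\bm{u}}_{L^2(0,T;\mathbf{L}^2)}+\Lambda_2\norm{\Sigma\bm{u}}_{L^2(0,T;\mathbf{L}^2)}+\norm{\bm{u}_0}_{\mathbf{H}^s_0}\bigr).
\end{equation*}
Since $\sigma<s+\theta$, the interpolation $\norm{\Sigma\bm{u}}_{\mathbf{L}^2(\Omega_k)}\leq \varepsilon\norm{\bm{u}}_{\mathbf{H}^{s+\theta}(\Omega_k)}+C_\varepsilon\norm{\bm{u}}_{\mathbf{L}^2(\Omega_k)}$ combined with \eqref{DomainAssump} lets me absorb the $\Sigma\bm{u}$-term into $\norm{\bm{u}}_{MR}$ for $\varepsilon$ sufficiently small, and the remaining $L^2(\mathbf{L}^2)$-term on $\bm{u}$ is closed by Grönwall exactly as in \eqref{L2HsnormIneq}--\eqref{EstLinNonautCauchy3}. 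This produces \eqref{EstQuasilinNonautCauchySupercritical} with $c'$ independent of both $\lambda$ and $k$, and Schaefer's theorem then yields a fixed point $\bm{u}_k\in MR$ of $T_k$ satisfying \eqref{QuasilinNonautonomousProbSupercritical} on $\Omega_k$.

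Finally, the uniform $MR$-bound on the zero-extensions $\bm{u}_k$ allows me to extract a subsequence converging weakly in $MR$ and, by Aubin--Lions and a diagonal argument over $\Omega_k$, strongly in $L^2(0,T;\mathbf{H}^{\sigma}_{loc}(\Omega))$ to some $\bm{u}$; Carath\'eodory continuity of $\Pi$ and $\bm{f}$ together with the growth \eqref{fcondNonlin} then lets me pass to the limit in the nonlinear terms and recover \eqref{QuasilinNonautonomousProbSupercritical} on all of $\Omega$, while the continuity $\bm{u}\in C([0,T];\mathbf{H}^s_0(\Omega))$ comes from Lemma \ref{ProductRuleTime}. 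I expect the main obstacle to be the a priori estimate when $\sigma$ approaches $s+\theta$, where both the interpolation constant $C_\varepsilon$ and the subsequent Grönwall factor deteriorate, and, secondarily, ensuring that the nonlocal action of $\Sigma$ on the zero-extensions $\bm{u}_k$ is compatible with the passage to the limit inside the Carath\'eodory nonlinearities $\Pi_{\bm{u}_k}$ and $\bm{f}_{\bm{u}_k}$.
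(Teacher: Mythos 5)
Your overall architecture (exhaustion $\Omega_k$, linear solvability via Theorem \ref{LinNonautonomousProbThm}, Aubin--Lions compactness from \eqref{DomainAssump}, fixed point, limit in $k$) matches the paper, but two of your key steps do not go through as stated. First, the a priori bound. Your plan is to apply Theorem \ref{LinNonautonomousProbThm} as a black box, then use the interpolation $\norm{\Sigma\bm{u}}_{\mathbf{L}^2(\Omega_k)}\leq\varepsilon\norm{\bm{u}}_{\mathbf{H}^{s+\theta}(\Omega_k)}+C_\varepsilon\norm{\bm{u}}_{\mathbf{L}^2(\Omega_k)}$ and ``absorb'' into $\norm{\bm{u}}_{MR}$. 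But \eqref{DomainAssump} is only a qualitative \emph{local} inclusion: the passage from $\norm{\bm{u}}_{L^2(0,T;\mathbf{H}^{s+\theta}(\Omega_k))}$ to the $MR$-norm carries a constant $C_k$ coming from interior regularity on $\Omega_k$, which degenerates as $\Omega_k\uparrow\Omega$, so the resulting constant is not independent of $k$ --- yet $k$-uniformity is exactly what you need both for \eqref{EstQuasilinNonautCauchySupercritical} and for extracting the limit. Moreover, absorption after a global-in-time application of \eqref{EstLinNonautCauchy} is structurally impossible unless $\Lambda_2$ is small (the constant $c$ there is not small, and Gronwall cannot be applied to an inequality of the form $X\leq A+BX$); one must rerun the time-localized energy estimate, testing with $[\Pi_{\bm{u}}^*]^{-1}\bm{u}'$, as in \eqref{EstNonlin2}. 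In that estimate the hypothesis $\sigma\leq s$ is what closes the argument, since then $\norm{\Sigma\bm{u}(\tau)}_{\mathbf{L}^2}$ is bounded by $\norm{\bm{u}(\tau)}_{\mathbf{H}^s_0(\Omega)}$ with a global, $k$-independent Sobolev constant and Gronwall applies; the exponent $\theta$ in $\sigma<s+\theta$ enters only through the compact embedding \eqref{MRECptEmbedSupercritical}, never through the estimate. Your interpolation route is in effect an attempt to reach $\sigma$ up to $s+\theta$ with linearly growing $\bm{f}$, which is not what the theorem claims and which, per Theorem \ref{NonlinNonautonomousProbThmBigSigma}, requires the extra hypotheses \eqref{OpRegBound} and \eqref{fcondNonlinBigSigmaSublinGrowth} (sublinear growth or smallness of $\Lambda_2$).

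Second, your approximation truncates the \emph{unknown} instead of the \emph{source}, and this creates problems at both ends. Feeding the zero-extension of $\bm{v}|_{\Omega_k}$ into $\bm{f}$ does not even guarantee $\bm{f}_{\bm{v}}\in L^2(0,T;\mathbf{L}^2(\Omega))$: for $\sigma\geq\frac12$ the zero-extension of an $\mathbf{H}^\sigma(\Omega_k)$ function need not lie in $\mathbf{H}^\sigma(\mathbb{R}^n)$, so $\Sigma\tilde{\bm{v}}$ need not be square-integrable on $\Omega$ and Theorem \ref{LinNonautonomousProbThm} cannot be invoked; the paper avoids this by using a bounded extension together with the cut-off source $\chi_{\Omega_k}\bm{f}$, so that only $\norm{\Sigma\tilde{\bm{v}}}_{\mathbf{L}^2(\Omega_k)}$ matters, controlled by \eqref{SigmaNormBounds}. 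More seriously, your fixed point $\bm{u}_k$ then solves a PDE whose nonlinearities are evaluated at the truncation $\chi_{\Omega_k}\bm{u}_k$, and since $\Sigma$ is nonlocal, $\Sigma(\chi_{\Omega_k}\bm{u}_k)\neq\Sigma\bm{u}_k$ even on $\Omega_k$; removing this mismatch as $k\to\infty$ requires the tails $\norm{\bm{u}_k}_{\mathbf{L}^2(\Omega\setminus\Omega_k)}$ to vanish, which the uniform $MR$-bound does not provide --- you flag this obstacle yourself but offer no mechanism to overcome it. The paper's construction sidesteps it entirely: the fixed point is sought in the Fr\'echet space $E=L^2(0,T;\mathbf{H}^\sigma_{loc}(\Omega))$ via the locally convex Schaefer theorem (Theorem \ref{Schaefer}), so the nonlinearities in \eqref{QuasilinNonautonomousProbOmegak} are evaluated at $\bm{u}_k$ itself on all of $\Omega$, and only the multiplicative cut-off $\chi_{\Omega_k}$ on $\bm{f}$ has to be removed in the limit, which is done by dominated convergence against compactly supported test functions. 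To repair your argument you would essentially have to adopt this structure (or otherwise prove decay of the $\bm{u}_k$ near $\partial\Omega$ and at infinity, which is not available here).
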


\begin{remark}
In general, this solution is not unique.
\end{remark}

\begin{remark}
This extends the results for the classical derivatives with $s=\theta=1$ so that $s+\theta=2$ with $\sigma=1$ as considered in \cite{ArendtChill} for the scalar problem, as well as \cite{ArendtDierSystems} and \cite{LaasriMugnolo2020MMAS} for the semilinear vectorial problem and quasilinear vectorial problem respectively. In particular, we can consider fractional or nonlocal derivatives of any order $\sigma\leq s\leq1$. This generalises the classical gradient, and is conceptually similar to the ideas of Boussandel (see \cite{Boussandel2011JDE} and \cite{Boussandel2019CzechMath}), where he considers the classical gradient weighted by a measure.

For general operators $\mathbb{A}$ satisfying \eqref{LEllipBdd}, the theorem applies with $\theta=0$ and $\sigma<s\leq1$. For special operators satisfying \eqref{DomainAssump} with $\theta>0$, we can consider derivatives of order $\sigma$ up to and including $\sigma=s$ for $s\leq 1$, as in Section \ref{subsect:MRQuasiLinearAppPopulation}.
This includes the classical vectorial operator $\mathbb{L}$ with $s=\theta=1$ as given in Proposition \ref{ClassicalEllipRegThmVectorial}, as well as the nonlocal vectorial operator $\mathbb{L}^s_A$ with $\theta=s<1$ in Proposition \ref{KassmannMengeshaScottSystemsEllipReg}. Observe that the latter includes the case of the fractional Laplacian $(-\Delta)^s_m$ of Proposition \ref{BNochettoFracLapReg}, with $0<\theta<\frac12$ for $s>\frac12$ and $\theta<s$ for $s<\frac12$. Furthermore, when $\mathbb{A}=(-\Delta)^s_m$, $\Sigma \bm{u}$ can involve both the fractional and the nonlocal derivatives. This is because the fractional Laplacian $(-\Delta)^s$ can be represented by both the fractional derivatives $D^s$ and the nonlocal derivatives $\mathcal{D}^s$ i.e. $(-\Delta)^su=-D^s\cdot D^su=c_{n,s}^s\mathcal{D}_s\cdot\mathcal{D}^su$ when considered in $\mathbb{R}^n$. 
\end{remark}

We shall also use the Schaefer fixed point theorem, as it is reproduced in  Theorem 2.2 of \cite{arendt2019galerkin}, which we state here for reference. Note that if $E$ is a Banach space, this theorem reduces to the Leray-Schauder fixed point theorem (see, for instance, Theorem 11.3 of \cite{GilbargTrudingerBook}).
\begin{theorem}[Schaefer Fixed Point Theorem]\label{Schaefer}
Let $E$ be a complete locally convex vector space and let $S:E\to E$ be a continuous mapping. Assume that there exists a continuous seminorm $p:E\to\mathbb{R}^+$, a constant $R>0$, and a compact set $\mathcal{K}\subset E$ such that the Schaefer set \[\mathscr{S}=\{\bm{u}\in E:\bm{u}=\lambda S\bm{u}\text{ for some }\lambda\in[0,1]\}\] is included in \[\mathcal{C}:=\{\bm{u}\in E:p(\bm{u})<R\}\] such that \[S\mathcal{C}\subset\mathcal{K}.\] Then $S$ has a fixed point.
\end{theorem}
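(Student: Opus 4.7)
The plan is to reduce the theorem to the classical Schauder--Tychonoff fixed point theorem (a continuous self-map of a nonempty compact convex subset of a Hausdorff locally convex space has a fixed point) via the standard Schaefer retraction trick, modified by an $\epsilon$-perturbation to accommodate the fact that the compactness hypothesis $S\mathcal{C}\subset\mathcal{K}$ is imposed only on the open sublevel set $\mathcal{C}=\{p<R\}$. I implicitly assume $E$ is Hausdorff, as is customary in this context.

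First I would set $\mathcal{K}_0:=\overline{\mathrm{co}}(\mathcal{K}\cup\{0\})$; since $E$ is complete and $\mathcal{K}\cup\{0\}$ is compact, Mazur's theorem (extended to complete locally convex spaces) ensures $\mathcal{K}_0$ is compact and convex. Next, for each $\epsilon\in(0,R)$ put $R_\epsilon:=R-\epsilon$ and introduce the continuous radial retraction
\[
r_\epsilon(u):=\begin{cases} u,& p(u)\le R_\epsilon,\\ \tfrac{R_\epsilon}{p(u)}\,u,& p(u)>R_\epsilon,\end{cases}
\]
so that $r_\epsilon(E)\subset\{p\le R_\epsilon\}\subset\mathcal{C}$. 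Let $\mathcal{K}_\epsilon:=\overline{\mathrm{co}}(r_\epsilon(\mathcal{K})\cup\{0\})$, which is compact, convex, and contained in the convex sublevel set $\{p\le R_\epsilon\}\subset\mathcal{C}$. Define $T_\epsilon:\mathcal{K}_\epsilon\to\mathcal{K}_\epsilon$ by $T_\epsilon(u):=r_\epsilon(S(u))$; for $u\in\mathcal{K}_\epsilon\subset\mathcal{C}$ the hypothesis yields $S(u)\in\mathcal{K}$, hence $T_\epsilon(u)\in r_\epsilon(\mathcal{K})\subset\mathcal{K}_\epsilon$. Continuity of $T_\epsilon$ is clear, so Schauder--Tychonoff furnishes a fixed point $u^*_\epsilon=r_\epsilon(S(u^*_\epsilon))$.

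Then I split into cases. If $p(S(u^*_\epsilon))\le R_\epsilon$ the retraction acts as the identity and $u^*_\epsilon=S(u^*_\epsilon)$: a genuine fixed point of $S$. Otherwise $u^*_\epsilon=\lambda_\epsilon S(u^*_\epsilon)$ with $\lambda_\epsilon:=R_\epsilon/p(S(u^*_\epsilon))\in(0,1)$, placing $u^*_\epsilon\in\mathscr{S}$ with $p(u^*_\epsilon)=R_\epsilon$. If this second alternative occurred along a net $\epsilon_\alpha\to 0$, each $u^*_{\epsilon_\alpha}=\lambda_{\epsilon_\alpha}S(u^*_{\epsilon_\alpha})$ would lie in the compact set $\mathcal{K}_0$ (since $S(u^*_{\epsilon_\alpha})\in\mathcal{K}$ and $\lambda_{\epsilon_\alpha}\in[0,1]$), so a subnet would converge to some $(u^*,\lambda^*)$; continuity of $S$ and $p$ then gives $u^*=\lambda^*S(u^*)\in\mathscr{S}$ together with $p(u^*)=\lim R_{\epsilon_\alpha}=R$, contradicting $\mathscr{S}\subset\{p<R\}$. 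Hence the first alternative holds for some $\epsilon$, which delivers the fixed point.

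The main delicate point is the mismatch between the open set $\mathcal{C}$ on which the hypothesis $S\mathcal{C}\subset\mathcal{K}$ is imposed and the naturally closed image of a radial retraction; the $\epsilon$-perturbation is precisely what bridges this gap, while the strictness of the inclusion $\mathscr{S}\subset\mathcal{C}$ supplies exactly the inequality $p(u^*)<R$ needed to contradict the limiting equality $p(u^*)=R$.
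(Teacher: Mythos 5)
The paper does not prove this theorem: it is stated and used as imported from Theorem 2.2 of \cite{arendt2019galerkin}, so there is no in-paper proof to compare against.

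On its own terms your argument is correct. Every ingredient checks out: the radial retraction $r_\epsilon$ is continuous; the closed convex hull of a compact set is compact in a complete locally convex space; $T_\epsilon$ maps $\mathcal{K}_\epsilon$ into itself via $S\mathcal{C}\subset\mathcal{K}$; the Hausdorff assumption gives closedness of $\mathcal{K}$ and uniqueness of net limits; and the final subnet argument inside the compact product $\mathcal{K}_0\times[0,1]$ correctly forces $p(u^*)=R$, contradicting $\mathscr{S}\subset\{p<R\}$. That said, the $\epsilon$-perturbation you flag as the delicate point is avoidable. Since $S$ is continuous on all of $E$ and $\mathcal{K}$ is compact (hence closed), one has
\[
S\bigl(\overline{\mathcal{C}}\bigr)\subset\overline{S(\mathcal{C})}\subset\overline{\mathcal{K}}=\mathcal{K},
\]
while $\overline{\mathcal{C}}=\{p\le R\}$ because, for a continuous seminorm and $R>0$, the open sublevel set is dense in the closed one (consider $(1-1/n)u\to u$). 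You may therefore retract directly onto $\{p\le R\}$ and run the Schauder--Tychonoff argument on the closed convex hull of $r(\mathcal{K})\cup\{0\}$; in the second alternative one then gets $u^*=\lambda S(u^*)$ with $\lambda\in(0,1)$ and $p(u^*)=R$, which immediately contradicts $\mathscr{S}\subset\{p<R\}$ without any limiting procedure. Your version is sound, just longer than necessary.
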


Recalling that $\Sigma \bm{u}\in \mathbb{R}^q$ for $0<q\leq m\times n$ represents fractional or nonlocal derivatives in the form $D^\sigma \bm{u}$ or $\mathcal{D}^\sigma \bm{u}$, we observe that for any Lipschitz bounded open set $\mathcal{O}$ such that $\bar{\mathcal{O}}\subset\Omega\subset\mathbb{R}^n$, for every $\bm{v}\in L^2(0,T;\mathbf{H}^\sigma(\mathcal{O}))$, the extension $\tilde{\bm{v}}\in L^2(0,T;\mathbf{H}^\sigma(\mathbb{R}^n))$ and \begin{equation}\label{SigmaNormBounds}\norm{\Sigma\tilde{\bm{v}}}_{L^2(0,T;\mathbf{L}^2(\mathcal{O}))}\leq C\norm{\bm{v}}_{L^2(0,T;\mathbf{H}^\sigma(\mathcal{O}))}\end{equation} for some constant $C$ depending on $\mathcal{O}$. 

\subsection{Proof of Theorem \ref{NonlinNonautonomousProbThmSupercritical}}

Let $(\Omega_k)_k$ be an increasing sequence of open bounded subsets of $\mathbb{R}^n$ with Lipschitz boundaries such that $\overline{\Omega_k}\subset\Omega$ and $\bigcup_{k\in\mathbb{N}}\Omega_k=\Omega$. 
Consider the locally convex space \begin{align*} E&:=L^2(0,T;\mathbf{H}^\sigma_{loc}(\Omega))\\&:=\{\bm{u}\in \mathbf{L}^2_{loc}(]0,T[\times\Omega):\bm{u}|_{]0,T[\times\Omega_k}\in L^2(0,T;\mathbf{H}^\sigma(\Omega_k))\text{ for every }k\in\mathbb{N}\},\end{align*} which is a Fr\'echet space for the sequence of seminorms given by $\norm{\cdot}_{L^2(0,T;\mathbf{H}^\sigma(\Omega_k)}$, $k\in\mathbb{N}$, as defined in \eqref{HsRestrictedNorm} for each $\Omega_k$.

Recall that for a Lipschitz open bounded set $\mathcal{O}\subset\mathbb{R}^n$ (c.f. Theorem 7.26 of \cite{GilbargTrudingerBook}), the Sobolev embedding $\mathbf{H}^{\sigma'}(\mathcal{O})\hookrightarrow \mathbf{H}^\sigma(\mathcal{O})$ is compact for $\sigma<\sigma'$ by the Rellich-Kondrachov theorem. Then, by Aubin-Lions lemma (Lemma II.7.7 of \cite{RoubicekBook}), we have the compact embedding \begin{equation}\label{CompactEmbeddingGeneral}H^1(0,T;\mathbf{L}^2(\mathcal{O}))\cap L^2(0,T;\mathbf{H}^{\sigma'}(\mathcal{O}))\hookrightarrow L^2(0,T;\mathbf{H}^\sigma(\mathcal{O})),\end{equation} and this embedding is continuous by the closed graph theorem. Since $\mathbf{L}^2_{\mathbb{A}}\subset \mathbf{H}^{s+\theta}_0(\Omega)$ by Assumption \eqref{DomainAssump}, applying \eqref{CompactEmbeddingGeneral} for $\sigma'=s+\theta$ for $\theta\geq0$ for the open bounded sets $\Omega_k$, it follows that \begin{equation}\label{MRECptEmbedSupercritical}MR=H^1(0,T;\mathbf{L}^2(\Omega))\cap L^2(0,T;\mathbf{L}^2_{\mathbb{A}})\hookrightarrow L^2(0,T;\mathbf{H}^\sigma_{loc}(\Omega))=E\end{equation} for $\sigma<s+\theta$ is also compact, for any set $\Omega\subseteq\mathbb{R}^n$.

Fix $T$ and $\bm{u}_0\in \mathbf{H}^s_0(\Omega)$. We first show the result for every $k$, i.e. for every $k$, the problem for $\bm{u}=\bm{u}_k$
\begin{align}&\bm{u}\in MR=H^1(0,T;\mathbf{L}^2(\Omega))\cap L^2(0,T;\mathbf{L}^2_{\mathbb{A}}),\nonumber\\& \bm{u}'(t)+\Pi(t,x,\bm{u},\Sigma \bm{u})\mathbb{A}\bm{u}(t)=\chi_{\Omega_k}(x)\bm{f}(t,x,\bm{u},\Sigma \bm{u})\quad\text{ for a.e. }t\in]0,T[,\label{QuasilinNonautonomousProbOmegak}\\&\bm{u}(0)=\bm{u}_0\nonumber\end{align} admits a solution such that for every solution $\bm{u}$, we have \begin{equation}\tag{\ref{EstQuasilinNonautCauchySupercritical}}\norm{\bm{u}}_{MR}\leq c'\left(\norm{F}_{L^2(0,T;L^2(\Omega))}+\norm{\bm{u}_0}_{\mathbf{H}^s_0(\Omega)}\right)\end{equation} for some constant $c'=c'(\underline{\gamma},\bar{\gamma},a_*,a^*,\mu,\Lambda_1,\Lambda_2,T)>0$ independent of $k$. Here $\chi_{\Omega_k}(x)$ is the scalar characteristic function which is 1 if $x\in\Omega_k$ and 0 otherwise.

For each fixed $k\in\mathbb{N}$ and for every $\bm{v}\in E$, we set \[\Pi_{\bm{v}}(t,x):=\Pi(t,x,\bm{v}(t,x),\Sigma \tilde{\bm{v}}(t,x)),\quad\text{ and }\]\[\bm{f}_{\bm{v},k}(t,x):=\chi_{\Omega_k}(x)\bm{f}(t,x,\bm{v}(t,x),\Sigma \tilde{\bm{v}}(t,x)).\] Then, $\Pi_{\bm{v}}$ inherits the same properties as $\Pi$, while $\bm{f}_{\bm{v},k}$ is measurable and satisfies \begin{align*}\norm{\bm{f}_{\bm{v},k}}_{L^2(0,T;\mathbf{L}^2(\Omega))}^2&\leq \int_0^T\int_{\Omega_k} F(t,x)+\Lambda_1^2|\bm{v}|^2+\Lambda_2^2|\Sigma \bm{v}|^2\\&\leq \norm{F}_{L^2(0,T;L^2(\Omega))}^2+\Lambda_1^2\norm{\bm{v}}_{L^2(0,T;\mathbf{L}^2(\Omega_k))}^2+C_k^2\Lambda_2^2\norm{\bm{v}}_{L^2(0,T;\mathbf{H}^\sigma(\Omega_k))}^2<\infty\end{align*} for the same constant $C_k=C(\Omega_k)$ as in \eqref{SigmaNormBounds}.
Then, by Theorem \ref{LinNonautonomousProbThm}, there exists a unique solution $\bm{u}=:\mathcal{T}_k\bm{v}\in MR$ of the problem \begin{align}& \bm{u}'(t)+\Pi_{\bm{v}}(t,\cdot)\mathbb{A}\bm{u}(t)=\bm{f}_{\bm{v},k}(t,\cdot)\quad\text{ for a.e. }t\in]0,T[\label{LimitingProbNonlin},\text{ and }\\&\bm{u}(0)=\bm{u}_0\nonumber\end{align} such that $\bm{u}\in MR$, satisfying the inequality \begin{align}\norm{\bm{u}}_{MR}&\leq c\left(\norm{\bm{f}_{\bm{v},k}}_{L^2(0,T;\mathbf{L}^2(\Omega))}+\norm{\bm{u}_0}_{\mathbf{H}^s_0(\Omega)}\right)\nonumber\\&\leq c_4\left(\norm{F}_{L^2(0,T;L^2(\Omega))}^2+\norm{\bm{v}}_{L^2(0,T;\mathbf{H}^\sigma(\Omega_k))}^2+\norm{\bm{u}_0}_{\mathbf{H}^s_0(\Omega)}\right)\label{EstNonlin1}\end{align} for some constant $c_4=c_4(c,k,\Lambda_1,\Lambda_2)$, where $c$ is the same constant from Theorem \ref{LinNonautonomousProbThm}. In this way, we have defined an operator $\mathcal{T}_k:E\to MR\subset E$.

Next, let $\bm{v}_i\to \bm{v}$ in $E$, and denote $\bm{u}_i=\mathcal{T}_k\bm{v}_i$ and $\bm{u}=\mathcal{T}_k\bm{v}$. We want to show that $\mathcal{T}_k$ is continuous, i.e. $\bm{u}_i\to \bm{u}$ in $E$. Since $(\bm{u}_i)_i$ is bounded in $MR$ by the estimate \eqref{EstNonlin1} which is uniform in $i$ for fixed $k$, and since $MR$ is a Hilbert space, we may assume, after passing to a subsequence, that there exists a $\bm{w}\in E$ such that \begin{equation}\label{ConvgNonlin1}\bm{u}_i\rightharpoonup \bm{w}\quad\text{ in }MR.\end{equation} Passing to a further subsequence, we may in addition assume that \begin{equation}\label{ConvgNonlin2} \bm{u}'_i\rightharpoonup \bm{w}'\quad\text{ in }L^2(0,T;\mathbf{L}^2(\Omega)),\text{ and }\end{equation}\begin{equation}\label{ConvgNonlin3}\mathbb{A}\bm{u}_i\rightharpoonup\mathbb{A}\bm{w}\quad\text{ in }L^2(0,T;\mathbf{L}^2(\Omega)).\end{equation} 
We show that $\bm{w}=\bm{u}$. Since $\bm{v}_i\to \bm{v}$ in $E$, passing to a further subsequence and using a diagonalisation argument, there exists a function $V_k\in L^2(]0,T[\times\Omega_k)$ such that \begin{equation}\label{UnifDomV}\begin{split}(\bm{v}_i,\Sigma \bm{v}_i)\to(\bm{v},\Sigma \bm{v})\quad&\text{ a.e. on }]0,T[\times\Omega,\quad\forall i\in\mathbb{N},\text{ and}\\|\bm{v}_i|+|\Sigma \bm{v}_i|\leq V_k \quad&\text{ a.e. on }]0,T[\times\Omega_k,\quad\forall i\in\mathbb{N}
\end{split}
\end{equation} by the continuity of $\Sigma$ which involves the $\partial$, $D^s$ and $\mathcal{D}^s$ operators. 

By the continuity of $\Pi$ and $\bm{f}$, we have \[\Pi_{\bm{v}_i}(t,x):=\Pi(t,x,\bm{v}_i,\Sigma \bm{v}_i)\to\Pi(t,x,\bm{v},\Sigma \bm{v})=:\Pi_{\bm{v}}(t,x),\quad\text{ and }\]\[\bm{f}_{\bm{v}_i,k}(t,x):=\chi_{\Omega_k}(x)\bm{f}(t,x,\bm{v}_i,\Sigma \bm{v}_i)\to \chi_{\Omega_k}(x)\bm{f}(t,x,\bm{v},\Sigma \bm{v})=:\bm{f}_{\bm{v},k}(t,x)\quad\text{ a.e. on }]0,T[\times\Omega.\]
Moreover, by the growth assumption on $\bm{f}$ in \eqref{fcondNonlin} and uniform domination of $\bm{v}_i$ by $V_k$ in \eqref{UnifDomV}, we have \begin{equation}\label{fdomination}|\bm{f}_{\bm{v}_i,k}|\leq F+(\Lambda_1+\Lambda_2) V_k \quad\text{ a.e. in }]0,T[\times\Omega_k,\quad\forall i\in\mathbb{N}.\end{equation} 
Recall that, for every $i\in\mathbb{N}$, $\bm{u}_i$ satisfies the problem \begin{equation}\label{NonlinProbSeq} \bm{u}'_i+\Pi_{\bm{v}_i}\mathbb{A}\bm{u}_i=\bm{f}_{\bm{v}_i}.\end{equation} By the Dominated Convergence Theorem and \eqref{fdomination}, \[\bm{f}_{\bm{v}_i,k}\to \bm{f}_{\bm{v},k}\quad\text{ strongly in }L^2(0,T;\mathbf{L}^2(\Omega)).\] 
Also, by the Dominated Convergence Theorem, since $\Pi_{\bm{v}_i}$ is uniformly bounded as in \eqref{GammaNonDivCond}, we have, for every $\varphi\in L^2(0,T;\mathbf{L}^2(\Omega))$,  \begin{equation*}\Pi_{\bm{v}_i}^*\varphi\to \Pi_{\bm{v}}^* \varphi\quad\text{ in }L^2(]0,T[\times\Omega).\end{equation*} 
By \eqref{ConvgNonlin3}, it follows that for every $\varphi\in L^2(0,T;\mathbf{L}^2(\Omega))$, \[\int_0^T\int_{\Omega} \Pi_{\bm{v}_i}\mathbb{A}\bm{u}_i\cdot\varphi=\int_0^T\int_{\Omega} \mathbb{A}\bm{u}_i\cdot\Pi_{\bm{v}_i}^*\varphi\to \int_0^T\int_{\Omega} \mathbb{A}\bm{w}\cdot\Pi_{\bm{v}}^*\varphi=\int_0^T\int_{\Omega} \Pi_{\bm{v}}\mathbb{A}\bm{w}\cdot\varphi,\] or equivalently \begin{equation}\label{ConvgNonlin4}\Pi_{\bm{v}_i}\mathbb{A}\bm{u}_i\rightharpoonup \Pi_{\bm{v}} \mathbb{A}\bm{w}\quad\text{ weakly in }L^2(0,T;\mathbf{L}^2(\Omega)).\end{equation} 
Therefore, taking $i\to\infty$ in \eqref{NonlinProbSeq} gives \[\bm{w}'(t)+\Pi_{\bm{v}}\mathbb{A}\bm{w}(t)=\bm{f}_{\bm{v},k}(t)\quad\text{ in }\Omega\text{ for a.e. }t\in]0,T[.\] Since $MR\hookrightarrow C([0,T];\mathbf{H}^s_0(\Omega))$ by Lemma \ref{ProductRuleTime}, the weak convergence of $\bm{w}\rightharpoonup\bm{u}$ in $MR$ gives \[\bm{w}(0)=\lim_{i\to\infty}\bm{u}_i(0)=\bm{u}_0.\] But $\bm{u}$ is also the solution of the problem \eqref{LimitingProbNonlin} which is unique by Theorem \ref{LinNonautonomousProbThm}, so $\bm{w}=\bm{u}$. 

Since $\bm{u}_i\rightharpoonup \bm{u}$ in $MR$, by the compact embedding $MR\hookrightarrow E$, we obtain, passing to a subsequence if necessary, \[\bm{u}_i\to \bm{u}\quad\text{ strongly in }E,\] so $\mathcal{T}_k$ is continuous.

In the next step, we show that there exists a non-negative constant depending on $\underline{\gamma},\bar{\gamma},a_*,a^*,\mu,\Lambda_1,\Lambda_2$ and $T$ independent of $k$ such that for every element $\bm{u}$ in the Schaefer set \[\mathscr{S}_k=\{\bm{v}\in E:\bm{v}=\alpha \mathcal{T}_k \bm{v} \text{ for some }\alpha\in[0,1]\},\] the estimate \eqref{EstQuasilinNonautCauchySupercritical} holds. 

Assume that $\bm{u}=\alpha\mathcal{T}_k(\bm{u})$ for some $\alpha\in[0,1]$, i.e. $\bm{u}$ satisfies \begin{equation}\label{ApproxEqOmegak}\begin{split}& \bm{u}'(t)+\Pi(t,\cdot,\bm{u},\Sigma \bm{u})\mathbb{A}\bm{u}(t)=\alpha \chi_{\Omega_k}(x)\bm{f}(t,\cdot,\bm{u},\Sigma \bm{u})\quad\text{ for a.e. }t\in]0,T[,\text{ and }\\&\bm{u}(0)=\bm{u}_0.\end{split}\end{equation} 

Multiplying the equation by $[\Pi_{\bm{u}}^*]^{-1}\bm{u}'(t)$ and integrating over $\Omega$, we obtain, by Lemma \ref{ProductRuleTime} and the Cauchy-Schwarz inequality, 
\begin{multline*}\int_\Omega [\Pi_{\bm{u}}^*]^{-1} \bm{u}'(t)\cdot \bm{u}'(t)+\frac{1}{2}\frac{d}{dt}\int_\Omega\mathbb{A}\bm{u}\cdot\bm{u}=\int_\Omega [\Pi_{\bm{u}}^*]^{-1} \bm{u}'(t)\cdot \bm{u}'(t)+\int_\Omega\mathbb{A}\bm{u}\cdot\bm{u}'(t)\\=\alpha\int_{\Omega_k} \bm{f}\cdot [\Pi_{\bm{u}}^*]^{-1} \bm{u}'(t)\leq\frac{\bar{\gamma}}{2}\int_\Omega|[\Pi_{\bm{u}}^*]^{-1}\bm{f}|^2+\frac{1}{2\bar{\gamma}}\int_\Omega |\bm{u}'(t)|^2\leq\frac{\bar{\gamma}}{2\underline{\gamma}^2}\int_\Omega|\bm{f}|^2+\frac{1}{2\bar{\gamma}}\int_\Omega |\bm{u}'(t)|^2,\end{multline*}  by the positivity of $\Pi$ and since $\alpha\in[0,1]$.
Making use of the coercivity and boundedness of $\Pi_{\bm{u}}$ in \eqref{GammaNonDivCond}, we integrate over time on $]0,t[$ for every finite $t\in]0,T[$ to obtain, by \eqref{LEllipBdd} and \eqref{L2HsnormIneq}, that  \begin{equation}\label{EstNonlin2}\begin{split}&\,\frac{1}{\bar{\gamma}}\int_0^t\norm{ \bm{u}'(\tau)}_{\mathbf{L}^2(\Omega)}^2\,d\tau+a_*\norm{\bm{u}(t)}_{\mathbf{H}^s_0(\Omega)}^2\\\leq&\, a^*\norm{\bm{u}_0}_{\mathbf{H}^s_0(\Omega)}^2+\mu\norm{\bm{u}(t)}_{\mathbf{L}^2(\Omega)}^2+\frac{1}{2\underline{\gamma}}\int_0^t\norm{\bm{f}(\tau)}_{\mathbf{L}^2(\Omega)}^2\,d\tau\\\leq&\, a^*\norm{\bm{u}_0}_{\mathbf{H}^s_0(\Omega)}^2+2\mu^2\bar{\gamma}\int_0^t\norm{\bm{u}(\tau)}_{\mathbf{L}^2(\Omega)}^2\,d\tau+\frac{1}{2\bar{\gamma}}\int_0^t\norm{ \bm{u}'(\tau)}_{\mathbf{L}^2(\Omega)}^2\,d\tau\\&\quad+\frac{\bar{\gamma}}{2\underline{\gamma}^2}\int_0^t\norm{F(\tau)}_{L^2(\Omega)}^2\,d\tau+\frac{\Lambda_1^2\bar{\gamma}}{2\underline{\gamma}^2}\int_0^t\norm{\bm{u}(\tau)}_{\mathbf{L}^2(\Omega)}^2\,d\tau+\frac{\Lambda_2^2\bar{\gamma}}{2\underline{\gamma}^2}\int_0^t[\bm{u}(\tau)]_{\mathbf{H}^\sigma(\mathbb{R}^n)}^2\,d\tau\\\leq&\, a^*\norm{\bm{u}_0}_{\mathbf{H}^s_0(\Omega)}^2+\frac{\bar{\gamma}}{2\underline{\gamma}^2}\norm{F}_{L^2(0,T;L^2(\Omega))}^2\\&\quad+\left(2\mu^2\bar{\gamma}+\bar{\gamma}\frac{\Lambda_1^2+c_S\Lambda_2^2}{2\underline{\gamma}^2}\right)\int_0^t\norm{\bm{u}(\tau)}_{\mathbf{H}^{s}_0(\Omega)}^2\,d\tau+\frac{1}{2\bar{\gamma}}\int_0^t\norm{ \bm{u}'(\tau)}_{\mathbf{L}^2(\Omega)}^2\,d\tau\end{split}\end{equation} by the Sobolev embedding $\mathbf{H}^\sigma_0(\Omega)\hookrightarrow\mathbf{H}^s_0(\Omega)$ with Sobolev constant $c_S$ for $\sigma\leq s$. Then, applying Gronwall's lemma, we can argue as in the proof of Theorem \ref{LinNonautonomousProbThm} to get the estimate \eqref{EstQuasilinNonautCauchySupercritical} for every $\bm{u}\in\mathscr{S}_k$.

This means that $\mathscr{S}_k$ is bounded in $MR$. By the definition of the $MR$ norm, this implies that there exists an $R>0$ such that \[\mathscr{S}_k\subset\mathcal{C}_k:=\{\bm{v}\in E:\norm{\bm{v}}_{L^2(0,T;\mathbf{H}^\sigma(\Omega_k))}<R\},\] because clearly  $\norm{\cdot}_{L^2(0,T;\mathbf{H}^\sigma(\Omega_k))}\leq\norm{\cdot}_{L^2(0,T;\mathbf{H}^\sigma(\Omega))}$.
It follows from the definition of $\mathcal{T}_k$ and \eqref{EstNonlin1} that $\mathcal{T}_k\mathcal{C}_k$ is contained in a bounded subset of $MR$. By compactness of the embedding \eqref{MRECptEmbedSupercritical}, $\mathcal{T}_k\mathcal{C}_k$ is contained in a compact subset of $E$. Therefore, by Schaefer's fixed point theorem (Theorem \ref{Schaefer}), the mapping $\mathcal{T}_k$ admits a fixed point $\bm{u}$ such that $\bm{u}\in MR$. By the definition of $\mathcal{T}_k$, this element $\bm{u}$ is a solution of the problem \eqref{QuasilinNonautonomousProbOmegak}, and since $\bm{u}\in\mathscr{S}_k$, $\bm{u}$ satisfies \eqref{EstQuasilinNonautCauchySupercritical}.

Finally, we extend the result to show that \eqref{QuasilinNonautonomousProbSupercritical} admits a solution. For every $k\in\mathbb{N}$, we choose a solution $\bm{u}_k$ of the problem \eqref{QuasilinNonautonomousProbOmegak}. Since every such solution is an element of $\mathscr{S}_k$ and satisfies the estimate \eqref{EstQuasilinNonautCauchySupercritical} which is independent of $k$, the sequence $(\bm{u}_k)_k$ is bounded in $MR$. Since $MR$ is a Hilbert space, we may assume (after passing to a subsequence) that there exists a limit $\bm{u}\in E$ such that $\bm{u}_k\rightharpoonup \bm{u}$ in $MR$. By the compactness of the embedding \eqref{MRECptEmbedSupercritical}, passing to a subsequence again if necessary, we obtain, through a diagonalisation argument, that
\begin{equation}\label{OmegakOmegaConvg}\begin{split}&\bm{u}'_k\rightharpoonup \bm{u}'\quad\text{ in }L^2(0,T;\mathbf{L}^2(\Omega)),\\&\mathbb{A}\bm{u}_k\rightharpoonup\mathbb{A}\bm{u}\quad\text{ in }L^2(0,T;\mathbf{L}^2(\Omega)),\\&(\bm{u}_k,\Sigma \bm{u}_k)\to(\bm{u},\Sigma \bm{u})\text{ a.e. on }]0,T[\times\Omega,\text{ and}\\&|\bm{u}_k|+|\Sigma \bm{u}_k|\leq U \quad\text{ a.e. on }]0,T[\times\Omega,\quad\forall k\in\mathbb{N},\end{split}\end{equation} for some $U\in L^2_{loc}(]0,T[\times\Omega)$.

By continuity of $\Pi$ and $\bm{f}$, since $\Omega_k$ is increasing to $\Omega$, \[\Pi(t,x,\bm{u}_k,\Sigma \bm{u}_k)\to\Pi(t,x,\bm{u},\Sigma \bm{u}),\quad\text{ and }\]\[\chi_{\Omega_k}(x)\bm{f}(t,x,\bm{u}_k,\Sigma \bm{u}_k)\to \bm{f}(t,x,\bm{u},\Sigma \bm{u})\text{ a.e. on }]0,T[\times\Omega.\] By the uniform boundedness of $\bm{f}_{\bm{u}_k,k}$ in \eqref{fdomination} and the domination of $\bm{u}_k$ by $U$ in \eqref{OmegakOmegaConvg}, we have \[|\chi_{\Omega_k}(x)\bm{f}(t,x,\bm{u}_k,\Sigma \bm{u}_k)|\leq F+(\Lambda_1+\Lambda_2) U \quad\text{ a.e. on }]0,T[\times\Omega,\quad\forall k\in\mathbb{N}.\] Also, as in \eqref{ConvgNonlin4}, the convergences in \eqref{OmegakOmegaConvg} imply that \begin{equation}\label{ConvgNonlin5}\Pi(t,x,\bm{u}_k,\Sigma \bm{u}_k)\mathbb{A}\bm{u}_k\rightharpoonup \Pi(t,x,\bm{u},\Sigma \bm{u})\mathbb{A}\bm{u}\quad\text{ weakly in }L^2(0,T;\mathbf{L}^2(\Omega)).\end{equation} 
Therefore, \[\chi_{\Omega_k}(x)\bm{f}(t,x,\bm{u}_k,\Sigma \bm{u}_k)=\bm{u}_k'+\Pi(t,x,\bm{u}_k,\Sigma \bm{u}_k)\mathbb{A}\bm{u}_k\text{ converges weakly in }L^2(0,T;\mathbf{L}^2(\Omega)).\] 
On the other hand, for every $\varphi\in L^2(0,T;\mathbf{C}_c(\Omega))$ compactly supported in $]0,T[\times\Omega$, we have \[\int_0^T\int_{\Omega}\chi_{\Omega_k}\bm{f}(t,x,\bm{u}_k,\Sigma \bm{u}_k)\cdot\varphi\to\int_0^T\int_\Omega \bm{f}(t,x,\bm{u},\Sigma \bm{u})\cdot\varphi\] by the dominated convergence theorem. 
Since compactly supported functions are dense in $L^2(0,T;\mathbf{L}^2(\Omega))$, we have the weak convergence \[\chi_{\Omega_k}\bm{f}(t,x,\bm{u}_k,\Sigma \bm{u}_k)\rightharpoonup \chi_{\Omega}\bm{f}(t,x,\bm{u},\Sigma \bm{u})= \bm{f}(t,x,\bm{u},\Sigma \bm{u})\quad\text{ weakly in }L^2(0,T;\mathbf{L}^2(\Omega)).\] Letting $k\to\infty$ in the problem \eqref{QuasilinNonautonomousProbOmegak}, we therefore obtain that $\bm{u}$ satisfies the original problem \[\bm{u}'+\Pi(t,x,\bm{u},\Sigma \bm{u})\mathbb{A}\bm{u}=\bm{f}(t,x,\bm{u},\Sigma \bm{u})\quad\text{ in }\Omega \text{ for a.e. }t\in]0,T[.\] Furthermore, invoking the continuity $MR\hookrightarrow C([0,T];\mathbf{H}^s_0(\Omega))$ by Lemma \ref{ProductRuleTime} as before, $\bm{u}_k(0)\rightharpoonup \bm{u}(0)$ in $\mathbf{H}^s_0(\Omega)$, so $\bm{u}(0)=\bm{u}_0$. Thus, $\bm{u}$ is a solution to the problem \eqref{EstQuasilinNonautCauchySupercritical}. Furthermore, since the estimate \eqref{EstNonlin2} is independent of $k$, we can pass to the limit to obtain the estimate \eqref{EstQuasilinNonautCauchySupercritical}.

\begin{remark}
It is also possible to consider a different nonlocal vectorial operator $\mathbb{A}\bm{u}=(\mathbb{A}_1u^1,\dots,\mathbb{A}_mu^m)$ for each equation in the system 
\begin{equation*}\bm{u}'+
\Pi(t,x,\bm{u},\Sigma \bm{u})\mathbb{A}\bm{u}=\bm{f}(t,x,\bm{u},\Sigma \bm{u})
\quad\text{ in }]0,T[\times\Omega,\end{equation*}
for $\mathbb{A}_i$ given by (possibly different) scalar operators satisfying \eqref{LEllipBdd}, which may be of the form \eqref{ClassicalDeriv}, \eqref{FracOp} or \eqref{NonlocalOp}, and $\Pi$ satisfy the same assumptions.
\end{remark}

\begin{remark}\label{InhomogDirichletBdryCond}
The results in Theorem \ref{NonlinNonautonomousProbThmSupercritical} can in fact be extended to the inhomogeneous Dirichlet boundary problem $\bm{u}=\bm{g}$ in $]0,T[\times\Omega^c$.

Indeed, writing $MR(\mathbb{R}^n)$ for \[MR(\mathbb{R}^n):=H^1(0,T;\mathbf{L}^2(\mathbb{R}^n))\cap \{\bm{u}\in \mathbf{H}^s(\mathbb{R}^n):\mathbb{A}\bm{u}\in \mathbf{L}^2(\mathbb{R}^n)\},\] let $\bm{g}\in MR(\mathbb{R}^n)\cap L^2(0,T;\mathbf{H}^{s+\theta}(\mathbb{R}^n))\cap C([0,T];\mathbf{H}^s(\mathbb{R}^n))$, such that $\bm{g}(0)\in \mathbf{H}^s(\mathbb{R}^n)$. Considering $\bar{\bm{u}}=\bm{u}-\bm{g}$, we can solve the problem for  $\bar{\bm{u}}\in MR(\Omega)$, for the corresponding translated problem.
\end{remark}

\subsection{Examples}

\subsubsection{Quasilinear System with the Classical Laplacian}\label{subsect:MRQuasiLinearAppClassical}

As a first example, we consider the classical Laplacian $\Delta$ in the case of $s=1$ as in Example 5.1 of \cite{ArendtChill}, extended to the case of a system of equations.

Considering the vectorial Laplacian $(-\Delta)_m$ defined by 
\[(-\Delta)_m=\begin{bmatrix}
-c_1\Delta &  & 0\\
 & \ddots & \\
0 &  & -c_m\Delta
\end{bmatrix}\]
for constants $c_1,\cdots,c_m>0$. Then, applying Theorem \ref{NonlinNonautonomousProbThmSupercritical} for $\mathbb{A}=(-\Delta)_m$, we have

\begin{corollary}
Suppose $\Pi$ and $\bm{f}$ satisfy the assumptions of Theorem \ref{NonlinNonautonomousProbThmSupercritical} with $\Omega$ being an open bounded Lipschitz domain. Then, writing for the gradient $\partial=(\partial_1,\dots,\partial_n)$, for every $\bm{u}_0\in\mathbf{H}^s_0(\Omega)$, the nonlinear problem given by 
\begin{equation*}\begin{cases}\bm{u}'(t)+\Pi(t,x,\bm{u},\partial \bm{u})(-\Delta)_m\bm{u}(t)=\bm{f}(t,x,\bm{u},\partial \bm{u})\quad\text{ for a.e. }t\in]0,T[,\\\bm{u}=\bm{0}\text{ a.e. on }]0,T[\times\partial\Omega,\quad\quad\bm{u}(0,\cdot)=\bm{u}_0(\cdot)\text{ a.e. in }\Omega\end{cases}\end{equation*} has a solution $\bm{u}\in H^1(0,T;\mathbf{L}^2(\Omega))\cap L^2(0,T;\mathbf{H}^2(\Omega))\cap C([0,T];\mathbf{H}^1_0(\Omega))$ for any $T\in]0,\infty[$. 
Moreover, there exists a constant $c'=c'(\underline{\gamma},\bar{\gamma},\Lambda_1,\Lambda_2,T)>0$ such that every solution $\bm{u}$ satisfies \begin{equation*}\norm{\bm{u}}_{MR}\leq c'\left(\norm{F}_{L^2(0,T;L^2(\Omega))}+\norm{\bm{u}_0}_{\mathbf{H}^1_0(\Omega)}\right).\end{equation*}

\end{corollary}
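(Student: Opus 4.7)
The plan is to invoke Theorem \ref{NonlinNonautonomousProbThmSupercritical} directly, with parameters $s=1$, $\mathbb{A}=(-\Delta)_m$, and $\Sigma=\partial$ (so $\sigma=1$), and then identify the domain $\mathbf{L}^2_{(-\Delta)_m}$ with $\mathbf{H}^2(\Omega)\cap \mathbf{H}^1_0(\Omega)$ via elliptic regularity for the constant-coefficient diagonal system.

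First I would verify the structural hypotheses \eqref{LEllipBdd}. The vectorial Laplacian $(-\Delta)_m$ is symmetric and time-independent, and its quadratic form is $\langle(-\Delta)_m\bm{u},\bm{u}\rangle=\sum_i c_i\|\nabla u^i\|_{L^2(\Omega)}^2$, which gives the continuity and coercivity bounds with $a^*=\max_i c_i$, $a_*=\min_i c_i$, and $\mu=0$ by Poincaré's inequality on the bounded $\Omega$. Next, for the domain inclusion \eqref{DomainAssump}, I would apply Proposition \ref{ClassicalEllipRegThmVectorial} component-wise: the diagonal constant tensor $A^{\alpha\beta}_{ij}=c_i\delta_{ij}\delta_{\alpha\beta}$ trivially satisfies the Legendre–Hadamard condition \eqref{LegendreHadamard} and lies in $C^{0,1}(\bar\Omega)$, so the proposition yields $\mathbf{L}^2_{(-\Delta)_m}\subset \mathbf{H}^2_{\mathrm{loc}}(\Omega)$, i.e.\ \eqref{DomainAssump} with $\theta=1$. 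The admissibility condition $0<\sigma<s+\theta=2$ is met for the classical gradient ($\sigma=1$), and the local Sobolev continuity \eqref{SigmaNormBounds} is available on every $\Omega_k\Subset\Omega$.

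With these checks in place, Theorem \ref{NonlinNonautonomousProbThmSupercritical} produces a solution
\[
\bm{u}\in H^1(0,T;\mathbf{L}^2(\Omega))\cap L^2(0,T;\mathbf{L}^2_{(-\Delta)_m})\cap C([0,T];\mathbf{H}^1_0(\Omega))
\]
together with the required $MR$-estimate, which immediately reduces to the displayed bound for the present parameters. The only step that is not purely routine is the identification of $\mathbf{L}^2_{(-\Delta)_m}$ with $\mathbf{H}^2(\Omega)\cap \mathbf{H}^1_0(\Omega)$ globally up to the boundary, so as to conclude that $\bm{u}\in L^2(0,T;\mathbf{H}^2(\Omega))$; for this I would invoke the global part of Proposition \ref{ClassicalEllipRegThmVectorial}, which provides $\mathbf{H}^2$ regularity for the Dirichlet problem of the constant-coefficient system. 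Strictly this uses at least a $C^{1,1}$ boundary (convex Lipschitz also suffices for the pure Laplacian); under merely Lipschitz $\partial\Omega$, only the interior version $\bm{u}\in L^2(0,T;\mathbf{H}^2_{\mathrm{loc}}(\Omega))$ is automatic, while the abstract existence and the $MR$-estimate persist without any additional boundary regularity.
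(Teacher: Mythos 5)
Your proposal is correct and takes essentially the same route as the paper: the corollary is obtained by applying Theorem \ref{NonlinNonautonomousProbThmSupercritical} with $s=\sigma=1$, $\mathbb{A}=(-\Delta)_m$ (so \eqref{LEllipBdd} holds with $\mu=0$ by Poincar\'e and \eqref{DomainAssump} holds with $\theta=1$ by Proposition \ref{ClassicalEllipRegThmVectorial}), and then identifying $\mathbf{L}^2_{(-\Delta)_m}$ with an $\mathbf{H}^2$-type space via elliptic regularity. Your caveat that the global identification $\mathbf{L}^2_{(-\Delta)_m}\subset\mathbf{H}^2(\Omega)$ requires more than a Lipschitz boundary (e.g. $C^{1,1}$ or convexity), with only $\mathbf{H}^2_{loc}$ available otherwise, is a fair and accurate remark about the statement itself, which the paper does not address.
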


In particular, this extends the results of \cite{ArendtChill} to system of equations.

\subsubsection{Approximation Models for Interacting Nonlocal Diffusive Species Populations}\label{subsect:MRQuasiLinearAppPopulation}
We also consider a nonlocal version of cross-diffusive systems modelling two interacting species, 
given for $0<s\leq1$ by \begin{align}&\begin{aligned}u'=-D_1(u,v,\Sigma u, \Sigma v)(-\Delta)^su+R_1(u,v,\Sigma u, \Sigma v),\\ v'=-D_2(u,v,\Sigma u, \Sigma v)(-\Delta)^sv+R_2(u,v,\Sigma u, \Sigma v),\end{aligned}\quad &x\in \Omega,t>0\nonumber\\&u(0,x)=u_0(x),\quad v(0,x)=v_0(x),\quad &x\in\Omega,\label{CrossDiffEq}\\&u(t,x)=v(t,x)=0\quad &x\in\Omega^c,t>0,\nonumber
\end{align} where $\Sigma$ has order $\sigma$ with $0<\sigma\leq s<1$. The diffusion coefficients $D_1$ and $D_2$ are bounded and strictly positive, describing a controlled nonlocal nonlinear spreading of the biological population which is dependent both on the size and density of the species itself and the other species. The interaction between the two species is both in terms of space competition with regard to diffusion, as well as in the linear bounded reaction terms $R_1(u,v)$ and $R_2(u,v)$.
Here, the spreading can be represented by any of the operators in Section \ref{sect:MRQuasiLinear}, and can be both nonlocal, as described by the nonlocal operator $\mathbb{L}^s_A$, or local, given by the classical operator $\mathbb{L}$.
Such systems with constant diffusion coefficients may appear in activator-inhibitor systems with linear or sublinear kinetic functions (see, for instance, Chapter 9 of \cite{Yagi2010BookParabolicEvolEqs}).

This model can also be obtained as an approximation of Lotka–Volterra-type models, where the reaction terms are obtained from linearising quadratic terms describing predator-prey, competition or cooperation interactions. For instance, the Shigesada-Kawasaki-Teramoto system (see, for instance, \cite{LiZhao2005DCDSCrossDiffGlobal} and \cite{ChenJungel2006JDECrossDiff}) given by \[\bm{u}'+D(-\Delta)^s_2\bm{u}=R\quad x\in\Omega\] for $\bm{u}=(u_1,u_2)$ with diffusion matrix
\[D=\begin{bmatrix}
\begin{gathered}d_1+q\rho_{11}u_1^q+\rho_{12}u_2^q\\+q\rho_{13}(\mathcal{D}^su_1)^q+\rho_{14}(\mathcal{D}^su_2)^q\end{gathered} & q\rho_{12}u_1u_2^{q-1}\\ \\
q\rho_{21}u_1^{q-1}u_2 & \begin{gathered}d_2+q\rho_{22}u_2^q+\rho_{21}u_1^q\\+q\rho_{23}(\mathcal{D}^su_2)^q+\rho_{24}(\mathcal{D}^su_1)^q\end{gathered}
\end{bmatrix}\] and reaction term \[R=(R_1,R_2),\quad R_i=(a_{1,i}-b_{1,i}u_1-c_{1,i}u_2)u_i+(a_{2,i}-b_{2,i}\mathcal{D}^su_1-c_{2,i}\mathcal{D}^su_2)\mathcal{D}^su_i\]
can be approximated via the logistic function, which is a bounded nonlinearity, \[u_i\sim \frac{u_i}{1+\epsilon|u_i|}=:\tilde{u}_i\quad \epsilon>0\]\[\mathcal{D}^su_i\sim \frac{\mathcal{D}^su_i}{1+\epsilon|\mathcal{D}^su_i|}=:\widetilde{\mathcal{D}^su_i}\quad \epsilon>0\] to obtain the system \[\bm{u}'+D_{approx}(-\Delta)^s\bm{u}=R_{approx}\quad x\in\Omega\] where $D_{approx}$ is of the form 
\[D_{approx}=\begin{bmatrix}
\begin{gathered}d_1+q\rho_{11}\tilde{u}_1^q+\rho_{12}\tilde{u}_2^q\\+q\rho_{13}(\widetilde{\mathcal{D}^su_1})^q+\rho_{14}(\widetilde{\mathcal{D}^su_2})^q\end{gathered} & q\rho_{12}\tilde{u}_1\tilde{u}_2^{q-1}\\ \\
q\rho_{21}\tilde{u}_1^{q-1}\tilde{u}_2 & \begin{gathered}d_2+q\rho_{22}\tilde{u}_2^q+\rho_{21}\tilde{u}_1^q\\+q\rho_{23}(\widetilde{\mathcal{D}^su_2})^q+\rho_{24}(\widetilde{\mathcal{D}^su_1})^q\end{gathered}
\end{bmatrix}\] 
and 
\[R_{approx}=\begin{bmatrix}
(a_{1,1}-b_{1,1}\tilde{u}_1-c_{1,1}\tilde{u}_2)u_1+(a_{2,1}-b_{2,1}\widetilde{\mathcal{D}^su_1}-c_{2,1}\widetilde{\mathcal{D}^su_2})\mathcal{D}^su_1\\
(a_{1,2}-b_{1,2}\tilde{u}_1-c_{1,2}\tilde{u}_2)u_2+(a_{2,2}-b_{2,2}\widetilde{\mathcal{D}^su_1}-c_{2,2}\widetilde{\mathcal{D}^su_2})\mathcal{D}^su_2
\end{bmatrix}\] 
so that $D_{approx}$ is bounded, under appropriate assumptions on $\rho_{ij}$ and $\bm{u}$, and $R_{approx}$ has a linear growth on $u_1$ and $u_2$ for any fixed $\epsilon$, fulfilling our assumptions.

Supposing $D_{approx}$ satisfies \eqref{GammaNonDivCond}, which is obtained by taking the sufficient assumption that $u_1,u_2$ are positive and that the diffusion coefficients $\rho_{ij}>0$ are bounded, and the derivatives are of order $s$, 
by Theorem \ref{NonlinNonautonomousProbThmSupercritical}, this problem admits a global solution $\bm{u}$ in \[\bm{u}\in H^1(0,T;\mathbf{L}^2(\Omega))\cap L^2(0,T;\mathbf{H}^{s'}(\Omega))\cap C([0,T];\mathbf{H}^s_0(\Omega)),\] where $s'=\min\{2s,s+\frac12\}-\epsilon$ for any $\epsilon>0$. This means that we can consider a more general case of cross-diffusion involving nonlocal operators, and obtaining a regularity result that is comparable to the classical cases in Theorem B of \cite{LiZhao2005DCDSCrossDiffGlobal} and Theorem 1.3 of \cite{BendahmaneLanglais2010JEvolEqCrossDiffLinearizedEq1.10}.

\section{The Nonlinear Problem $s<\sigma<2s\leq2$ with $\Omega$ Bounded
}\label{sect:MRQuasiSigmaBig} 

In this section, we want to further extend the result to higher order derivatives $\sigma>s>0$. In particular, $\sigma$ may be greater than 1, generalising the scalar quasilinear diffusion equations in the classical case in \cite{ArendtChill}. Here, we focus on the classical elliptic operator $\mathbb{L}$ as given in Example 1 of Section \ref{sect:MRQuasiLinear} defined by \eqref{ClassicalDeriv}, as well as the nonlocal fractional Laplacian defined in Example 2 of Section \ref{sect:MRQuasiLinear} defined by \eqref{FracLap}, since we have additional regularity results for those cases. Then, by the results of \cite{DongKim2011ARMALpParabolicEllipticSystemsBMO}, and \cite{GrubbHormander} and \cite{borthagaray2021besov}, we know that there exists a unique solution to the Dirichlet problem associated with $\mathbb{L}$ and with $(-\Delta)^s$, given by Propositions \ref{ClassicalEllipRegThmVectorial} and \ref{BNochettoFracLapReg} respectively. Therefore, the spaces $\mathbf{L}^2_{\mathbb{L}}$ and $\mathbf{L}^2_{(-\Delta)^s_m}$ make sense. Furthermore, it is clear that $\mathbb{L}$ and $(-\Delta)^s_m$ are bounded and $\mathbf{L}^2(\Omega)$-coercive. 

We first recall the following Poincar\'e inequality concerning the embedding of $\mathbf{L}^2(\Omega)$ in $\mathbf{H}^s_0(\Omega)$. See, for instance, Theorem 2.2 of \cite{BellidoCuetoMoraCorral2020PiolaVector}.
\begin{lemma}[Poincar\'e inequality]\label{Poincare}
Let $s\in]0,1]$. Then for any open bounded set $\Omega\subset\mathbb{R}^n$, there exists a constant $c_P>0$ depending only on $\Omega$, $n$ and $s$ such that \[c_P\norm{\bm{u}}_{\mathbf{L}^2(\Omega)}\leq\norm{D^s\bm{u}}_{L^2(\mathbb{R}^n)}^2.\] for all $\bm{u}\in \mathbf{H}^s_0(\Omega)$. In particular, we have the equivalence of the norms $\norm{\cdot}_{\mathbf{L}^2(\Omega)}$ and $\norm{\cdot}_{\mathbf{H}^s_0(\Omega)}$.
\end{lemma}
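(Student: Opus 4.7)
The plan is to reduce this to a standard fractional Poincar\'e inequality via the identity \eqref{HsEquivNorm}, and then prove the latter by a contradiction/compactness argument. Note that \eqref{HsEquivNorm} tells us that, for any $\bm{u}\in \mathbf{H}^s_0(\Omega)$ extended by zero outside $\Omega$,
\[
\tfrac{2}{c_{n,s}^2}\norm{D^s\bm{u}}_{L^2(\mathbb{R}^n)}^2 = [\bm{u}]_{H^s(\mathbb{R}^n)}^2
= \int_{\mathbb{R}^n}\int_{\mathbb{R}^n}\frac{|\bm{u}(x)-\bm{u}(y)|^2}{|x-y|^{n+2s}}\,dx\,dy,
\]
so it suffices to establish $\norm{\bm{u}}_{\mathbf{L}^2(\Omega)}\le C(\Omega,n,s)\,[\bm{u}]_{H^s(\mathbb{R}^n)}$ componentwise. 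The case $s=1$ is the classical Poincar\'e inequality for $H^1_0(\Omega)$ on bounded $\Omega$ (extending by zero we recover the usual gradient seminorm), so I would concentrate on $0<s<1$.

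For the fractional range, I would argue by contradiction. Assume no such $c_P$ exists; then there is a sequence $(\bm{u}_k)\subset \mathbf{H}^s_0(\Omega)$ with $\norm{\bm{u}_k}_{\mathbf{L}^2(\Omega)}=1$ and $[\bm{u}_k]_{H^s(\mathbb{R}^n)}\to 0$. In particular $(\bm{u}_k)$ is bounded in $\mathbf{H}^s_0(\Omega)$, so by the Rellich--Kondrachov compactness of the embedding $\mathbf{H}^s_0(\Omega)\hookrightarrow \mathbf{L}^2(\Omega)$ for bounded $\Omega$ (which is available for $H^s_0$ with zero extension to $\mathbb{R}^n$, independent of boundary regularity), a subsequence converges strongly in $\mathbf{L}^2(\Omega)$ to some $\bm{u}$ with $\norm{\bm{u}}_{\mathbf{L}^2(\Omega)}=1$ and $\mathrm{supp}\,\bm{u}\subset\overline{\Omega}$.

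By Fatou's lemma applied to the double integral, $[\bm{u}]_{H^s(\mathbb{R}^n)}\le \liminf_k [\bm{u}_k]_{H^s(\mathbb{R}^n)}=0$, so $\bm{u}$ is (componentwise) constant on $\mathbb{R}^n$. Since $\bm{u}$ vanishes on the complement of the bounded set $\Omega$, this forces $\bm{u}=\bm{0}$, contradicting $\norm{\bm{u}}_{\mathbf{L}^2(\Omega)}=1$. Combining this with the identity above yields the stated inequality, and the equivalence of the norms $\norm{\cdot}_{\mathbf{L}^2(\Omega)}$ and $\norm{\cdot}_{\mathbf{H}^s_0(\Omega)}$ on $\mathbf{H}^s_0(\Omega)$ follows directly from the definition
\[
\norm{\bm{u}}_{\mathbf{H}^s_0(\Omega)}^2=\norm{\bm{u}}_{\mathbf{L}^2(\Omega)}^2+\tfrac{2}{c_{n,s}^2}\norm{D^s\bm{u}}_{L^2(\mathbb{R}^n)}^2,
\]
since now each summand on the right is controlled by $\norm{D^s\bm{u}}_{L^2(\mathbb{R}^n)}^2$.

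The only genuinely delicate point is the use of Rellich--Kondrachov compactness on a general open bounded $\Omega$ without boundary regularity; however, zero-extending to any bounded Lipschitz $\Omega'\supset\overline{\Omega}$ reduces this to the standard compact embedding on $\Omega'$, so this obstacle is readily circumvented. Everything else is a routine compactness and Fatou argument.
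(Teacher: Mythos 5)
Your proof is correct, but it does not follow the paper's route, because the paper does not prove Lemma \ref{Poincare} at all: it simply invokes Theorem 2.2 of Bellido, Cueto and Mora-Corral, where the fractional Poincar\'e inequality is obtained directly (via a representation of $u$ in terms of $D^s u$), with a constant that can be tracked explicitly in terms of $\Omega$, $n$ and $s$. Your compactness/contradiction argument is a legitimate self-contained alternative: the reduction through \eqref{HsEquivNorm} to the Gagliardo seminorm of the zero extension is exactly the identity the paper records; the Rellich-type compactness you need is even simpler than your workaround suggests, since the zero extensions are bounded in $H^s(\mathbb{R}^n)$ and supported in a fixed bounded set, so precompactness in $L^2$ follows without any extension operator or auxiliary Lipschitz domain; and the Fatou step together with the fact that a function with vanishing Gagliardo seminorm is a.e.\ constant, hence zero when it vanishes outside the bounded set $\Omega$, closes the contradiction, with $s=1$ handled by the classical Poincar\'e inequality as you say. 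What you lose relative to the cited proof is constructiveness: the compactness argument produces no explicit $c_P$, whereas the direct potential-theoretic proof does; what you gain is that the argument visibly works for any bounded open $\Omega$ with no boundary regularity, matching the generality claimed in the lemma. Two points worth flagging about the statement itself: the displayed inequality has mismatched powers (it should be read with $\norm{\bm{u}}_{\mathbf{L}^2(\Omega)}^2$ on the left, which is the version you actually prove), and the final clause cannot literally mean that $\norm{\cdot}_{\mathbf{L}^2(\Omega)}$ and $\norm{\cdot}_{\mathbf{H}^s_0(\Omega)}$ are equivalent on $\mathbf{H}^s_0(\Omega)$ (that would contradict compactness of the embedding); the intended meaning, which your last display delivers, is that the seminorm $\norm{D^s\cdot}_{L^2(\mathbb{R}^n)}$ is equivalent to the full $\mathbf{H}^s_0(\Omega)$ norm.
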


Assume $\mathbb{A}$ satisfies $\mathbf{L}^2_{\mathbb{A}}:=\{\bm{u}\in \mathbf{H}^s_0(\Omega):\mathbb{A}\bm{u}\in \mathbf{L}^2(\Omega)\}\subset \mathbf{H}^{\sigma'}(\Omega)$ for some $s<\sigma'<2s$ for $\Omega$ bounded and Lipschitz domain, i.e. there exists a constant constant $C_\mathbb{A}>0$ and $\mu'\geq0$ such that \begin{equation}\label{OpRegBound}\norm{\bm{u}}_{L^2(0,T;\mathbf{H}^{\sigma'}(\Omega))}\leq C_\mathbb{A}\norm{\mathbb{A}\bm{u}}_{L^2(0,T;\mathbf{L}^2(\Omega))}+\mu'\norm{\bm{u}}_{L^2(0,T;\mathbf{L}^2(\Omega))}.\end{equation} 
In particular, $\sigma'=2$ for $\mathbb{A}=\mathbb{L}$, and $\sigma'=\min\{2s,s+\frac12\}$ for $\mathbb{A}=(-\Delta)^s_m$ where $0<s<1$. Therefore, applying the compact embedding \eqref{CompactEmbeddingGeneral}, we obtain that \begin{equation}\label{MRECptEmbedBigSigma}MR=H^1(0,T;\mathbf{L}^2(\Omega))\cap L^2(0,T;\mathbf{L}^2_{\mathbb{A}})\hookrightarrow L^2(0,T;\mathbf{H}^\sigma(\Omega))=\tilde{E}\end{equation} is compact for any open Lipschitz bounded set $\Omega\subseteq\mathbb{R}^n$, for any $\sigma<\sigma'$.

Also, by the Sobolev embeddings, there exists a Sobolev constant $0<c_S<1$ depending on $s<\sigma'<2s$, $\sigma$ and $\Omega$, 
such that \begin{equation}\label{SobolevCompareNorm}c_S\norm{\bm{v}}_{L^2(0,T;\mathbf{H}^{\sigma}(\Omega))}^2\leq\norm{\bm{v}}_{L^2(0,T;\mathbf{H}^{\sigma'}(\Omega))}^2\quad\forall\bm{v}\in L^2(0,T;\mathbf{H}^{\sigma'}(\Omega)).\end{equation}

Then, assuming  $\bm{f}:]0,T[\times\Omega\times\mathbb{R}^m\times\mathbb{R}^q\to\mathbb{R}^m$ satisfy the assumptions of Theorem \ref{NonlinNonautonomousProbThmSupercritical} such that 
\begin{equation}\label{fcondNonlinBigSigmaSublinGrowth}|\bm{f}(t,x,\bm{u},\bm{p})|\leq F(t,x)+\Lambda_1|\bm{u}|+\Lambda_2|\bm{p}|^\alpha \end{equation} for some $F\in L^2(0,T;L^2(\Omega))$, $\Lambda_1,\Lambda_2\geq0$, such that either
\begin{enumerate}[label=(\roman*)]
    \item $0<\alpha<1$, or
    \item $\alpha=1$ with 
\begin{equation}\label{fcondNonlinBigSigma}0<\Lambda_2\leq \underline{\gamma}\sqrt{\frac{c_S}{C_\mathbb{A}}},\end{equation}
\end{enumerate}
we have the following result:

\begin{theorem}\label{NonlinNonautonomousProbThmBigSigma} 
Suppose $\Omega\subset\mathbb{R}^n$ is a Lipschitz bounded open set. 
Let $\Pi:]0,T[\times\Omega\times\mathbb{R}^m\times\mathbb{R}^q\to\mathbb{R}^{m\times m}$ satisfy the assumptions of Theorem \ref{NonlinNonautonomousProbThmSupercritical}, and $\bm{f}:]0,T[\times\Omega\times\mathbb{R}^m\times\mathbb{R}^q\to\mathbb{R}^m$ satisfy the assumptions \eqref{fcondNonlinBigSigmaSublinGrowth} above for either condition (i) or (ii).
Suppose $\mathbb{A}$ satisfies \eqref{OpRegBound} for some $s<\sigma'<2s\leq2$.
Then, for any $\sigma<\sigma'$ and every $\bm{u}_0$ such that $\bm{u}_0\in \mathbf{H}^s_0(\Omega)\cap\mathbf{H}^\sigma(\Omega)$, there exists \begin{equation}\label{QuasilinNonautonomousBigSigmaSolnReg}\bm{u}\in H^1(0,T;\mathbf{L}^2(\Omega))\cap L^2(0,T;\mathbf{L}^2_{\mathbb{A}})\cap L^2(0,T;\mathbf{H}^\sigma(\Omega))\cap C([0,T];\mathbf{H}^s_0(\Omega))\end{equation} solving the problem \begin{equation}\label{QuasilinNonautonomousProbBigSigma}\begin{split}& \bm{u}'(t)+\Pi(t,x,\bm{u},\Sigma \bm{u})\mathbb{A}\bm{u}(t)=\bm{f}(t,x,\bm{u},\Sigma \bm{u})\quad\text{ for a.e. }t\in]0,T[,\\&\bm{u}(0)=\bm{u}_0,\end{split}\end{equation} where $\Sigma$ represents fractional derivatives of order $\sigma$ which may be greater than 1.  Moreover, there exists a constant $c''=c''(\Omega,\underline{\gamma},\bar{\gamma},a_*,a^*,C_\mathbb{A},\Lambda_1,\Lambda_2,T,\alpha)>0$ such that for every solution $\bm{u}$ of \eqref{QuasilinNonautonomousProbBigSigma}, \begin{equation}\label{EstQuasilinNonautCauchyBigSigma}\norm{\bm{u}}_{MR}\leq c''\left(\norm{F}_{L^2(0,T;L^2(\Omega))}+\norm{\bm{u}_0}_{\mathbf{H}^\sigma_0(\Omega)}\right).\end{equation}
\end{theorem}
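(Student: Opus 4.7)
The plan is to mimic the Schaefer fixed point argument from the proof of Theorem \ref{NonlinNonautonomousProbThmSupercritical}, with two simplifications and one new technical twist. Since $\Omega$ is now bounded and Lipschitz, I would work with the single Banach space $\tilde{E}=L^2(0,T;\mathbf{H}^\sigma(\Omega))$ in place of the Fr\'echet space $E$ and the exhausting sequence $(\Omega_k)$; the compact embedding $MR\hookrightarrow \tilde{E}$ is furnished by \eqref{MRECptEmbedBigSigma}. I would define $\mathcal{T}:\tilde{E}\to MR\subset\tilde{E}$ by sending $\bm{v}$ to the unique solution $\bm{u}$ of the linear non-autonomous problem $\bm{u}'+\Pi_{\bm{v}}\mathbb{A}\bm{u}=\bm{f}_{\bm{v}}$, $\bm{u}(0)=\bm{u}_0$, provided by Theorem \ref{LinNonautonomousProbThm}. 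The continuity of $\mathcal{T}$ follows as in Theorem \ref{NonlinNonautonomousProbThmSupercritical}: from the estimate \eqref{EstLinNonautCauchy} extract a subsequence $(\mathcal{T}\bm{v}_i)$ weakly convergent in $MR$, identify the limit by dominated convergence (using the growth \eqref{fcondNonlinBigSigmaSublinGrowth}) and uniqueness from Theorem \ref{LinNonautonomousProbThm}, then upgrade to strong convergence in $\tilde{E}$ by the compact embedding \eqref{MRECptEmbedBigSigma}.

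The core step is the uniform a priori bound on the Schaefer set $\mathscr{S}=\{\bm{u}\in\tilde{E}:\bm{u}=\lambda\mathcal{T}\bm{u},\ \lambda\in[0,1]\}$. For $\bm{u}\in\mathscr{S}$, test the equation $\bm{u}'+\Pi_{\bm{u}}\mathbb{A}\bm{u}=\lambda\bm{f}_{\bm{u}}$ against $[\Pi_{\bm{u}}^*]^{-1}\bm{u}'$, invoke Lemma \ref{ProductRuleTime} together with Cauchy--Schwarz, and integrate over $[0,t]$. Using \eqref{LEllipBdd}, \eqref{GammaNonDivCond} and \eqref{L2HsnormIneq} exactly as in the proof of Theorem \ref{NonlinNonautonomousProbThmSupercritical} produces an energy inequality of the form \eqref{EstNonlin2}, in which the right-hand side contains $\|\bm{f}\|_{\mathbf{L}^2}^2$. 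The new difficulty is that $\bm{f}$ involves $|\Sigma\bm{u}|^\alpha$ with $\sigma>s$, so $\|\Sigma\bm{u}\|_{\mathbf{L}^2}$ can no longer be absorbed into $\|\bm{u}\|_{\mathbf{H}^s_0}$ via the Sobolev embedding used before. Instead I would use the chain
\begin{equation*}
\|\Sigma\bm{u}\|_{L^2(0,T;\mathbf{L}^2)}\leq\|\bm{u}\|_{L^2(0,T;\mathbf{H}^\sigma)}\leq c_S^{-1/2}\|\bm{u}\|_{L^2(0,T;\mathbf{H}^{\sigma'})}\leq c_S^{-1/2}\bigl(C_\mathbb{A}\|\mathbb{A}\bm{u}\|_{L^2(0,T;\mathbf{L}^2)}+\mu'\|\bm{u}\|_{L^2(0,T;\mathbf{L}^2)}\bigr)
\end{equation*}
from \eqref{SobolevCompareNorm} and \eqref{OpRegBound}, combined with the PDE identity $\underline\gamma\|\mathbb{A}\bm{u}\|_{\mathbf{L}^2}\leq\|\Pi_{\bm{u}}\mathbb{A}\bm{u}\|_{\mathbf{L}^2}=\|\bm{u}'-\lambda\bm{f}\|_{\mathbf{L}^2}$. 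This creates a circular dependence among $\|\bm{f}\|$, $\|\Sigma\bm{u}\|$ and $\|\mathbb{A}\bm{u}\|$ that must be broken by a smallness argument. In case (i) with $0<\alpha<1$, Young's inequality $|\Sigma\bm{u}|^{2\alpha}\leq\epsilon|\Sigma\bm{u}|^2+C_\epsilon$ lets me take $\epsilon$ arbitrarily small, so the $\|\Sigma\bm{u}\|^2$ contribution is absorbed automatically. In case (ii) with $\alpha=1$, the threshold \eqref{fcondNonlinBigSigma} on $\Lambda_2$ is precisely tuned so that the circular coefficient of $\|\mathbb{A}\bm{u}\|^2$ produced through the chain becomes strictly sub-unity relative to $\underline\gamma^2$, allowing its absorption into the leading term. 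In both cases Gronwall's lemma then gives a $\lambda$-independent bound in the $MR$ norm.

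Once the a priori estimate is secured, Schaefer's theorem (Theorem \ref{Schaefer}) applies essentially verbatim: $\mathscr{S}$ is bounded in $\tilde{E}$, the operator $\mathcal{T}$ maps this bounded set into a bounded subset of $MR$ by \eqref{EstLinNonautCauchy}, and this image is relatively compact in $\tilde{E}$ by \eqref{MRECptEmbedBigSigma}. Any resulting fixed point solves \eqref{QuasilinNonautonomousProbBigSigma} and lies in the regularity class \eqref{QuasilinNonautonomousBigSigmaSolnReg} thanks to Lemma \ref{ProductRuleTime} and the definition of $MR\cap\tilde{E}$, while \eqref{EstQuasilinNonautCauchyBigSigma} descends from the a priori bound by passing to the limit. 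I expect the genuinely hard part to be the sharp absorption in case (ii): the circular loop $\bm{f}\rightsquigarrow\Sigma\bm{u}\rightsquigarrow\mathbb{A}\bm{u}\rightsquigarrow\bm{u}'-\lambda\bm{f}$ closes only when the $\Lambda_2$-dependent coefficient is dominated by $\underline\gamma^2$ after the chain above, and verifying that the explicit threshold \eqref{fcondNonlinBigSigma} suffices is where the careful bookkeeping of constants in the energy identity (obtained from the test function $[\Pi_{\bm{u}}^*]^{-1}\bm{u}'$) must be done.
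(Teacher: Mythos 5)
Your overall architecture (work in the single Banach space $\tilde{E}=L^2(0,T;\mathbf{H}^\sigma(\Omega))$, drop the exhaustion $(\Omega_k)$, reuse the continuity-of-$\mathcal{T}$ argument, and conclude by Leray--Schauder via the compact embedding \eqref{MRECptEmbedBigSigma}) is exactly the paper's. The gap is in the a priori estimate, and it is concentrated precisely where you yourself flag the difficulty, namely case (ii). The paper does \emph{not} recover $\norm{\mathbb{A}\bm{u}}$ from the equation as $\underline{\gamma}\norm{\mathbb{A}\bm{u}}\leq\norm{\bm{u}'-\lambda\bm{f}}$; instead it derives a \emph{second} energy estimate by testing the equation with $\mathbb{A}\bm{u}$ itself. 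By Lemma \ref{ProductRuleTime} the term $\int_\Omega\bm{u}'\cdot\mathbb{A}\bm{u}$ is a total time derivative of $\tfrac12\int_\Omega\bm{u}\cdot\mathbb{A}\bm{u}$, hence contributes only initial data after integration in time, and the coercivity in \eqref{GammaNonDivCond} applied to the vector $\mathbb{A}\bm{u}$ gives $\underline{\gamma}\norm{\mathbb{A}\bm{u}}_{L^2(0,T;\mathbf{L}^2(\Omega))}^2$ on the left, bounded by $\tfrac{1}{\underline{\gamma}}\norm{\bm{f}}^2$ plus terms controlled via \eqref{L2HsnormIneq} and the first ($\bm{u}'$-tested) estimate. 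Feeding this through \eqref{OpRegBound} and \eqref{SobolevCompareNorm} yields
\begin{equation*}
\frac{c_S\underline{\gamma}}{C_\mathbb{A}}\norm{\bm{u}}_{L^2(0,T;\mathbf{H}^\sigma(\Omega))}^2\leq c_5\bigl(\norm{\bm{u}_0}_{\mathbf{H}^s_0(\Omega)}^2+\norm{F}_{L^2(0,T;L^2(\Omega))}^2\bigr)+\frac{\Lambda_2^2}{\underline{\gamma}}\norm{\bm{u}}_{L^2(0,T;\mathbf{H}^\sigma(\Omega))}^{2\alpha},
\end{equation*}
so for $\alpha=1$ the absorption condition is exactly $\Lambda_2^2/\underline{\gamma}<c_S\underline{\gamma}/C_\mathbb{A}$, i.e.\ exactly \eqref{fcondNonlinBigSigma}. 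No factor $\bar{\gamma}$ and no $\norm{\bm{u}'}$ enter this leading balance.

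Your chain cannot reach that threshold. Retrieving $\norm{\mathbb{A}\bm{u}}\leq\tfrac{1}{\underline{\gamma}}\norm{\bm{u}'-\lambda\bm{f}}$ forces you through the $H^1(0,T;\mathbf{L}^2)$ bound coming from the $[\Pi_{\bm{u}}^*]^{-1}\bm{u}'$ test, which carries the constant $\bar{\gamma}/\underline{\gamma}$ (see \eqref{EstNonlin2}): one gets $\norm{\bm{u}'}\leq\tfrac{\bar{\gamma}}{\underline{\gamma}}\norm{\bm{f}}+\dots$, hence $\norm{\mathbb{A}\bm{u}}\leq\tfrac{1}{\underline{\gamma}}\bigl(1+\tfrac{\bar{\gamma}}{\underline{\gamma}}\bigr)\norm{\bm{f}}+\dots$, and the coefficient multiplying $\norm{\bm{u}}_{L^2(0,T;\mathbf{H}^\sigma(\Omega))}$ after using \eqref{OpRegBound}, \eqref{SobolevCompareNorm} and the growth of $\bm{f}$ is $\kappa=\tfrac{C_\mathbb{A}\Lambda_2}{\sqrt{c_S}\,\underline{\gamma}}\bigl(1+\tfrac{\bar{\gamma}}{\underline{\gamma}}\bigr)$. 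Closing the loop therefore requires roughly $\Lambda_2<\sqrt{c_S}\,\underline{\gamma}^2/\bigl(C_\mathbb{A}(\underline{\gamma}+\bar{\gamma})\bigr)$, which is strictly stronger than \eqref{fcondNonlinBigSigma}; indeed at the stated threshold $\Lambda_2=\underline{\gamma}\sqrt{c_S/C_\mathbb{A}}$ your coefficient is $\kappa=\sqrt{C_\mathbb{A}}\bigl(1+\bar{\gamma}/\underline{\gamma}\bigr)\geq2\sqrt{C_\mathbb{A}}$, which is not sub-unity in general, so the assertion that \eqref{fcondNonlinBigSigma} is ``precisely tuned'' for your circular chain is false. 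Your argument does prove case (i) ($\alpha<1$, Young absorbs for any $\Lambda_2$) and proves case (ii) only under a more restrictive smallness condition involving $\bar{\gamma}$ and $C_\mathbb{A}$ to the first power; to obtain the theorem as stated you need the additional idea of testing with $\mathbb{A}\bm{u}$ and exploiting Lemma \ref{ProductRuleTime} so that the time-derivative term drops out of the bulk estimate.
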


\begin{proof}
Most of the proof follows the argument of Theorem \ref{NonlinNonautonomousProbThmSupercritical}, this time applying the Leray-Schauder fixed point theorem for the fixed point constructed in the Banach space $\tilde{E}$ in \eqref{MRECptEmbedBigSigma}, for $\sigma<\sigma'$, where $MR$ is compactly embedded. In particular, this means that we do not have to consider the sequence of sets $\Omega_k$, and we can directly consider the compact map $\mathcal{T}$ defined by $\bm{u}=:\mathcal{T}\bm{v}\in MR$ of the problem \begin{align*}& \bm{u}'(t)+\Pi_{\bm{v}}(t,\cdot)\mathbb{A}\bm{u}(t)=\bm{f}_{\bm{v}}(t,\cdot)\quad\text{ for a.e. }t\in]0,T[,\text{ and }\\&\bm{u}(0)=\bm{u}_0\nonumber\end{align*} 

A major modification lies in the proof that the Leray-Schauder set \begin{equation}\label{SchauderSetBigSigma}\mathscr{S}=\{\bm{u}\in \tilde{E}:\bm{u}=\lambda \mathcal{T}\bm{u}\text{ for some }\lambda\in[0,1]\}\end{equation} is bounded.  In particular, the proof of the a priori estimate in \eqref{EstNonlin2} needs to be modified for the case of $\sigma\geq s$.

Indeed, we obtain the bound on $\norm{\bm{u}}_{L^2(0,T;\mathbf{H}^\sigma_0(\Omega))}$ for $\sigma\geq s$ as follows: Multiplying the equation \eqref{ApproxEqOmegak} by $\mathbb{A}\bm{u}$ and integrating over $\Omega$, we obtain, by the bounds \eqref{GammaNonDivCond} and making use of Lemma \ref{ProductRuleTime} and the Cauchy-Schwarz inequality, for a.e. $t>0$, \begin{multline*}\frac{1}{2}\frac{d}{dt}  \int_\Omega \bm{u}\cdot\mathbb{A}\bm{u}+\underline{\gamma}\norm{\mathbb{A}\bm{u}}_{\mathbf{L}^2(\Omega)}^2\\\leq \int_\Omega  \bm{u}'\cdot \mathbb{A}\bm{u}+\int_\Omega \Pi_u\mathbb{A}\bm{u}\cdot\mathbb{A}\bm{u}=\int_{\Omega} \bm{f}\cdot \mathbb{A}\bm{u}\\\leq\frac{1}{2\underline{\gamma}}\norm{\bm{f}}_{\mathbf{L}^2(\Omega)}^2+\frac{\underline{\gamma}}{2}\norm{\mathbb{A}\bm{u}}_{\mathbf{L}^2(\mathbb{R}^n)}^2.\end{multline*}
Integrating over time on $]0,t[$ for any $t\leq T$, it follows by \eqref{LEllipBdd} that \begin{equation*}  a_*\norm{\bm{u}(t)}_{\mathbf{H}^s_0(\Omega)}^2+\underline{\gamma}\int_0^t\norm{\mathbb{A}\bm{u}}_{\mathbf{L}^2(\Omega)}^2\leq a^*\norm{\bm{u}_0}_{\mathbf{H}^s_0(\Omega)}^2+\mu\norm{\bm{u}(t)}_{\mathbf{L}^2(\Omega)}^2+\frac{1}{\underline{\gamma}}\int_0^t\norm{\bm{f}}_{\mathbf{L}^2(\Omega)}^2,\end{equation*} and so, taking the supremum over $t\in]0,T[$ and 
making use of \eqref{L2HsnormIneq}, we have \begin{align*}  a_*\norm{\bm{u}}_{L^\infty(0,T;\mathbf{H}^s_0(\Omega))}^2+\underline{\gamma}\norm{\mathbb{A}\bm{u}}_{L^2(0,T;\mathbf{L}^2(\Omega))}^2
&\leq a^*\norm{\bm{u}_0}_{\mathbf{H}^s_0(\Omega)}^2+\mu\norm{\bm{u}_0}_{\mathbf{L}^2(\Omega)}^2+2\mu^2\bar{\gamma}\norm{\bm{u}}_{L^2(0,T;\mathbf{L}^2(\Omega))}^2\\&\quad+\frac{1}{2\bar{\gamma}}\norm{ \bm{u}'}_{L^2(0,T;\mathbf{L}^2(\Omega))}^2+\frac{1}{\underline{\gamma}}\norm{\bm{f}}_{L^2(0,T;\mathbf{L}^2(\Omega))}^2.\end{align*}  
Considering only the term $\norm{\mathbb{A}\bm{u}}_{L^2(0,T;\mathbf{L}^2(\Omega))}^2$ on the left-hand-side of the inequality and applying Assumptions \eqref{OpRegBound} and \eqref{fcondNonlin} then gives \begin{align*}  \frac{\underline{\gamma}}{C_\mathbb{A}}\norm{\bm{u}}_{L^2(0,T;\mathbf{H}^{\sigma'}(\Omega))}^2&\leq\frac{\mu'}{C_\mathbb{A}}\norm{\bm{u}}_{L^2(0,T;\mathbf{L}^2(\Omega))}^2 + a^*\norm{\bm{u}_0}_{\mathbf{H}^s_0(\Omega)}^2+\mu\norm{\bm{u}_0}_{\mathbf{L}^2(\Omega)}^2\\&\quad+2\mu^2\bar{\gamma}\norm{\bm{u}}_{L^2(0,T;\mathbf{L}^2(\Omega))}^2+\frac{1}{2\bar{\gamma}}\norm{ \bm{u}'}_{L^2(0,T;\mathbf{L}^2(\Omega))}^2+\frac{1}{\underline{\gamma}}\norm{\bm{f}}_{L^2(0,T;\mathbf{L}^2(\Omega))}^2\\&\leq a^*\norm{\bm{u}_0}_{\mathbf{H}^s_0(\Omega)}^2+\mu\norm{\bm{u}_0}_{\mathbf{L}^2(\Omega)}^2+\left(\frac{\mu'}{C_\mathbb{A}}+2\mu^2\bar{\gamma}+\frac{\Lambda_1^2}{\underline{\gamma}}\right)\norm{\bm{u}}_{L^2(0,T;\mathbf{L}^2(\Omega))}^2\\&\quad+\frac{1}{2\bar{\gamma}}\norm{ \bm{u}'}_{L^2(0,T;\mathbf{L}^2(\Omega))}^2+\frac{1}{\underline{\gamma}}\norm{F}_{L^2(0,T;L^2(\Omega))}^2+\frac{\Lambda_2^2}{\underline{\gamma}}\int_0^T\norm{D^\sigma\bm{u}}_{\mathbf{L}^2(\mathbb{R}^n)}^{2\alpha}.\end{align*} Next, we argue as in estimate \eqref{EstNonlin2} to control the $H^1(0,T;\mathbf{L}^2(\Omega))$-norm of $\bm{u}$, and there exists a constant $c_5=c_5(\underline{\gamma},\bar{\gamma},a_*,a^*,\mu,\Lambda_1,\Lambda_2,T)>0$ such that \begin{equation*}  \frac{\underline{\gamma}}{C_\mathbb{A}}\norm{\bm{u}}_{L^2(0,T;\mathbf{H}^{\sigma'}(\Omega))}^2\leq c_5\left(\norm{\bm{u}_0}_{\mathbf{H}^s_0(\Omega)}^2+\norm{F}_{L^2(0,T;L^2(\Omega))}^2\right)+\frac{\Lambda_2^2}{\underline{\gamma}}\int_0^T\norm{D^\sigma\bm{u}}_{\mathbf{L}^2(\mathbb{R}^n)}^{2\alpha}.\end{equation*} 
Therefore, by \eqref{SobolevCompareNorm}, 
\begin{equation*}  \frac{c_S\underline{\gamma}}{C_\mathbb{A}}\norm{\bm{u}}_{L^2(0,T;\mathbf{H}^{\sigma}(\Omega))}^2\leq c_5\left(\norm{\bm{u}_0}_{\mathbf{H}^s_0(\Omega)}^2+\norm{F}_{L^2(0,T;L^2(\Omega))}^2\right)+\frac{\Lambda_2^2}{\underline{\gamma}}\norm{\bm{u}}_{L^2(0,T;\mathbf{H}^\sigma(\Omega))}^{2\alpha}.\end{equation*}
By the Assumption \eqref{fcondNonlinBigSigmaSublinGrowth} with either Condition (i) with $\alpha<1$ or Condition (ii) with $\alpha=1$ and $\Lambda_2<\underline{\gamma}\sqrt{\frac{c_S}{C_\mathbb{A}}}$, we obtain an a priori bound on the term $\norm{\bm{u}}_{L^2(0,T;\mathbf{H}^\sigma(\Omega))}$.

Also, as in the estimate \eqref{EstNonlin2}, the Leray-Schauder set $\mathscr{S}$ given by \eqref{SchauderSetBigSigma} is bounded in $MR$. Making use of the Aubin-Lions compactness lemma $MR\hookrightarrow\tilde{E}$, by the Leray-Schauder principle, $\mathcal{T}$ has a fixed point $\bm{u}$ satisfying \eqref{QuasilinNonautonomousBigSigmaSolnReg} and \eqref{EstQuasilinNonautCauchyBigSigma} solving the problem \eqref{QuasilinNonautonomousProbBigSigma}.
\end{proof}

\begin{remark}
The theorem holds with $\sigma'=2$ for $\mathbb{A}=\mathbb{L}$ as well as with $\sigma'=\min\{2s,s+\frac12\}$ for $\mathbb{A}=(-\Delta)^s_m$ for $0<s<1$, with derivatives of order $s<\sigma<\sigma'$, which may possibly be of order greater than 1. However, while derivatives of order less than 1 may take the form of $D^s$ or $\mathcal{D}^s$ as defined by \eqref{FracDervs} and \eqref{NonlocalDervs} respectively, the derivatives of order greater than 1 is only defined by \eqref{HigherOrderFracDervs}, and $\mathcal{D}^s\bm{u}$ is not defined for $s>1$. The derivatives of order equal to 1 is just the classical gradient.

\end{remark}

\begin{remark}
As in Remark \ref{InhomogDirichletBdryCond}, the results in Theorem \ref{NonlinNonautonomousProbThmBigSigma} can also be extended to the inhomogeneous Dirichlet boundary problem $\bm{u}=\bm{g}$ in $]0,T[\times\Omega^c$, for $\bm{g}\in MR(\mathbb{R}^n)\cap L^2(0,T;\mathbf{H}^{s+\theta}(\mathbb{R}^n))\cap C([0,T];\mathbf{H}^s(\mathbb{R}^n))$ such that $\bm{g}(0)\in \mathbf{H}^s(\mathbb{R}^n)$.
\end{remark}

As a result, we can consider quasilinear diffusion equations and systems with derivatives of order $\sigma>s$ such that $\sigma$ may be greater than 1, generalising the results of \cite{ArendtChill}, \cite{ArendtDierSystems} and \cite{LaasriMugnolo2020MMAS}. This provides many useful applications, particularly in advection-diffusion systems, as seen in Section \ref{subsect:MRQuasiLinearAppSQG}.

\subsection{Examples}

\subsubsection{A System with the Classical Laplacian with $D^\sigma$-quasilinearity $1<\sigma<2$}

As a first application, we take a relook at the vectorial classical Laplacian $(-\Delta)_m$ in Section \ref{subsect:MRQuasiLinearAppClassical}, this time, with the $D^\sigma$ fractional derivatives for any $1<\sigma<2$ as defined in \eqref{HigherOrderFracDervs}.

\begin{corollary}
Suppose $\Pi$ and $\bm{f}$ satisfy the assumptions of Theorem \ref{NonlinNonautonomousProbThmBigSigma} with $\bm{f}$ fulfilling either Conditions (i) or (ii) of \eqref{fcondNonlinBigSigmaSublinGrowth}. Then, for $1<\sigma<2$ and every $\bm{u}_0\in\mathbf{H}^1_0(\Omega)\cap\mathbf{H}^\sigma(\Omega)$, the nonlinear problem given by 
\begin{equation*}\bm{u}'(t)+\Pi(t,x,\bm{u},D^\sigma \bm{u})(-\Delta)_m\bm{u}(t)=\bm{f}(t,x,\bm{u},D^\sigma \bm{u})\quad\text{ for a.e. }t\in]0,T[,\end{equation*} has a solution $\bm{u}\in H^1(0,T;\mathbf{L}^2(\Omega))\cap L^2(0,T;\mathbf{H}^2(\Omega))\cap C([0,T];\mathbf{H}^1_0(\Omega))$ satisfying \eqref{EstQuasilinNonautCauchyBigSigma} for any $T\in]0,\infty[$. 

\end{corollary}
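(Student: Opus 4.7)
The plan is to obtain this corollary as an immediate application of Theorem \ref{NonlinNonautonomousProbThmBigSigma} with $\mathbb{A}=(-\Delta)_m$ and $s=1$. First, I would verify the structural hypotheses on $\mathbb{A}$: the vectorial Laplacian fits \eqref{ClassicalDeriv} with the block-diagonal tensor $A^{\alpha\beta}_{ij}=c_i\,\delta^{\alpha\beta}\delta_{ij}$, which trivially satisfies the Legendre--Hadamard condition \eqref{LegendreHadamard}. Coercivity on $\mathbf{H}^1_0(\Omega)$ with $\mu=0$ follows from the Poincar\'e inequality (Lemma \ref{Poincare}), and the boundedness $\mathbb{A}:\mathbf{H}^1_0(\Omega)\to\mathbf{H}^{-1}(\Omega)$ is immediate from integration by parts.

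Next, I would invoke the global elliptic regularity of Proposition \ref{ClassicalEllipRegThmVectorial} (applied under the $C^{1,1}$-boundary assumption implicit in the desired $\mathbf{H}^2(\Omega)$-conclusion, exactly as in the corollary of Section \ref{subsect:MRQuasiLinearAppClassical}) to identify $\mathbf{L}^2_{(-\Delta)_m}=\mathbf{H}^2(\Omega)\cap\mathbf{H}^1_0(\Omega)$, together with an estimate
$$\norm{\bm{u}}_{\mathbf{H}^2(\Omega)}\leq C_{\mathbb{A}}\norm{(-\Delta)_m\bm{u}}_{\mathbf{L}^2(\Omega)},$$
which is precisely \eqref{OpRegBound} with $\sigma'=2$ and $\mu'=0$. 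For fixed $1<\sigma<2$ one then picks an intermediate index with $\sigma<\sigma'\leq 2$. Since $D^\sigma$ is defined through \eqref{HigherOrderFracDervs} as $D^{\sigma-1}\partial$, it is a continuous map $\mathbf{H}^{\sigma'}(\mathbb{R}^n)\to\mathbf{L}^2(\mathbb{R}^n)$ after extension by zero, so both the compact embedding \eqref{MRECptEmbedBigSigma} via Aubin--Lions and the Sobolev comparison \eqref{SobolevCompareNorm} apply. Invoking Theorem \ref{NonlinNonautonomousProbThmBigSigma} directly yields a solution in
$$H^1(0,T;\mathbf{L}^2(\Omega))\cap L^2(0,T;\mathbf{L}^2_{(-\Delta)_m})\cap L^2(0,T;\mathbf{H}^\sigma(\Omega))\cap C([0,T];\mathbf{H}^1_0(\Omega))$$
fulfilling the estimate \eqref{EstQuasilinNonautCauchyBigSigma}; the identification of $\mathbf{L}^2_{(-\Delta)_m}$ from the previous step collapses the first two factors into $L^2(0,T;\mathbf{H}^2(\Omega))$.

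The argument is essentially a bookkeeping exercise, so the only genuine subtlety is in the limiting case $\sigma'=2=2s$. One must check that the proof of Theorem \ref{NonlinNonautonomousProbThmBigSigma} goes through for $\sigma<\sigma'=2$, which it does: Aubin--Lions compactness in \eqref{MRECptEmbedBigSigma} requires only the strict inequality $\sigma<\sigma'$, and the Sobolev comparison \eqref{SobolevCompareNorm} between $\mathbf{H}^\sigma$ and $\mathbf{H}^{\sigma'}$ norms is unaffected by the endpoint. The smallness constraint on $\Lambda_2$ in case (ii) of \eqref{fcondNonlinBigSigmaSublinGrowth} is taken as part of the hypothesis, so no further verification is required; the sublinear case (i) requires no constraint at all.
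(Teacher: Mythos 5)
Your proposal is correct and is essentially the paper's intended argument: the corollary is a direct application of Theorem \ref{NonlinNonautonomousProbThmBigSigma} with $\mathbb{A}=(-\Delta)_m$, $s=1$, using the elliptic regularity of Proposition \ref{ClassicalEllipRegThmVectorial} to identify $\mathbf{L}^2_{(-\Delta)_m}$ with $\mathbf{H}^2(\Omega)\cap\mathbf{H}^1_0(\Omega)$ and hence \eqref{OpRegBound} with $\sigma'=2$ (the endpoint the paper itself admits for $\mathbb{A}=\mathbb{L}$ in the remark following the theorem, and which your intermediate choice $\sigma<\sigma'<2$ sidesteps in any case). Your verification of coercivity, boundedness, and the continuity of $D^\sigma=D^{\sigma-1}\partial$ into $\mathbf{L}^2$ matches the paper's bookkeeping, so nothing further is needed.
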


In particular, this further extends the results of \cite{ArendtChill} to include the fractional derivatives $D^\sigma$ of order $1<\sigma<2$.

\subsubsection{Anisotropic Advection-Diffusion Fractional Equations for $s>\frac12$}\label{subsect:MRQuasiLinearAppSQG}
Our last application is a semilinear anisotropic advection-diffusion system of equations. Such a system may be useful to transport models with fractional diffusion and is inspired, in the scalar case, by the 2-dimensional forced subcritical surface quasi-geostrophic flows with nonlocal dissipation (see, for instance,   \cite{ConstantinNguyen2018PhysDGlobalSQGBoundedDomains}) and the 2-dimensional Navier-Stokes equation.

Suppose $s>\frac{1}{2}$. Let $\bm{v}(t,x)$ be a bounded velocity field in $]0,T[\times\Omega$ in a bounded $\Omega\subset\mathbb{R}^n$ such that \begin{equation}\label{MRSQGBdd}\norm{\bm{v}}_{L^\infty(]0,T[\times\Omega)}\leq C_\#<\infty,\quad C_\#\text{ depending on }\Omega,\underline{\gamma},s \text{ and }\mathbb{A}\text{ as in \eqref{fcondNonlinBigSigma}}.\end{equation}  For $\bm{f}\in L^2(0,T;\mathbf{L}^2(\Omega))$ and $\bm{u}_0\in \mathbf{H}^s_0(\Omega)\cap\mathbf{H}^1(\Omega)$, the equation is given by \[\bm{u}'(t,x)+\Pi\mathbb{A}\bm{u}(t,x)=-\sum_{\alpha=1}^n v^\alpha(t,x)\partial_\alpha \bm{u}(t,x)+\bm{f}(t,x,\bm{u}),\quad (t,x)\in]0,T[\times\Omega\]\[\bm{u}(t,x)=\bm{0},\quad (t,x)\in]0,T[\times\Omega^c,\]\[\bm{u}(0,x)=\bm{u}_0(x),\quad x\in\Omega,\] where $\mathbb{A}=(-\Delta)^s_m$ or $\mathbb{L}$. 
Observe that this means that since $\frac12<s<1$, we have a convective term given by the classical gradient of $\bm{u}$. 

Since $\bm{v}$ is bounded as in \eqref{MRSQGBdd}, we can apply Theorem \ref{NonlinNonautonomousProbThmBigSigma} with $\sigma=1$ and with the source function given by the term $-\sum_\alpha v^\alpha(t,x)\partial_\alpha \bm{u}(t,x)+\bm{f}(t,x,\bm{u})$, such that \eqref{fcondNonlinBigSigmaSublinGrowth} is satisfied with $\alpha=1$. As a result, the problem admits a global solution \[\bm{u}\in H^1(0,T;\mathbf{L}^2(\Omega))\cap L^2(0,T;\mathbf{H}^1(\Omega))\cap C([0,T];\mathbf{H}^s_0(\Omega))\] for $0<s\leq1$ with $1=\sigma<\sigma'<2s=2$ for $\mathbb{A}=\mathbb{L}$ and $1=\sigma<\sigma'<\min\{2s,s+\frac12\}$ for $\mathbb{A}=(-\Delta)^s_m$. 

Furthermore, we do not require that $\bm{v}$ is divergence-free, which means that our result applies to compressible fluids as well. Such a result is new, as far as we know, since $\bm{v}$ is different from those considered in other works such as \cite{WangWuLiChen2014DCDSCompressibleSQG} and \cite{Debbi2020StochasticSQG}. However, by  \eqref{MRSQGBdd}, $\bm{v}$ must be bounded, which is a severe restriction, and therefore, in general, it may not cover the subcritical quasi-geostrophic model where $\bm{v}$ is given by the vorticity function of the Riesz transform of $\bm{u}$.

Moreover, limited by the elliptic regularity of $(-\Delta)^s$ in Proposition \ref{BNochettoFracLapReg}, we are only able to consider the subcritical $s>\frac{1}{2}$ case, and unable to obtain the critical $s=\frac{1}{2}$ nor the supercritical $s<\frac{1}{2}$ cases.
\\

\appendix
\appendix

\noindent \textbf{Acknowledgements.} 
C. Lo acknowledges the FCT PhD fellowship in the framework of the LisMath doctoral programme at the University of Lisbon. The research of J. F. Rodrigues was partially done under the framework of the Project PTDC/MATPUR/28686/2017 at CMAFcIO/ULisboa.

\printbibliography

\end{document}